\newtheorem{theorem}{Theorem}[subsection]
\newtheorem{lemma}[theorem]{Lemma}
\newtheorem{corollary}[theorem]{Corollary}
\newtheorem{proposition}[theorem]{Proposition}
\theoremstyle{definition}
\newtheorem{definition}[theorem]{Definition}
\newtheorem{remark}[theorem]{Remark}
\newtheorem{example}[theorem]{Example}
\newtheorem{question}[theorem]{Question}
\numberwithin{equation}{section}
\numberwithin{theorem}{section}
\newcommand{\cM}{{\cal M}}
\newcommand{\ep}{{\varepsilon}}
\newcommand{\cN}{{\cal N}} 
\newcommand{\cA}{{\cal A}}
\newcommand{\cU}{{\cal U}}
\newcommand{\cP}{{\cal P}}
\newcommand{\cB}{{\cal B}}
\newcommand{\cD}{{\cal D}}
\newcommand{\cE}{{\cal E}}
\newcommand{\cO}{{\cal O}}
\newcommand{\cZ}{{\mathcal Z}}
\newcommand{\C}{{\mathbb C}}
\newcommand{\Z}{{\mathbb Z}}
\newcommand{\N}{{\mathbb N}}
\newcommand{\R}{{\mathbb R}}
\newcommand{\Cs}{{$C^*$-al\-ge\-bra}}
\newcommand{\sh}{{$^*$-ho\-mo\-mor\-phism}}
\date{}
\title{Irreducible inclusions of simple \Cs s}
\author{Mikael R\o rdam}
\begin{document}

\maketitle

\begin{center}
\emph{Dedicated to the memory of Vaughan Jones}
\end{center}

\begin{abstract} 
\noindent The literature contains interesting examples of inclusions of simple \Cs s with the property that all intermediate \Cs s likewise are simple. In this article we take up a systematic study of such inclusions, which we refer to as being \emph{$C^*$-irreducible} by the analogy that all intermediate von Neumann algebras of an inclusion of factors are again factors precisely when the given inclusion is irreducible. 

We give an intrinsic characterization of when an inclusion of \Cs s is $C^*$-irreducible, and use this to revisit known and exhibit new $C^*$-irreducible inclusions arising from groups and dynamical systems. Using a theorem of Popa one can show that an inclusion of II$_1$-factors is $C^*$-irreducible if and only if it is irreducible with finite Jones index. We further show how one can construct $C^*$-irreducible inclusions from inductive limits, and we 
discuss how the notion of $C^*$-irreducibility behaves under tensor products.
\end{abstract}

\section{Introduction}

\noindent Jones' index of subfactors, \cite{Jones:index}, and the subsequent classification of subfactors of finite depth, are famous examples of the rich mathematical structure possessed by  inclusions of operator algebras. The classification of hyperfinite von Neumann factors was an inspiration for the recently (almost) completed classification of simple nuclear \Cs s (the Elliott program), and the Jones' theory of subfactors has likewise been a model for understanding inclusions of \Cs s, see for example \cite{Izumi:Crelle2002}.

Inclusions of \Cs s are ubiquitous in operator algebras, and are perhaps most often encountered in $C^*$-dynamics. If a group $\Gamma$ acts on a \Cs{} $\cA$, then both $\cA$ and the  group \Cs{} of $\Gamma$ are sub-algebras of the crossed product of $\cA$ with $\Gamma$; and the fixed-point algebra $\cA^\Gamma$ is a subalgebra of $\cA$. An inclusion of groups $\Lambda \subseteq \Gamma$ gives rise to an inclusion of group \Cs s and von Neumann algebras, and, whenever $\Gamma$ acts on a given \Cs, also of their crossed products. 

Given an inclusion $\cB \subseteq \cA$ of \Cs s, it is natural to consider the lattice of intermediate \Cs s. In some special cases of interest one can even classify these, or one can show that each intermediate algebra shares properties with $\cA$ and $\cB$. Perhaps the best understood inclusions are those arising from crossed products:  $\cB \subseteq \cB \rtimes_{\mathrm{red}} \Gamma$, where  $\cB$ is simple and the action $\Gamma \curvearrowright \cB$ is outer.  It is an easy consequence of classical results of Kishimoto, Olesen--Pedersen from ca.\ 1980 that any intermediate \Cs{} of these  inclusions are simple, making the inclusions $C^*$-simple,  see Section~\ref{sec:dynamics}. Moreover, there is a Galois type correspondance between intermediate \Cs s  of such an inclusion and subgroups of $\Gamma$ as shown by Izumi, \cite{Izumi:Crelle2002}, and
 Cameron an Smith,  \cite{CamSmith:Galois}. A similar Galois type correspondance was established in \cite{BEFPR-PAMS21} of intermediate \Cs s of the inclusion of a groupoid \Cs{} and its canonical Cartan subalgebra in terms of sub-groupoids.

Amrutam and Kalantar show in \cite{Amrutam-Kalantar-ETDS2020} that the inclusion $C^*_\lambda(\Gamma) \subseteq \cA \rtimes_{\mathrm{red}} \Gamma$ is $C^*$-irreducible under certain ``mixing'' conditions on the action, and Amrutam and Ursu proved a Galois correspondance for the intermediate \Cs s of these inclusions under some further conditions.  In Theorem~\ref{thm:AK} we improve the first mentioned result, giving a necessary and sufficient condition for the inclusion $C^*_\lambda(\Gamma) \subseteq \cA \rtimes_{\mathrm{red}} \Gamma$ to be $C^*$-irreducible.

 Izumi--Longo--Popa established in \cite{ILP:JFA1998} a Galois correspondance between intermediate von Neumann algebras of the inclusion $\mathcal{M}^G \subseteq \mathcal{M}$  and subgroups of $G$,
where $\mathcal{M}$ is a factor and $G$ is a compact group equipped with a minimal action on $\mathcal{M}$. Izumi established in \cite{Izumi:Crelle2002} a similar Galois correspondance for the inclusion $\cA^\Gamma \subseteq \cA$ arising from an outer action of a finite group $\Gamma$ on a unital simple \Cs{} $\cA$. In particular, the inclusion $\cA^\Gamma \subseteq \cA$ is $C^*$-irreducible.  

Another Galois correspondance was established by Suzuki, \cite{Suzuki:intermediate}, who proved that any intermediate \Cs{} of an inclusion $C(Y) \rtimes \Gamma \subseteq C(X) \rtimes \Gamma$ is of the form $C(Z) \rtimes \Gamma$ (under suitable conditions explained in Section~\ref{sec:background} below). 

The purpose of this paper is to explain when an inclusion of \Cs s is $C^*$-irreducible, i.e.,  has the property that all intermediate \Cs s are simple, and to provide (new) examples of such inclusions. We expect that the lattice of intermediate \Cs s of such inclusions will exhibit a more rigid structure allowing for a better understanding; and, indeed as described above, complete classifications of the lattice of intermediate \Cs s of $C^*$-irreducible inclusions have been obtained in several cases of interest. In 
Chapter~\ref{sec:irr-inclusion} we give an intrinsic characterization of when an inclusion of simple \Cs s is $C^*$-irreducible. The condition states that an inclusion $\cB \subseteq \cA$ of unital \Cs s is $C^*$-irreducible if and only if each non-zero positive element in $\cA$ is full relatively to $\cB$ 
(as defined in Chapter~\ref{sec:irr-inclusion}). This condition is clearly sufficient for $C^*$-irreducibility, and we prove it is also necessary.

Being $C^*$-irreducible is a stronger property than the usual notion of irreducibility of an inclusion, namely that the relative commutant is trivial. In Section~\ref{sec:vNalg} we consider when an inclusion of von Neumann factors is $C^*$-irreducible. By a theorem of Popa, this happens for an inclusion of II$_1$-factors if and only if the inclusion is irreducible (in the usual sense) and of finite Jones' index. In Section~\ref{sec:dynamics} we consider when inclusions arising from groups and dynamical systems are $C^*$-irreducible, and we revisit here some of the results mentioned above.

We show in Section~\ref{sec:indlimit} how to construct $C^*$-irreducible inclusions from inductive limits. In particular we give examples of $C^*$-irreducible inclusions of UHF-algebras. The construction involves the relative position of finite dimensional sub-\Cs s inside another finite dimensional \Cs, and leads to the notion that we call \emph{everywhere non-orthogonal} subalgebras. 

Finally, in Section~\ref{sec:tensor}, we show how $C^*$-irreducible inclusions behave under taking tensor products. We begin in Section~\ref{sec:background} by explaining in more detail some key examples that motivated this paper.

I thank  Pierre de la Harpe, Adrian Ioana, Mehrdad Kalantar, Tron Omland, Sorin Popa and Yuhei Suzuki for their very helpful input to this paper.

\section{Inclusions of \Cs s and their intermediate \Cs s} \label{sec:background}

\noindent This section contains a review of some results on inclusions of \Cs s that prompted me to formalize the idea of a $C^*$-irreducible inclusion and to write this paper. We focus in particular on results that provide an analog of the Galois correspondance stating that all intermediate \Cs s of a given inclusion share common properties or arise in a certain way. One famous example of this situation is found in the  tensor splitting theorem below by Ge and Kadison on intermediate von Neumann factors of an inclusion of tensor products, as well as a $C^*$-analog of this result.

\begin{theorem}[Ge--Kadison, \cite{Ge-Kadison;Invent1996}] Let $\cM$ be a von Neumann factor, let $\cN$ be an arbitrary von Neumann algebra, and let $\mathcal{P}$ be a von Neumann algebra such that $\cM \overline{\otimes} \C \subseteq \mathcal{P} \subseteq \cM \overline{\otimes} \cN$. Then $\mathcal{P} = \cM \overline{\otimes} \mathcal{N}_0$ for some von Neumann subalgebra $\mathcal{N}_0$ of $\cN$. 
\end{theorem}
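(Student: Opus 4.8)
The plan is to identify the candidate subalgebra explicitly and then establish the non-trivial inclusion by slicing. Set
$\mathcal{N}_0 = \{\, y \in \cN : 1 \otimes y \in \mathcal{P}\,\}$, where I identify $\C \overline{\otimes} \cN$ with $1 \otimes \cN \cong \cN$. Since $\mathcal{P}$ is a von Neumann algebra and $y \mapsto 1 \otimes y$ is a weak-$*$ homeomorphism onto $1 \otimes \cN$, the set $\mathcal{N}_0 = \mathcal{P} \cap (1 \otimes \cN)$ is a von Neumann subalgebra of $\cN$ (it is $*$-closed, multiplicatively closed, contains the unit, and is weak-$*$ closed). As $\cM \otimes 1 \subseteq \mathcal{P}$ and $1 \otimes \mathcal{N}_0 \subseteq \mathcal{P}$ by construction, the von Neumann algebra they generate satisfies $\cM \overline{\otimes} \mathcal{N}_0 \subseteq \mathcal{P}$. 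The entire content of the theorem is therefore the reverse inclusion $\mathcal{P} \subseteq \cM \overline{\otimes} \mathcal{N}_0$.

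For the reverse inclusion I would pass to slice maps. By Tomita's commutation (Fubini) theorem for von Neumann tensor products, an element $x \in \cM \overline{\otimes} \cN$ lies in $\cM \overline{\otimes} \mathcal{N}_0$ if and only if its left slices satisfy $(\omega \otimes \Id)(x) \in \mathcal{N}_0$ for every $\omega \in \cM_*$; the complementary condition $(\Id \otimes \psi)(x) \in \cM$ is automatic since $x \in \cM \overline{\otimes} \cN$. Thus it suffices to show, for each $x \in \mathcal{P}$ and each normal state $\omega$ on $\cM$, that $1 \otimes (\omega \otimes \Id)(x) \in \mathcal{P}$, i.e.\ that each left slice lands in $\mathcal{N}_0$. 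Writing $\omega$ as a countable convex combination of vector states, one reduces to $\omega = \omega_\xi$.

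The factor hypothesis enters decisively here. Because $\cM \otimes 1 \subseteq \mathcal{P}$, the algebra $\mathcal{P}$ is invariant under the inner automorphisms $\Ad(u \otimes 1)$ for $u \in U(\cM)$, and being weak-$*$ closed and convex it contains the weak-$*$ closed convex hull of every orbit $\{(u \otimes 1)x(u^* \otimes 1) : u \in U(\cM)\}$. Since $\cM$ is a factor, the fixed-point algebra of this action inside $\cM \overline{\otimes} \cN$ is exactly the relative commutant $(\cM \otimes 1)' \cap (\cM \overline{\otimes} \cN) = Z(\cM) \overline{\otimes} \cN = 1 \otimes \cN$, so any element of such a hull lying in $1 \otimes \cN$ automatically produces a member of $\mathcal{N}_0$. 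Running a Dixmier-type averaging relative to the factor $\cM$, together with the equivariance $(\omega_\xi \otimes \Id)\big((u \otimes 1)x(u^*\otimes 1)\big) = (\omega_{u^*\xi} \otimes \Id)(x)$, which lets the unitary group of $\cM$ transport one slice to all others, one upgrades this to $1 \otimes (\omega_\xi \otimes \Id)(x) \in \mathcal{P}$ for every vector state, hence for every normal state. Feeding this back into the Fubini characterization gives $x \in \cM \overline{\otimes} \mathcal{N}_0$.

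The main obstacle is precisely this averaging step: when $\cM$ is finite the relative Dixmier property is clean (one averages against the unique trace and the hull meets $1 \otimes \cN$ in a single point, the slice itself), but for a general, possibly type III, factor there is no trace and the intersection of the hull with $1 \otimes \cN$ need not be a single point, so one must argue more carefully that the correct slice is recovered rather than some averaged substitute. A convenient way to sidestep part of this is to dualize: taking commutants turns the hypothesis into $\cM' \otimes 1 \subseteq \mathcal{P}' \subseteq \cM' \overline{\otimes} B(H_2)$ with $\cM'$ again a factor, reducing the problem to the case $\cN = B(H_2)$, where a fixed system of matrix units for $B(H_2)$ makes the slices $x_{ij} = (\Id \otimes \varphi_{ij})(x) \in \cM'$ concrete and the factoriality of $\cM'$ can be used to control them; one then recovers the general statement via $\mathcal{P} = (\cM' \overline{\otimes} \mathcal{S})' = \cM \overline{\otimes} \mathcal{S}'$ and checks $\mathcal{S}' \subseteq \cN$ by slicing.
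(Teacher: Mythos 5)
First, a remark on the comparison itself: the paper does not prove this statement. It is quoted as background, with attribution, from \cite{Ge-Kadison;Invent1996}, so your proposal can only be assessed on its own merits, not against an argument in the paper. Much of your framework is sound. Taking $\mathcal{N}_0 = \mathcal{P} \cap (1 \otimes \cN)$ and observing $\cM \overline{\otimes} \mathcal{N}_0 \subseteq \mathcal{P}$ is the right start; the slice-map characterization $\cM \overline{\otimes} \mathcal{N}_0 = \{x \in \cM \overline{\otimes} \cN : (\omega \otimes \Id)(x) \in \mathcal{N}_0 \text{ for all } \omega \in \cM_*\}$ is indeed valid for arbitrary von Neumann subalgebras (in contrast with the $C^*$-setting, where property (S) is needed), since such $x$ automatically commutes with $\cM' \otimes 1$ and the slice condition is equivalent to commuting with $1 \otimes \mathcal{N}_0'$, by Tomita's commutation theorem; and your commutant trick is a correct reduction: if splitting is known when the ambient algebra is $\cM \overline{\otimes} B(H_2)$, applying it to $\mathcal{P}'$, taking commutants, and slicing does recover the general statement.

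The genuine gap is that the one step carrying the entire weight of the theorem --- showing that every slice $(\omega \otimes \Id)(x)$, $x \in \mathcal{P}$, lies in $\mathcal{N}_0$ --- is never established. You invoke a ``Dixmier-type averaging relative to the factor $\cM$'', but two things are missing. First, the relative Dixmier-type statement you need, namely that the weak$^*$-closed convex hull of $\{(u \otimes 1)x(u^* \otimes 1) : u \in \cU(\cM)\}$ meets $1 \otimes \cN$, is itself nontrivial for the (typically infinite-index) inclusion $\cM \otimes 1 \subseteq \cM \overline{\otimes} \cN$: the classical Dixmier theorem averages into the center of a single algebra, and while simultaneous averaging handles elements of the algebraic tensor product, for a general $x$ even the existence of such a point requires an argument. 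Second, and more seriously, even granting it, the averaging produces \emph{some} element of $\mathcal{P} \cap (1\otimes\cN)$; by your own equivariance identity this element is a weak$^*$ limit of convex combinations of slices $(\omega_{u^*\xi} \otimes \Id)(x)$ over \emph{varying} unitaries $u$, not the particular slice $(\omega_\xi \otimes \Id)(x)$ you must capture. This loop closes only when $\cM$ has a trace: there the trace-preserving average is the unique Dixmier point, and since $\cM \otimes 1 \subseteq \mathcal{P}$ one may replace $x$ by $(a\otimes 1)x(b\otimes 1)$ and use density of the functionals $\tau(a\,\cdot\,b)$ in $\cM_*$ to get all slices. You acknowledge exactly this obstacle, but your escape route does not resolve it: the case $\cN = B(H_2)$ is self-dual under commutants (the inclusion $\cM' \otimes 1 \subseteq \mathcal{P}' \subseteq \cM' \overline{\otimes} B(H_2)$ has precisely the same shape), so nothing has become easier, and the sentence asserting that factoriality ``can be used to control'' the matrix-entry slices $x_{ij} \in \cM'$ is exactly the theorem that remains to be proved. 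In short: a correct skeleton, with the heart of Ge--Kadison's argument --- the passage from invariance under $\cM \otimes 1$ to normal slices landing back in $\mathcal{P}$, valid for type III factors --- missing.
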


\noindent In the theorem above $\overline{\otimes}$ denotes the (spatial) von Neumann algebra tensor product. We let $\otimes$ denote the minimal $C^*$-tensor product.

Let $\cA$ and $\cB$ be a unital \Cs s. For each $\varphi \in \cA^*$  there exists a bounded linear map $R_\varphi \colon \cA \otimes \cB \to \cB$, called  a \emph{slice map}, satisfying $R_\varphi(a \otimes b) = \varphi(a) b$, for $a \in \cA$ and $b \in \cB$. The \Cs{} $\cA$ is said to have \emph{Wassermann's property (S)}  if for each unital inclusion $\cB_0 \subseteq \cB$ of \Cs s and for each $x \in \cA \otimes \cB$, one has $x \in \cA \otimes \cB_0$ if and only if $R_\varphi(x) \in \cB_0$, for all $\varphi \in \cA^*$, cf.\ \cite{Wass:FubiniI}. Wassermann proved in \cite{Wass:FubiniII} that all nuclear \Cs s have property (S). 

The result below was obtained independently by Zsido, \cite{Zsido:PAMS1999}, and Zacharias, \cite{Zach:PAMS2001}.

\begin{theorem}[Zacharias, Zsido] \label{thm:tensorI} Let $\cE$ be a unital \Cs. Then $\cE$ is simple and has property (S) of Wassermann if and only if for each unital \Cs{} $\cB$ and each intermediate \Cs{} $\cE \otimes \C \subseteq \cD \subseteq \cE \otimes \cB$, one has $\cD = \cE \otimes \cB_0$ for some sub-\Cs{} $\cB_0$ of $\cB$.
\end{theorem}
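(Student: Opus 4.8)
The plan is to prove the two implications separately, isolating in each the role played by simplicity and the role played by property (S).

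For the forward implication, assume $\cE$ is simple and has property (S), and fix an intermediate $\cE \otimes \C \subseteq \cD \subseteq \cE \otimes \cB$. I would set
$$\cB_0 := \{ b \in \cB : 1 \otimes b \in \cD\},$$
which is evidently a unital sub-\Cs{} of $\cB$, being the pull-back of $\cD$ along the isometric embedding $b \mapsto 1 \otimes b$. One inclusion is free: $\cE \otimes \cB_0 \subseteq \cD$, because $e \otimes b = (e \otimes 1)(1 \otimes b) \in \cD$ whenever $e \in \cE$ and $b \in \cB_0$. The whole content lies in the reverse inclusion $\cD \subseteq \cE \otimes \cB_0$, and here property (S), applied to the inclusion $\cB_0 \subseteq \cB$, reduces the task to showing that every slice satisfies $R_\varphi(d) \in \cB_0$, i.e.\ $1 \otimes R_\varphi(d) \in \cD$, for all $d \in \cD$ and $\varphi \in \cE^*$; by linearity one may take $\varphi$ to be a state.

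The key step, and the point where simplicity enters, is an inner-averaging lemma: in a simple unital \Cs{} $\cE$, for every state $\varphi$, every finite set $a_1, \dots, a_n \in \cE$ and every $\epsilon > 0$, there exist $v_1, \dots, v_m \in \cE$ with $\| \sum_i v_i^* v_i \| \leq 1$ and $\| \sum_i v_i^* a_k v_i - \varphi(a_k)\, 1 \| < \epsilon$ for all $k$. Granting this, approximate $d$ by a finite sum $\sum_k a_k \otimes b_k$ and apply the lemma to the $a_k$: the completely positive contraction $z \mapsto \sum_i (v_i^* \otimes 1) z (v_i \otimes 1)$ maps $\cD$ into $\cD$ (since $\cE \otimes 1 \subseteq \cD$) and sends $d$ close to $1 \otimes R_\varphi(d)$, so $1 \otimes R_\varphi(d) \in \cD$ and we conclude $\cD = \cE \otimes \cB_0$. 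I expect the lemma to be the main obstacle of this direction. I would first reduce to pure states, using that the set of functionals admitting such approximations is convex and weak${}^*$-closed and that the state space is the weak${}^*$-closed convex hull of its pure states; then I would treat a pure state by excising it (Akemann–Anderson–Pedersen) to get $\| e a_k e - \varphi(a_k) e^2\|$ small, and use simplicity — that $1$ lies in the closure of $\{\sum_i s_i^* e^2 s_i\}$ since the nonzero positive element $e^2$ is full — to rescale the excising element up to the unit. The delicate bookkeeping is to keep the coefficient norm $\| \sum_i v_i^* v_i \|$ under control while the excision error is driven to zero.

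For the converse, assume every intermediate \Cs{} splits. Simplicity is the easy half: if $J \subsetneq \cE$ were a nonzero proper closed ideal, then with $\cB = \C^2$ the set
$$\cD = \{(a,b) \in \cE \oplus \cE : a - b \in J\}$$
is an intermediate \Cs{} (using that $J$ is a closed two-sided $*$-ideal) which is neither the diagonal $\cE \otimes \C$ nor all of $\cE \oplus \cE$; as the only unital sub-\Cs s of $\C^2$ are $\C 1$ and $\C^2$, this $\cD$ is not of the form $\cE \otimes \cB_0$, a contradiction. To recover property (S) I would start from $\cB_0 \subseteq \cB$ and $x \in \cE \otimes \cB$ with all slices $R_\varphi(x) \in \cB_0$, form the intermediate algebra $\cD = C^*(\cE \otimes \cB_0,\, x)$, and use the splitting hypothesis to write $\cD = \cE \otimes \cB_1$ with $\cB_0 \subseteq \cB_1$; the desired conclusion $x \in \cE \otimes \cB_0$ is then exactly the equality $\cB_1 = \cB_0$. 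I expect this equality to be the main obstacle of the converse: $\cB_1$ is the closed span of the slices of $\cD$, and after peeling the $\cE \otimes \cB_0$-factors off a word in the generators one is reduced to controlling the slices of words in $x$ and $x^*$, where the hypothesis on the slices of $x$ and the simplicity of $\cE$ must be combined. A naive choice of $\cD$ may fail to force $\cB_1 = \cB_0$, so the real subtlety is the construction of an intermediate algebra whose splitting genuinely witnesses the failure of property (S).
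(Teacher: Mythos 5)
A preliminary remark: the paper does not prove Theorem~\ref{thm:tensorI}; it quotes it from Zsid\'o and Zacharias, so your proposal can only be measured against those proofs, not against anything in the paper itself.

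Your forward implication is sound and is essentially the route of the cited proofs: set $\cB_0=\{b\in\cB : 1\otimes b\in\cD\}$ and prove $\cD\subseteq\cE\otimes\cB_0$ by showing every slice of $\cD$ lands in $\cB_0$, approximating $R_\varphi$ pointwise by inner completely positive maps built from $\cE\otimes\C\subseteq\cD$. The inner-averaging lemma you isolate is a true statement about simple unital $C^*$-algebras, and your reduction to pure states (convexity plus weak$^*$ density) is correct. Moreover, the bookkeeping you flag as the main obstacle dissolves if the scaling elements are produced by functional calculus: excise the pure state $\varphi$ to accuracy $\eta$ by a positive norm-one $e$; by simplicity choose $x_1,\dots,x_m$ with $\sum_i x_i^*(e^2-\tfrac12)_+x_i=1$; set $g(t)=t^{-1}\sqrt{(t^2-\tfrac12)_+}$ and $v_i=e\,g(e)\,x_i$. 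Then $\sum_i v_i^*v_i=1$ exactly, and since $g(t)^2\le 2(t^2-\tfrac12)_+$ one has $\big\|\sum_i x_i^*g(e)^2x_i\big\|\le 2$, whence $\big\|\sum_i v_i^*a_kv_i-\varphi(a_k)1\big\|\le 2\eta$; the error is controlled by the excision accuracy alone, with no circular dependence on the number or norms of the $x_i$. The simplicity half of your converse ($\cB=\C^2$, $\cD=\{(a,b):a-b\in J\}$) is also correct.

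The genuine gap is exactly where you predict it: recovering property (S). With $\cD=C^*(\cE\otimes\cB_0,x)$, the splitting hypothesis only gives $\cD=\cE\otimes\cB_1$ with $\cB_0\subseteq\cB_1$ and $x\in\cE\otimes\cB_1$; nothing identifies $\cB_1$ with $\cB_0$, because your hypothesis controls the slices of $x$ alone, whereas $\cB_1$ is the closed span of slices of arbitrary words such as $x(e\otimes b_0)x$, and slice maps are not multiplicative, so membership of those slices in $\cB_0$ is not formal. The missing idea is to apply the splitting hypothesis not to $C^*(\cE\otimes\cB_0,x)$ but to the Fubini product $F=\{z\in\cE\otimes\cB : R_\varphi(z)\in\cB_0 \ \text{for all}\ \varphi\in\cE^*\}$, which contains $\cE\otimes\cB_0$ and $x$. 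The one nontrivial point---and it is precisely the multiplicativity issue your construction runs into---is that $F$ is a $C^*$-subalgebra: if $(\pi,H,\xi)$ is the GNS triple of a state $\varphi$ and $(\eta_j)$ an orthonormal basis of $H$, then $R_\varphi(zw)=\sum_j R_{\psi_j}(z)\,R_{\varphi_j}(w)$ as a norm-convergent sum, where $\psi_j=\langle\xi,\pi(\cdot)\eta_j\rangle$ and $\varphi_j=\langle\eta_j,\pi(\cdot)\xi\rangle$ belong to $\cE^*$; hence $z,w\in F$ forces $R_\varphi(zw)\in\cB_0$. Granting this, splitting applied to $\cE\otimes\C\subseteq F\subseteq\cE\otimes\cB$ gives $F=\cE\otimes\cB_1$, and now $\cB_1$ is computable: $b\in\cB_1$ iff $1\otimes b\in F$ iff $b=R_\varphi(1\otimes b)\in\cB_0$ for a state $\varphi$. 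Thus $F=\cE\otimes\cB_0$, which is verbatim property (S), and in particular $x\in\cE\otimes\cB_0$. With this replacement your converse closes; as written, it does not.
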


\noindent This theorem immediately implies simplicity of any intermediate \Cs{} of an inclusion $\mathcal{E} \otimes \cB \subseteq \mathcal{E} \otimes \cA$, where $\mathcal{E}$ is simple and has property (S) and $\cB \subseteq \cA$ is $C^*$-irreducible, cf.\ Theorem~\ref{thm:tensorII}. Indeed, any such intermediate \Cs{} will be of the form $\mathcal{E} \otimes \mathcal{D}$ for some \Cs{} $\cB \subseteq \mathcal{D} \subseteq \cA$. Hence $\mathcal{D}$ is simple, so 
$\mathcal{E} \otimes \mathcal{D}$ is also simple by Takesaki's theorem. 

Describing all intermediate \Cs s of inclusion of the more general form $\cB_1 \otimes \cB_2 \subseteq \cA_1 \otimes \cA_2$, when $\cB_j \subseteq \cA_j$ are $C^*$-irreducible, $j=1,2$, is more tricky. We shall give partial results about such inclusions in Section~\ref{sec:tensor}.

We now turn to inclusions arising from dynamical systems. We already mentioned in the introduction inclusions of \Cs s arising from crossed products, and we shall get back to those examples in Section~\ref{sec:dynamics}. Suzuki, \cite{Suzuki:intermediate},
established  another Galois type correspondance between intermediate locally compact Hausdorff spaces equipped with an action of a fixed group $\Gamma$, and of their associated crossed product \Cs s:

\begin{theorem}[{Suzuki, \cite[Theorem 2.3]{Suzuki:intermediate}}] \label{thm:Suzuki}
Let $\Gamma$ be a discrete group with the approximation property (AP) of Haagerup and Kraus. Let $X$ and $Y$ be locally compact Hausdorff spaces with  free $\Gamma$-actions for which there is a surjective continuous $\Gamma$-equivariant map $Y \to X$.

It follows that the map $Z \mapsto C_0(Z) \rtimes_\mathrm{red} \Gamma$ is a bijection from the lattice of  $\Gamma$-spaces $Z$ between $X$ and $Y$ to the lattice of intermediate \Cs s of the inclusion $C_0(X) \rtimes_\mathrm{red} \Gamma \subseteq C_0(Y) \rtimes_\mathrm{red}  \Gamma$.
\end{theorem}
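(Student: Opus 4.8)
The plan is to show that $Z\mapsto C_0(Z)\rtimes_\mathrm{red}\Gamma$ is well defined, injective, and surjective, with the approximation property entering decisively only in the surjectivity step. The argument is the crossed-product analogue of the slice-map argument behind Theorem~\ref{thm:tensorI}, the AP playing the role that Wassermann's property (S) plays there. For well-definedness, an intermediate $\Gamma$-space $Z$ is the same datum as a factorization $Y\to Z\to X$ of the given (proper) equivariant surjection into equivariant proper surjections; dualising yields $\Gamma$-equivariant inclusions $C_0(X)\subseteq C_0(Z)\subseteq C_0(Y)$, and since the reduced crossed product is functorial and monotone on $\Gamma$-invariant subalgebras (the regular representation of $C_0(Y)\rtimes_\mathrm{red}\Gamma$ restricts to the regular representation of the subalgebra), one obtains intermediate \Cs s $C_0(X)\rtimes_\mathrm{red}\Gamma\subseteq C_0(Z)\rtimes_\mathrm{red}\Gamma\subseteq C_0(Y)\rtimes_\mathrm{red}\Gamma$.

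For injectivity I would recover $Z$ from its crossed product. The canonical faithful conditional expectation $E\colon C_0(Y)\rtimes_\mathrm{red}\Gamma\to C_0(Y)$ restricts on $C_0(Z)\rtimes_\mathrm{red}\Gamma$ to the canonical expectation onto $C_0(Z)$, whence $C_0(Z)=\bigl(C_0(Z)\rtimes_\mathrm{red}\Gamma\bigr)\cap C_0(Y)$. Thus the coefficient algebra $C_0(Z)$ is determined by the crossed product, and $Z$ together with its $\Gamma$-action is then recovered by Gelfand duality; injectivity follows.

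Surjectivity is the heart. Given an intermediate \Cs{} $\cB$ with $C_0(X)\rtimes_\mathrm{red}\Gamma\subseteq\cB\subseteq C_0(Y)\rtimes_\mathrm{red}\Gamma$, set $D:=\cB\cap C_0(Y)$. First I would check that $\cB$ is $\Gamma$-invariant: writing $(e_\alpha)$ for an approximate unit of $C_0(X)$, properness of $Y\to X$ makes $(e_\alpha)$ a strict approximate unit of $C_0(Y)\rtimes_\mathrm{red}\Gamma$, so for $x\in\cB$ one has $\lambda_g x\lambda_g^{*}=\lim_\alpha (e_\alpha\lambda_g)\,x\,(e_\alpha\lambda_g)^{*}\in\cB$, because $e_\alpha\lambda_g\in C_0(X)\rtimes_\mathrm{red}\Gamma\subseteq\cB$. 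Consequently $D$ is a $\Gamma$-invariant commutative C*-subalgebra of $C_0(Y)$ with $C_0(X)\subseteq D$, nondegenerate since $\pi$ is proper, so $D=C_0(Z)$ for an intermediate $\Gamma$-space $Z$. The inclusion $C_0(Z)\rtimes_\mathrm{red}\Gamma\subseteq\cB$ is then routine: the same approximate-unit argument gives $d\lambda_g=\lim_\alpha d(e_\alpha\lambda_g)\in\cB$ for every $d\in D$.

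The reverse inclusion $\cB\subseteq C_0(Z)\rtimes_\mathrm{red}\Gamma$ is where the AP is indispensable. For $x\in\cB$ write $x_g:=E(x\lambda_g^{*})\in C_0(Y)$ for its Fourier coefficients. The AP is equivalent to the slice-map (Fubini) property for reduced crossed products of Haagerup--Kraus, which here reads: an element $x$ lies in $D\rtimes_\mathrm{red}\Gamma$ if and only if $x_g\in D$ for every $g$; concretely, the AP furnishes finitely supported $v_i\in A(\Gamma)$ whose Herz--Schur multipliers $m_{v_i}(x)=\sum_g v_i(g)\,x_g\lambda_g$ converge to the identity in the point-norm topology on every reduced crossed product. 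Thus it remains to show that each $x_g$ lies in $D=\cB\cap C_0(Y)$, i.e.\ that $x_g\in\cB$; this is exactly where freeness of the actions is used, through the Kishimoto--Olesen--Pedersen spectral analysis for topologically free actions, which allows one to extract the coefficients $x_g$ as norm-limits of elements of $\cB$ and hence forces $x_g\in\cB\cap C_0(Y)=D$. Granting this, $m_{v_i}(x)\in C_0(Z)\rtimes_\mathrm{red}\Gamma$ and passing to the limit yields $x\in C_0(Z)\rtimes_\mathrm{red}\Gamma$, so $\cB=C_0(Z)\rtimes_\mathrm{red}\Gamma$. The main obstacle is precisely the coordination of the two inputs in this last step: the freeness assumption controls the coefficient algebra $D$ and keeps the Fourier coefficients of elements of $\cB$ inside $\cB$, while the AP guarantees that the truncated Fourier series reconverge to $x$; neither alone suffices, and the delicate point is that the multipliers supplied by the AP must be compatible with the compressions used in the coefficient extraction.
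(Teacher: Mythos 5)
This statement appears in the paper only as quoted background (it is Suzuki's theorem, cited from \cite{Suzuki:intermediate}); the paper contains no proof of it, so there is nothing internal to compare your argument against. Judged on its own, your sketch is essentially a correct reconstruction of the strategy of Suzuki's original proof: well-definedness and injectivity via the compatibility of the canonical conditional expectations and the identity $C_0(Z) = \bigl(C_0(Z)\rtimes_\mathrm{red}\Gamma\bigr)\cap C_0(Y)$; surjectivity by setting $D=\cB\cap C_0(Y)$, proving $\Gamma$-invariance of $\cB$ with the approximate-unit trick (properness of $Y\to X$, which you rightly flag, is what makes $C_0(X)\subseteq C_0(Y)$ and the whole statement parse), extracting Fourier coefficients using freeness, and reconverging the Fourier series via the finitely supported Herz--Schur multipliers that the AP provides. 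Two remarks. First, the freeness-based compression argument --- with a partition of unity $\{\phi_i\}\subseteq C_c(X)$ chosen so that $\phi_i\,(h\cdot\phi_i)=0$ for the relevant finitely many $h$; note it is freeness of the action on $X$ that matters, since only $C_0(X)$ is known a priori to lie in $\cB$ --- directly yields $E(x\lambda_g^*)\lambda_g=\lim_i \sum_i \phi_i\, x\,(g^{-1}\!\cdot\phi_i)\in\cB$, which is \emph{not} yet the claim $E(x\lambda_g^*)\in\cB$ that you need; one must still strip off $\lambda_g$, e.g.\ by right-multiplying by $\lambda_g^* e_\alpha=(g^{-1}\!\cdot e_\alpha)\lambda_{g^{-1}}\in C_0(X)\rtimes_\mathrm{red}\Gamma\subseteq\cB$ with $(e_\alpha)$ an approximate unit of $C_0(X)$ and passing to the limit. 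Your sketch glosses this step, but the assertion it makes is correct and provable exactly this way. Second, the ``delicate coordination'' you worry about at the end is not a real issue: once one knows that every Fourier coefficient of every $x\in\cB$ lies in $D$, the AP statement (that $D\rtimes_\mathrm{red}\Gamma$ consists precisely of the elements of $C_0(Y)\rtimes_\mathrm{red}\Gamma$ all of whose coefficients lie in $D$) applies verbatim; the compression step and the multiplier step are used sequentially and independently, with no compatibility condition between them.
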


\noindent In other words, each intermediate \Cs{}  $C_0(X) \rtimes_\mathrm{red} \Gamma \subseteq \cD \subseteq C_0(Y) \rtimes_\mathrm{red}  \Gamma$ is of the form $\cD = C_0(Z) \rtimes_\mathrm{red} \Gamma$ for some
$\Gamma$-equivariant intermediate \Cs{} $C_0(X) \subseteq C_0(Z) \subseteq C_0(Y)$, under the assumption of Theorem~\ref{thm:Suzuki}.

We mention also the following result on the existence of ``tight inclusions'' from \cite{Suzuki:intermediate} establishing situations where the lattice of intermediate \Cs s is trivial for non-trivial reasons:

\begin{theorem}[{Suzuki, \cite[Theorem 5.1]{Suzuki:intermediate}}] Let $\cA$ be a simple \Cs{} that tensorially absorbs the Cuntz algebra $\cO_\infty$. Then $\cA$ admits an endomorphism $\sigma$ for which the inclusion $\sigma(\cA) \subseteq \cA$ on the one hand is non-trivial in the sense that there is no conditional expectation $\cA \to \sigma(\cA)$, while on the other hand it has no proper intermediate \Cs s, i.e., the inclusion is \emph{tight}. 
\end{theorem}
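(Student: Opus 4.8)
The plan is to build $\sigma$ from the tensor factor $\cO_\infty$, exploiting the isomorphism $\cA \cong \cA \otimes \cO_\infty$. A preliminary observation fixes the shape of the construction: a conjugation $a \mapsto vav^*$ by a single isometry $v$ is of no use, since its image is the corner $vv^*\cA vv^*$, onto which the compression $x \mapsto vv^*xvv^*$ is a conditional expectation; likewise any conjugation by finitely many isometries admits an expectation of finite index. Thus $\sigma$ must have non--hereditary image and must genuinely use the infinitely many isometries $s_1, s_2, \dots$ with pairwise orthogonal ranges that generate $\cO_\infty$. Using the universal property of $\cO_\infty$ I would define a unital endomorphism $\psi$ by sending the $s_i$ to a carefully chosen family $w_1, w_2, \dots$ of isometries with orthogonal ranges, so that $\cB_0 := \psi(\cO_\infty) = C^*(\{w_i\})$ is non--hereditary, is a maximal $C^*$-subalgebra of $\cO_\infty$, and carries no conditional expectation; I then set $\sigma := \Id_\cA \otimes \psi$, so that $\sigma(\cA) \cong \cA \otimes \cB_0$. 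Since $\cA$ is simple and $\cO_\infty$--absorbing it is moreover purely infinite, which I would use in the verifications below.

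For the absence of an expectation I would first rule out a conditional expectation $\cO_\infty \to \cB_0$, exploiting that the complementary projections $1 - \sum_{i \le n} w_iw_i^*$ do not vanish in the limit, so that no averaging of $\cO_\infty$ onto $\cB_0$ can be made consistent with the Cuntz relations; this is one of the delicate points. Granting this, the property transfers to $\cA$ by slicing: if $E \colon \cA \otimes \cO_\infty \to \cA \otimes \cB_0$ were an expectation, then for any state $\omega$ on $\cA$ the map $y \mapsto (\omega \otimes \Id)\big(E(1_\cA \otimes y)\big)$ is a conditional expectation $\cO_\infty \to \cB_0$ (it is contractive, completely positive, restricts to the identity on $\cB_0$, and is $\cB_0$--bimodular), contradicting the first step.

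For tightness I would argue in $\cO_\infty$ that the inclusion $\cB_0 \subseteq \cO_\infty$ has no proper intermediate algebra: given $\cB_0 \subsetneq \cD_0 \subseteq \cO_\infty$ and $c \in \cD_0 \setminus \cB_0$, the plan is to use simplicity and pure infiniteness of $\cO_\infty$, together with the explicit form of the $w_i$, to recover from $c$ and $\cB_0$ a full family of the original generators $s_i$, whence $\cD_0 = \cO_\infty$. To lift this to $\cA$, note that any intermediate algebra $\cD$ of $\cA \otimes \cB_0 \subseteq \cA \otimes \cO_\infty$ contains $\cA \otimes \C$; when $\cA$ is nuclear it has Wassermann's property (S), so Theorem~\ref{thm:tensorI} yields $\cD = \cA \otimes \cD_0$ with $\cB_0 \subseteq \cD_0 \subseteq \cO_\infty$, and tightness of the $\cO_\infty$--inclusion forces $\cD = \sigma(\cA)$ or $\cD = \cA$.

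The main obstacle is the construction itself, namely choosing one family $\{w_i\}$ that meets all the competing demands at once: non--hereditary image and absence of an expectation want the $w_i$ spread thinly, whereas maximality in the subalgebra lattice wants $\cB_0$ close to $\cO_\infty$, and natural candidates illustrate the tension -- finitely many isometries fail to avoid an expectation, while the subalgebra $C^*(\{s_is_j\}_{i,j})$ generated by the length--two words fails tightness, since the $C^*$-algebra generated by all words of length at least two lies strictly between it and $\cO_\infty$. A secondary obstacle is the tensor transfer of tightness when $\cA$ is not nuclear: this case falls outside Theorem~\ref{thm:tensorI} and would require a direct Fubini/slice argument absorbing the first tensor leg.
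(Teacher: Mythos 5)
Your proposal contains a genuine gap, and it sits exactly where the theorem's content lies. The reduction scheme you describe is sound as far as it goes: the slice-map transfer of ``no conditional expectation'' is correct (if $E \colon \cA \otimes \cO_\infty \to \cA \otimes \cB_0$ were an expectation and $\omega$ a state on $\cA$, then $y \mapsto R_\omega(E(1_\cA \otimes y))$ is a unital completely positive projection of $\cO_\infty$ onto $\cB_0$, hence an expectation by Tomiyama's theorem), and for $\cA$ simple with Wassermann's property (S) the Zacharias--Zsido theorem (Theorem~\ref{thm:tensorI}) does reduce tightness of $\cA \otimes \cB_0 \subseteq \cA \otimes \cO_\infty$ to tightness of $\cB_0 \subseteq \cO_\infty$. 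But all this accomplishes is a reduction of the theorem, for property (S) coefficients, to its special case $\cA = \cO_\infty$: you must still exhibit a unital endomorphism $\psi$ of $\cO_\infty$ whose image $\cB_0 = C^*(\{w_i\})$ is simultaneously a \emph{maximal} sub-\Cs{} of $\cO_\infty$ and admits \emph{no} conditional expectation. That object is the entire mathematical content of the statement, and in your write-up it appears only as a ``carefully chosen family'' of isometries; you observe yourself that every explicit candidate you name (finitely many isometries, length-two words) fails one of the competing requirements. So nothing of substance has actually been proved. In addition, the statement assumes only that $\cA$ is simple and $\cO_\infty$-absorbing, with no nuclearity or exactness hypothesis, so the non-(S) case is not a side issue but part of the theorem; your proposed ``direct Fubini/slice argument'' for that case is again a placeholder, since slice-map control of intermediate algebras is precisely what property (S) encodes.

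For comparison: the paper you are being checked against does not prove this result at all --- it quotes it as Suzuki's theorem, cited to \cite[Theorem 5.1]{Suzuki:intermediate} --- and Suzuki's own argument takes a route quite different from the generators-and-relations approach you sketch. Rather than choosing isometries inside $\cO_\infty$, he produces the inclusion from dynamics, using the machinery his paper develops: intermediate \Cs s of suitable crossed-product inclusions are classified by intermediate dynamical data (the mechanism behind Theorem~\ref{thm:Suzuki}), so tightness can be arranged from an extension of dynamical systems with no proper intermediate factors; the absence of a conditional expectation comes from the absence of suitable invariant/equivariant measures; and $\cO_\infty$-absorption together with classification converts the resulting inclusion into an endomorphism of the given $\cA$. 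In other words, the existence question your plan defers --- a maximal, expectation-free copy of $\cO_\infty$ inside $\cO_\infty$ --- is exactly what is answered there, by dynamical rather than bare-hands methods; filling your gap would essentially amount to redoing that work.
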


\noindent Suzuki has more results about tight inclusions in \cite{Suzuki:tight}. 

A discrete group $\Gamma$ is said to be \emph{$C^*$-simple} if its reduced group \Cs{} $C^*_\lambda(\Gamma)$ is simple. Amrutam and Kalantar prove in \cite[Theorem 1.1]{Amrutam-Kalantar-ETDS2020} that for certain actions of a $C^*$-simple group $\Gamma$ on a unital \Cs{} $\cA$, each intermediate \Cs{} $\mathcal{D}$ of the inclusion $C^*_\lambda(\Gamma) \subseteq \cA \rtimes_\mathrm{red} \Gamma$ is simple.   
As a corollary they obtain that for each minimal action of a $C^*$-simple group $\Gamma$ on a  compact Hausdorff space $X$, each intermediate \Cs{} $C^*_\lambda(\Gamma) \subseteq \cD  \subseteq C(X) \rtimes_\mathrm{red} \Gamma$ is simple. In Theorem~\ref{thm:AK} we extend these results and provide a (partly) new proof based on the techniques developed here.

Amrutam and Ursu, \cite{Amrutam-Ursu-2021}, consider the more general situation of a group $\Gamma$ acting minimally on compact Hausdorff spaces $X$ and $Y$, where $Y$ is a $\Gamma$-invariant factor of $X$, ensuring that we have a $\Gamma$-equivariant inclusion $C(Y) \subseteq C(X)$. They show that if $C(Y) \rtimes_\mathrm{red} \Gamma$ is simple, then so is each intermediate \Cs{} of the inclusion $C(Y) \rtimes_\mathrm{red} \Gamma  \subseteq C(X) \rtimes_\mathrm{red} \Gamma$,
cf.\ \cite[Theorem 1.5]{Amrutam-Ursu-2021}. If one further assumes that 
$\Gamma$ has property (AP) and the action is free, then Suzuki's Theorem~\ref{thm:Suzuki}  implies that any such intermediate \Cs{}  is of the form $C(Z) \rtimes_\mathrm{red} \Gamma$, which again yields simplicity.

The strategy of proof used in both the Amrutam--Kalantar and the Amrutam--Ursu papers rely on clever generalizations of the \emph{Powers' averaging property}, which in its original form states that
$$\tau_0(x) \cdot 1 \in \overline{\mathrm{conv}\{u_s^* x u_s : s \in \Gamma\}},$$
for each $x \in C^*_\lambda(\Gamma)$, when $\Gamma$ is a free group (on two or more generators), where $\tau_0$ is the canonical trace on $C^*_\lambda(\Gamma)$, and where $\{u_t\}_{t \in \Gamma}$ is the unitary representation of $\Gamma$ in $ C^*_\lambda(\Gamma)$. This, in turn, implies that $C^*_\lambda(\Gamma)$ is simple with unique trace. It was later shown independently by Haagerup, \cite{Haagerup:new-look}, and Kennedy, \cite{Kennedy:C*-simple}, that the Powers' averaging property above, in fact, characterizes $C^*$-simple groups $\Gamma$.

\section{Irreducible inclusions of \Cs s} \label{sec:irr-inclusion}

\noindent Here is the main definition of this paper:

\begin{definition} \label{def:C*-irr}
A unital inclusion $\cB \subseteq \cA$ of simple unital \Cs s is said to be \emph{$C^*$-irreducible} if each intermediate \Cs{} $\cB \subseteq \cD \subseteq \cA$ is simple. (The inclusion $\cB \subseteq \cA$ is \emph{unital} if the unit of $\cA$ belongs to $\cB$.)
\end{definition}

\noindent  It will be shown in Remark~\ref{rem:irr} below that any $C^*$-irreducible inclusion $\cB \subseteq \cA$ is irreducible (in the sense that $\cB' \cap \cA = \C$), while the converse is not true in general, even under the additional assumption that $\cA$ and $\cB$ are simple. In some instances, irreducibility and $C^*$-irreducibility do agree, see for example Theorem~\ref{thm:crossed-product}, and see also Theorem~\ref{thm:vNirr}.

Further justification of the terminology  of Definition~\ref{def:C*-irr} will be given in the next section. We proceed to give an intrinsic description of $C^*$-irreducible inclusions. First we need the following:

\begin{lemma} \label{lm:span}
Let $\cA$ be a unital \Cs{} and let $\mathcal{W}$ be a subset of $\cA$. Let $a \in \cA^+$, and suppose that there exist $x_1, \dots, x_n \in \mathrm{span}(\mathcal{W})$ such that $\sum_{j=1}^n x_j^*ax_j \ge 1_\cA$. Then there exist $w_1, \dots, w_m \in \mathcal{W}$ such that $\sum_{j=1}^m w_j^*aw_j \ge 1_\cA$.
\end{lemma}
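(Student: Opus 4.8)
The plan is to expand the hypothesis into a scalar-weighted sum over the elements of $\mathcal{W}$, and then to dominate that weighted sum by an ordinary (diagonal) sum using positivity, so that in the end each relevant element of $\mathcal{W}$ need only be repeated finitely often.

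First I would fix finitely many $w_1, \dots, w_N \in \mathcal{W}$ so that each of the finitely many $x_j$ can be written as $x_j = \sum_{k=1}^N \lambda_{jk} w_k$ with $\lambda_{jk} \in \C$; this is possible since each $x_j$ lies in $\Span(\mathcal{W})$. Expanding $\sum_{j} x_j^* a x_j$ then gives $\sum_{k,l} M_{kl}\, w_k^* a w_l \ge 1_\cA$, where $M = (M_{kl})$ is the scalar matrix with entries $M_{kl} = \sum_j \overline{\lambda_{jk}} \lambda_{jl}$. The point of this bookkeeping is that $M = \Lambda^* \Lambda$ for the $n \times N$ matrix $\Lambda = (\lambda_{jk})$, so $M$ is positive semidefinite; moreover $M \neq 0$, since the left-hand side dominates $1_\cA > 0$, and hence $\|M\| > 0$.

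Next I would linearize by setting $y_k = a^{1/2} w_k$, so that $w_k^* a w_l = y_k^* y_l$, and view the inequality as $Y^*(M \otimes 1_\cA) Y \ge 1_\cA$, where $Y = (y_1, \dots, y_N)^{T}$ is a column over $\cA$ and $M \otimes 1_\cA \in M_N(\cA)$ is the image of $M$ under the unital embedding $M_N(\C) \hookrightarrow M_N(\cA)$. The crux is the elementary bound $M \le \|M\|\, I_N$ in $M_N(\C)$, which upon applying the embedding gives $M \otimes 1_\cA \le \|M\|\, I_N$ in $M_N(\cA)$, so that $1_\cA \le Y^*(M \otimes 1_\cA) Y \le \|M\|\, Y^* Y = \|M\| \sum_{k=1}^N y_k^* y_k = \|M\| \sum_{k=1}^N w_k^* a w_k$. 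This is the step that replaces all the off-diagonal cross terms $w_k^* a w_l$ by the diagonal terms $w_k^* a w_k$ alone, at the cost of the scalar factor $\|M\|$.

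Finally I would clear this scalar factor by repetition. Choosing a positive integer $m_0 \ge \|M\|$ and using that each summand $w_k^* a w_k \ge 0$, I obtain $\sum_{k=1}^N m_0\, w_k^* a w_k \ge \|M\| \sum_{k=1}^N w_k^* a w_k \ge 1_\cA$; listing each $w_k$ exactly $m_0$ times then produces $m = N m_0$ elements of $\mathcal{W}$ with the required property. I expect the only genuinely substantive step to be the operator-norm domination in the previous paragraph, which converts the positive-semidefinite coefficient matrix into a diagonal one; the rounding to integer multiplicities is elementary precisely because positivity of the summands lets one increase coefficients freely. The one point to record carefully is that $M \neq 0$ (hence $\|M\| > 0$) before invoking $m_0 \ge \|M\|$, which is immediate from the hypothesis $\ge 1_\cA$.
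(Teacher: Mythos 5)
Your proof is correct, and it shares the paper's two-stage skeleton --- dominate the off-diagonal cross terms $w_k^*aw_l$ by diagonal terms $w_k^*aw_k$ at the cost of a scalar constant, then absorb that constant by repeating elements of $\mathcal{W}$ --- but the key domination step is carried out by a genuinely different device. The paper first reduces to a single element: it suffices to dominate each $x^*ax$, $x \in \mathrm{span}(\mathcal{W})$, and the cross terms are killed by the elementary inequality $v^*aw + w^*av \le v^*av + w^*aw$ (i.e.\ $(v-w)^*a(v-w) \ge 0$), giving $x^*ax \le \ell \sum_{j=1}^\ell |\lambda_j|^2\, w_j^*aw_j$ and hence a weighted repetition count of roughly $\ell\,|\lambda_j|^2$ per element. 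You instead keep all the $x_j$ together, identify the coefficient matrix $M = \Lambda^*\Lambda$ as positive semidefinite, and push the scalar bound $M \le \|M\|\, I_N$ through the unital (hence order-preserving) embedding $M_N(\C) \to M_N(\cA)$ to get the uniform estimate $1_\cA \le \|M\| \sum_{k=1}^N w_k^*aw_k$, with the same repetition count $\lceil \|M\| \rceil$ for every $w_k$. The paper's route buys economy of means: a one-line inequality inside $\cA$ itself, with no matrix algebras over $\cA$; your route buys conceptual clarity, since the cross-term cancellation is exposed as the single fact that a positive semidefinite scalar matrix is dominated by $\|M\|$ times the identity (the paper's inequality is essentially the rank-one instance of this), and it processes all the $x_j$ in one pass. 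One small remark: your care in checking $M \ne 0$ is harmless but not needed --- the chain $m_0 \sum_k w_k^*aw_k \ge \|M\| \sum_k w_k^*aw_k \ge 1_\cA$ uses only $m_0 \ge \|M\|$ and positivity of the summands, and the case $\|M\| = 0$ is already incompatible with the hypothesis.
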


\begin{proof} It suffices to show that for each $x \in \mathrm{span}(\mathcal{W})$ there exist $w_1, \dots, w_k 
\in \mathcal{W}$ such that $\sum_{j=1}^k w_j^*aw_j \ge x^*ax$.  Write $x = \sum_{j=1}^\ell \lambda_j w_j$, with $w_j \in \mathcal{W}$ and $\lambda_j \in \C$. Since $v^*aw+w^*av\le v^*av+w^*aw$, for all $v,w \in \cA$, we get
$$x^*ax = \sum_{i,j=1}^\ell (\lambda_j w_j)^*a(\lambda_iw_i) \le \ell \sum_{j=1}^\ell |\lambda_j|^2 w_j^*aw_j.$$
Upon repeating each $w_j$ at least $\ell \cdot |\lambda_j|^2$ times, and after relabelling the $w_j$'s, we obtain $x^*ax \le \sum_{j=1}^k w_j^*aw_j$, as desired.
\end{proof}

\noindent An element $a$ in a \Cs{} $\cA$ is \emph{full} if it is not contained in any proper closed two-sided ideal of $\cA$. The following lemma is elementary and well-known, see eg.\ 
\cite[Exercise 4.8]{RorLarLau:k-theory} for (i) $\Rightarrow$ (ii), and use Lemma~\ref{lm:span} to see that (ii) $\Rightarrow$ (iii):

\begin{lemma} \label{lm:full}
The following conditions are equivalent for each positive element $a$ in a unital \Cs{} $\cA$:
\begin{enumerate}
\item $a$ is full in $\cA$,
\item there exist $x_1, \dots, x_n \in \cA$ such that $\sum_{j=1}^n x_j^*bx_j \ge 1_\cA$,
\item there exist unitaries $u_1, \dots, u_m \in \cA$ such that $\sum_{j=1}^m u_j^*bu_j \ge 1_\cA$.
\end{enumerate}
\end{lemma}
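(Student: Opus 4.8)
The plan is to establish the cycle (i)$\Rightarrow$(ii)$\Rightarrow$(iii)$\Rightarrow$(i), which suffices since each condition then implies the others. (I read the positive element as $a$ throughout; the $b$'s appearing in (ii) and (iii) are evidently misprints for $a$.) Two of the three implications are pointed to in the text, so the genuinely new work is only the short ideal argument closing the loop.

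For (i)$\Rightarrow$(ii) I would simply invoke \cite[Exercise 4.8]{RorLarLau:k-theory}. The idea behind it: fullness of $a$ says the closed two-sided ideal it generates is all of $\cA$, and this ideal coincides with $\overline{\operatorname{span}}\{x^*ay : x,y \in \cA\}$ (by polarization this is $\overline{\cA a \cA}$). Hence $1_\cA$ can be approximated to within, say, $1/2$ by a finite sum $z = \sum_i x_i a y_i$; a standard rescaling turns such an approximate factorization of the unit into an honest lower bound $\sum_j x_j^* a x_j \ge 1_\cA$. I expect this to be the only step carrying real content, precisely because it must convert the purely ideal-theoretic hypothesis of fullness into a concrete finite inequality.

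For (ii)$\Rightarrow$(iii) I would apply Lemma~\ref{lm:span} with $\mathcal{W}$ taken to be the unitary group $U(\cA)$. Since every element of a unital \Cs{} is a linear combination of (at most four) unitaries, we have $\operatorname{span}(\mathcal{W}) = \cA$, so the elements $x_1,\dots,x_n$ furnished by (ii) lie in $\operatorname{span}(\mathcal{W})$. Lemma~\ref{lm:span} then yields $u_1,\dots,u_m \in U(\cA)$ with $\sum_{j} u_j^* a u_j \ge 1_\cA$, which is exactly (iii).

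Finally, for (iii)$\Rightarrow$(i) I would argue directly. Put $c = \sum_{j=1}^m u_j^* a u_j \ge 1_\cA$ and let $J$ be any closed two-sided ideal of $\cA$ containing $a$. Each $u_j^* a u_j$ then lies in $J$, whence $c \in J$; and since $c \ge 1_\cA$ the element $c$ is invertible, so $1_\cA = c^{-1/2} c\, c^{-1/2} \in J$ because $J$ is an ideal. Thus $J = \cA$, showing that $a$ belongs to no proper ideal, i.e.\ $a$ is full. As noted, the only nontrivial ingredient is the cited implication (i)$\Rightarrow$(ii); the remaining steps are formal, the first being an application of Lemma~\ref{lm:span} and the last the elementary observation that a positive element dominating $1_\cA$ is invertible and lies in every ideal containing $a$.
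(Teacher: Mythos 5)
Your proof is correct and follows essentially the same route as the paper: the paper likewise handles (i)$\Rightarrow$(ii) by citing \cite[Exercise 4.8]{RorLarLau:k-theory} and (ii)$\Rightarrow$(iii) by applying Lemma~\ref{lm:span} with $\mathcal{W}=\cU(\cA)$ (using that the unitaries span $\cA$), leaving the remaining implication back to (i) implicit. Your explicit closing of the cycle --- that $c=\sum_{j=1}^m u_j^*au_j\ge 1_\cA$ is invertible and lies in every closed two-sided ideal containing $a$, forcing any such ideal to be all of $\cA$ --- is exactly the intended elementary step.
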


\begin{definition} Let $\cB \subseteq \cA$ be a unital inclusion of \Cs s. A positive element $a \in \cA$ is said to be \emph{full relatively to $\cB$} if there exist elements $x_1, \dots, x_n \in \cB$ such that $\sum_{j=1}^n x_j^*ax_j \ge 1_\cA$. 
\end{definition}

\noindent The property of being relatively full as defined above can be reformulated as follows:

\begin{lemma} \label{lm:rel-full}
Let $\cB \subseteq \cA$ be a unital inclusion of \Cs s. Let $\mathcal{W}$ be a subset of $\cB$ such that $\mathrm{span}(\mathcal{W})$ is dense in $\cB$. Then the following conditions are equivalent for each positive element  $a \in \cA$:
\begin{enumerate}
\item $a$ is full relatively to $\cB$,
\item  there exist $x_1, \dots, x_n \in \cB$ such that $\sum_{j=1}^n x_j^* a x_j$ is invertible in $\cA$,
\item   there exist $w_1, \dots, w_m \in \mathcal{W}$ such that $\sum_{j=1}^m w_j^* a w_j \ge 1_\cA$,
\item  there exist unitaries $u_1, \dots, u_m \in \cB$ such that $\sum_{j=1}^m u_j^* a u_j \ge 1_\cA$.
\end{enumerate}
\end{lemma}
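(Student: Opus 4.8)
The plan is to use condition (i) as a hub and prove the three equivalences (i) $\Leftrightarrow$ (ii), (i) $\Leftrightarrow$ (iii), and (i) $\Leftrightarrow$ (iv). Three of the directions are essentially immediate. For (iii) $\Rightarrow$ (i) and (iv) $\Rightarrow$ (i) one simply observes that $\mathcal{W} \subseteq \cB$ and that the unitaries of $\cB$ lie in $\cB$, so the witnessing elements already belong to $\cB$ and the defining inequality of (i) holds verbatim. For (i) $\Rightarrow$ (ii) one notes that any positive element dominating $1_\cA$ is automatically invertible. The genuine content therefore sits in (ii) $\Rightarrow$ (i), (i) $\Rightarrow$ (iii), and (i) $\Rightarrow$ (iv).

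For (ii) $\Rightarrow$ (i) I would rescale. If $s := \sum_{j=1}^n x_j^* a x_j$ is positive and invertible, then $s \ge \varepsilon 1_\cA$ with $\varepsilon = \|s^{-1}\|^{-1} > 0$, so replacing each $x_j$ by $\varepsilon^{-1/2} x_j \in \cB$ gives $\sum_j (\varepsilon^{-1/2}x_j)^* a (\varepsilon^{-1/2}x_j) = \varepsilon^{-1} s \ge 1_\cA$, which is (i); here I use only that $\cB$ is closed under scalar multiplication.

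For (i) $\Rightarrow$ (iii) the idea is to combine a perturbation with Lemma~\ref{lm:span}. Starting from $x_1,\dots,x_n \in \cB$ with $\sum_j x_j^* a x_j \ge 1_\cA$, I would first create slack by rescaling, since the elements $\sqrt{2}\,x_j$ satisfy $\sum_j (\sqrt{2}\,x_j)^* a (\sqrt{2}\,x_j) \ge 2\cdot 1_\cA$. Because $\mathrm{span}(\mathcal{W})$ is dense in $\cB$, I can choose $x_j' \in \mathrm{span}(\mathcal{W})$ approximating $\sqrt{2}\,x_j$ closely enough that $\|\sum_j (x_j')^* a x_j' - \sum_j (\sqrt{2}\,x_j)^* a (\sqrt{2}\,x_j)\| < 1$, using continuity of $(y_1,\dots,y_n) \mapsto \sum_j y_j^* a y_j$. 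This leaves $\sum_j (x_j')^* a x_j' \ge 1_\cA$ with all $x_j' \in \mathrm{span}(\mathcal{W})$, whereupon Lemma~\ref{lm:span} produces $w_1,\dots,w_m \in \mathcal{W}$ with $\sum_j w_j^* a w_j \ge 1_\cA$, i.e.\ (iii). Finally, (i) $\Rightarrow$ (iv) is exactly this same argument applied to the particular set $\cU(\cB)$ of unitaries of $\cB$: since every element of a unital \Cs{} is a linear combination of unitaries, $\mathrm{span}(\cU(\cB)) = \cB$ is dense, so the implication (i) $\Rightarrow$ (iii) applies with $\mathcal{W} = \cU(\cB)$ and yields unitary witnesses.

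The one point requiring care — the main obstacle — is the perturbation in (i) $\Rightarrow$ (iii): the bound $\ge 1_\cA$ is not preserved under approximation, so one must manufacture a definite gap by rescaling \emph{before} perturbing and only then invoke Lemma~\ref{lm:span}. Everything else reduces to elementary positivity together with the two structural facts that $\cB$ is closed under scalar multiplication and contains enough unitaries to span itself.
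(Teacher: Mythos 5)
Your proposal is correct and uses essentially the same ingredients as the paper's proof: the gap-creating rescaling-plus-perturbation into $\mathrm{span}(\mathcal{W})$ followed by Lemma~\ref{lm:span}, and the fact that the unitaries span a unital \Cs{} (applied via $\mathcal{W}=\cU(\cB)$). The only difference is organizational---the paper runs the cycle (iv) $\Rightarrow$ (i) $\Rightarrow$ (ii) $\Rightarrow$ (iii) $\Rightarrow$ (iv), performing the perturbation in the step (ii) $\Rightarrow$ (iii), whereas you use (i) as a hub, which proves a couple of redundant implications but is mathematically the same argument.
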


\begin{proof} (iv) $\Rightarrow$ (i) $\Rightarrow$ (ii) are trivial, and (iii) $\Rightarrow$ (iv) follows from the fact that any unital \Cs{} is the span of its unitary elements. 

(ii) $\Rightarrow$ (iii). Note that the sum $\sum_{j=1}^n x_j^* a x_j$ is invertible if and only if $\sum_{j=1}^n x_j^* a x_j \ge \delta \cdot 1_\cA$, for some $\delta >0$. Approximate each $x_j$ by elements $y_j \in \mathrm{span}(\mathcal{W})$ close enough so that $\sum_{j=1}^n y_j^* a y_j \ge (\delta/2) \cdot 1_\cA$. Multiplying each $y_j$ by $(\delta/2)^{-1/2}$ we obtain that $\sum_{j=1}^n y_j^* a y_j \ge 1_\cA$. It now follows from Lemma~\ref{lm:span} that (iii) holds for suitable $w_1, \dots, w_m \in \mathcal{W}$. 
\end{proof}

\noindent The next result connects the notion of relative fullness to the usual notion of fullness:

\begin{proposition} \label{prop:rel-full}
Let $\cB \subseteq \cA$ be a unital inclusion of \Cs s. A positive element $a \in \cA$ is full relatively to $\cB$ if and only if $a$ is full (in the usual sense) in the \Cs{} $C^*(\cB,a)$ generated by $\cB$ and $a$.
\end{proposition}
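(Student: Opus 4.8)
The plan is to prove the two implications separately, with the forward direction being essentially immediate and the reverse direction carrying all the content. For the forward direction, suppose $a$ is full relatively to $\cB$, so there are $x_1, \dots, x_n \in \cB \subseteq C^*(\cB, a)$ with $\sum_j x_j^* a x_j \ge 1_\cA$. Since $1_\cA$ is the unit of $C^*(\cB, a)$ (as $\cB$ contains $1_\cA$), Lemma~\ref{lm:full} applied inside $C^*(\cB,a)$ immediately gives that $a$ is full in $C^*(\cB, a)$.

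For the reverse direction, suppose $a$ is full in $C^*(\cB, a)$. First, by Lemma~\ref{lm:full} there exist $x_1, \dots, x_n \in C^*(\cB, a)$ with $\sum_j x_j^* a x_j \ge 1_\cA$. The difficulty is that these $x_j$ lie in $C^*(\cB, a)$ rather than in $\cB$; the goal is to push the factors of $a$ out of the $x_j$ and absorb them into the central copy of $a$, at the cost of replacing the lower bound by an inequality in the correct direction. To this end let $\mathcal{W}$ denote the set of words $b_0 a b_1 a \cdots a b_k$ with $k \ge 0$ and $b_0, \dots, b_k \in \cB$; since $a = a^*$ and $\cB$ is a $*$-subalgebra, $\mathrm{span}(\mathcal{W})$ is a dense $*$-subalgebra of $C^*(\cB, a)$. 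Approximating each $x_j$ sufficiently well by an element of $\mathrm{span}(\mathcal{W})$ and rescaling (exactly as in the proof of (ii) $\Rightarrow$ (iii) in Lemma~\ref{lm:rel-full}), we may assume $x_j \in \mathrm{span}(\mathcal{W})$, and then Lemma~\ref{lm:span} lets us further assume that each $x_j$ is a single word $w_j \in \mathcal{W}$.

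The crux is then the following elementary estimate for a single word $w = b_0 a b_1 a \cdots a b_k \in \mathcal{W}$: peeling off the leftmost block and using $b_0^* a b_0 \le \|b_0\|^2 \|a\| \cdot 1_\cA$ together with $a^2 \le \|a\| \, a$ gives
$$ w^* a w = \widetilde w^* \, a\, b_0^* a b_0\, a \,\widetilde w \le \|b_0\|^2 \|a\|^2 \, \widetilde w^* a \widetilde w, \qquad \widetilde w := b_1 a \cdots a b_k, $$
where $\widetilde w$ has one fewer occurrence of $a$. Iterating this reduction $k$ times collapses all interior $a$'s and yields a constant $C_w \ge 0$ and an element $b_w := b_k \in \cB$ with $w^* a w \le C_w\, b_w^* a b_w$. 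Applying this to each $w_j$ and summing, we obtain $1_\cA \le \sum_j w_j^* a w_j \le \sum_j C_j\, b_{w_j}^* a b_{w_j}$; setting $z_j := \sqrt{C_j}\, b_{w_j} \in \cB$ then gives $\sum_j z_j^* a z_j \ge 1_\cA$, which is precisely the statement that $a$ is full relatively to $\cB$.

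I expect the main obstacle to be the word-reduction estimate in the reverse direction: conjugating $a$ by an arbitrary element of $C^*(\cB, a)$ produces many interior factors of $a$, and one must verify that these can all be absorbed into scalars via $b^* a b \le \|b\|^2\|a\|\cdot 1_\cA$ and $a^2 \le \|a\|\, a$ while keeping the inequality pointed in the direction that survives summation into a lower bound. The passage from $\mathrm{span}(\mathcal{W})$ to individual words of $\mathcal{W}$ via Lemma~\ref{lm:span} is essential here, since the reduction estimate is meaningful only for a single word and not for a linear combination of words.
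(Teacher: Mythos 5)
Your proof is correct and takes essentially the same approach as the paper: both pass to the dense spanning set of words $\mathcal{W}$, reduce to single words via Lemma~\ref{lm:span} (packaged in the paper as an application of Lemma~\ref{lm:rel-full}), and then dominate $w^*aw$ by a constant times $b^*ab$ where $b \in \cB$ is the last letter of $w$. The only difference is in how that domination is organized: the paper absorbs the entire prefix in one step, writing $w = vab$ and using $w^*aw = b^*a^{1/2}\big(a^{1/2}v^*ava^{1/2}\big)a^{1/2}b \le \|a^{1/2}v^*ava^{1/2}\|\, b^*ab$, whereas you peel off one letter at a time via $b_0^*ab_0 \le \|b_0\|^2\|a\|\cdot 1_\cA$ and $a^2 \le \|a\|\,a$ --- both rest on the same fact that a positive element is dominated by its norm times the unit.
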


\begin{proof} If $a$ is full relatively to $\cB$, then $a$ is full in $C^*(\cB,a)$ (eg., by Lemma~\ref{lm:full}).

Suppose conversely that $a$ is full in $C^*(\cB,a)$. Let $\mathcal{W}$ be the set of elements in $C^*(\cB,a)$ of the form $w = b_1ab_2 \cdots b_{r-1}ab_r$, with $r \ge 1$ and $b_j \in \cB$. As we may take $b_1$ and/or $b_r$ to be $1_\cB$, we see that $\mathrm{span}(\mathcal{W})$ is dense in $C^*(\cB,a)$. 

By Lemma~\ref{lm:rel-full} there exist $w_1, \dots, w_m \in \mathcal{W}$ such that $\sum_{j=1}^m w_j^*aw_j \ge 1_\cA$. To complete the proof we show that whenever $w = b_1ab_2 \cdots b_{r-1}ab_r \in \mathcal{W}$, with $b_j \in \cB$, then there exists $x \in \cB$ such that $x^*ax \ge w^*aw$. There is nothing to prove if $r =1$. For $r \ge 2$ we may write $w = v a b$, where $v \in \mathcal{W}$ and $b \in \cB$. Now,
$$w^*aw = b^*av^*avab = b^*a^{1/2}\big(a^{1/2}v^*ava^{1/2}\big)a^{1/2}b \le \|a^{1/2}v^*ava^{1/2}\| \, b^*ab = x^*ax,$$
when $x = \|a^{1/2}v^*ava^{1/2}\|^{1/2} b \in \cB$. 
\end{proof}

\noindent
 For the next result, that characterizes $C^*$-irreducible inclusions, recall that a \Cs{} is said to have property (SP) (for small projections) if each non-zero hereditary sub-\Cs{} contains a non-zero projection. Clearly all \Cs s of real rank zero (and hence all von Neumann algebras) have property (SP).
 
 When $a$ is a positive element in a \Cs{} $\cA$ and $\ep >0$, let $(a-\ep)_+$ denote the positive part of the self-adjoint element $a - \ep \, 1_\cA$. 

\begin{proposition} \label{prop:C*-irr} A unital inclusion $\cB \subseteq \cA$ of \Cs s is $C^*$-irreducible if and only if each non-zero positive element $\cA$ is full relatively to $\cB$.

If $\cA$ has property (SP), then it suffices to verify that each non-zero projection in $\cA$ is full relatively to $\cB$.
\end{proposition}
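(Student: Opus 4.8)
The plan is to establish the main equivalence first and then deduce the (SP)-refinement from it. For sufficiency, suppose every non-zero positive element of $\cA$ is full relatively to $\cB$, and let $\cB \subseteq \cD \subseteq \cA$ be an intermediate \Cs{} with a non-zero closed two-sided ideal $J \lhd \cD$. Choosing a non-zero $a \in J \cap \cA^+$ and applying the hypothesis, I obtain $x_1, \dots, x_n \in \cB \subseteq \cD$ with $\sum_j x_j^* a x_j \ge 1_\cA$. Since $a \in J$, $x_j \in \cD$ and $J$ is an ideal of $\cD$, the element $\sum_j x_j^* a x_j$ lies in $J$; being $\ge 1_\cA$ it is invertible in the unital algebra $\cD$, so $J = \cD$. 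Hence every intermediate algebra (in particular $\cA$ and $\cB$ themselves, taking $\cD = \cA$ and $\cD = \cB$) is simple, i.e. the inclusion is $C^*$-irreducible.

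For necessity, assume the inclusion is $C^*$-irreducible and fix a non-zero $a \in \cA^+$. The algebra $\cD := C^*(\cB,a)$ is an intermediate \Cs, hence simple by hypothesis, and it is unital since $1_\cA \in \cB$. In a simple unital \Cs{} every non-zero positive element is full, so $a$ is full in $C^*(\cB,a)$; by Proposition~\ref{prop:rel-full} this is exactly the statement that $a$ is full relatively to $\cB$. This settles the equivalence.

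It remains to treat the case where $\cA$ has property (SP). By the equivalence just proved it suffices to show that, under (SP), relative fullness of all non-zero projections forces relative fullness of every non-zero positive element. Fix $a \in \cA^+$ with $\|a\| = 1$ and some $\ep \in (0,1)$; then $(a-\ep)_+ \ne 0$, and by (SP) the hereditary subalgebra $\overline{(a-\ep)_+ \, \cA \, (a-\ep)_+}$ contains a non-zero projection $p$. The crucial point is to compare $p$ with $a$ by a genuine operator inequality. Writing $e$ for the spectral projection $\chi_{(\ep,1]}(a) \in \cA''$, one has $(a-\ep)_+ = e\,(a-\ep)_+\,e$, so the whole hereditary subalgebra lies under $e$ and therefore $p \le e$; since $ae \ge \ep e$ and $a \ge ae$, this yields
$$ a \; \ge \; a e \; \ge \; \ep\, e \; \ge \; \ep\, p. $$
Now relative fullness of $p$ supplies $x_1, \dots, x_m \in \cB$ with $\sum_j x_j^* p x_j \ge 1_\cA$, and applying the order-preserving maps $y \mapsto x_j^* y x_j$ to $a \ge \ep p$ gives $\sum_j x_j^* a x_j \ge \ep \sum_j x_j^* p x_j \ge \ep\, 1_\cA$. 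After rescaling the $x_j$ (cf.\ Lemma~\ref{lm:rel-full}) this shows $a$ is full relatively to $\cB$, as required.

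The step I expect to be the main obstacle is precisely this last transfer from the projection $p$ to the element $a$. The tempting route — using that $p$ is Cuntz-subequivalent to $a$ to write $p = w^* a w$ and substituting — fails because the resulting multipliers $w x_j$ leave $\cB$, so it only proves that $a$ is full in $\cA$, not relatively to $\cB$. The device that rescues the argument is to place $p$ inside the hereditary subalgebra of $(a-\ep)_+$ rather than of $a$, which upgrades the Cuntz comparison to the honest operator bound $a \ge \ep p$ and thereby lets the \emph{same} elements $x_j \in \cB$ do the work for $a$ that they did for $p$.
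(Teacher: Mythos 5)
Your proof is correct and follows essentially the same route as the paper's: sufficiency by observing that relative fullness makes every non-zero positive element full in any intermediate algebra, necessity via simplicity of $C^*(\cB,a)$ together with Proposition~\ref{prop:rel-full}, and the (SP) case via a projection $p$ in $\overline{(a-\ep)_+\cA(a-\ep)_+}$ and the inequality $a \ge \ep p$. The only difference is that you spell out details the paper leaves implicit (the ideal argument for sufficiency and the spectral-projection verification of $a \ge \ep p$), which is a fine elaboration rather than a different approach.
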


\begin{proof} Take an intermediate \Cs{} $\cB \subseteq \cD \subseteq \cA$. 
If each non-zero positive element $\cA$ is full relatively to $\cB$, then each non-zero positive element of $\cD$ is full in $\cD$, whence $\cD$ is simple. Suppose, conversely, that $\cB \subseteq \cA$ is a $C^*$-irreducible inclusion. Let $a \in \cA$ be a non-zero positive element. Then $a$ is full in the (necessarily simple) intermediate \Cs{} $C^*(\cB,a)$, so $a$ is full relatively to $\cB$ by Proposition~\ref{prop:rel-full}.

Suppose finally that $\cA$ has property (SP). Let $a$ be a non-zero positive element in $\cA$ and choose $0 < \ep < \|a\|$. Then $(a-\ep)_+$ is non-zero and positive, so the hereditary sub-\Cs{} $\overline{(a-\ep)_+\cA (a-\ep)_+}$ contains a non-zero projection $p$, which by assumption is full relatively to $\cB$. As $a \ge \ep p$, it follows that $a$ is full relatively to $\cB$.
\end{proof}

\begin{remark} \label{rem:irr}
If $\cB \subseteq \cA$ is a $C^*$-irreducible inclusion of \Cs s, then 
$\cB' \cap \cA = \C$, i.e., the inclusion is irreducible. Indeed, if $\cB' \cap \cA \ne \C$ and if $a$ is a positive non-zero and non-invertible element in $\cB' \cap \cA$, then $a$ is not full in $C^*(\cB,a)$, cf.\ Proposition~\ref{prop:rel-full} and Lemma~\ref{lm:rel-full}, and $C^*(\cB,a)$ is therefore is a non-simple intermediate \Cs. (More generally,  $C^*(\cB,a)$ is non-simple whenever $a \in \cB' \cap \cA$ is non-zero and non-invertible.) 

In particular, if $\mathcal{E}$ and $\cB$ are unital simple \Cs s with $\cB \ne \C$, then $\mathcal{E} \otimes \C \subseteq \mathcal{E} \otimes \cB$ is not irreducible and hence not $C^*$-irreducible. Also, there are no non-trivial $C^*$-irrecucible inclusions of finite dimensional \Cs s.

The converse does not hold, even under the additional (necessary) assumption that $\cA$ and $\cB$ are simple, see Example~\ref{ex:irr-nirr}. For another example,  any irreducible inclusion $\mathcal{N} \subseteq \mathcal{M}$ of II$_1$ factors with infinite Jones index fails to be $C^*$-irreducible, cf.\ Theorem~\ref{thm:vNirr} below.
\end{remark}

\begin{remark} Neither irreducibility nor $C^*$-irreducibility are ``transitive'' in the sense that  if $\mathcal{C} \subseteq \cB \subseteq \cA$ are unital inclusions of \Cs s such that $\mathcal{C} \subseteq \cB$ and $\cB \subseteq \cA$ are irreducible, respectively, $C^*$-irreducible, then one cannot conclude that $\mathcal{C} \subseteq \cA$ has the same property.

Consider, for an example, an irreducible inclusion $\mathcal{N} \subseteq \mathcal{M}$ of II$_1$ factors with finite Jones index, and let $\mathcal{M}_1 = (\mathcal{M} \cup \{e_N\})''$ be the standard construction of Jones. Then $\mathcal{M} \subseteq \mathcal{M}_1$ is irreducible and $[\mathcal{M}_1:\mathcal{M}] = [\mathcal{M} : \mathcal{N}]<\infty$. Hence $\mathcal{N} \subseteq \mathcal{M}$  and $\mathcal{M} \subseteq \mathcal{M}_1$  are both $C^*$-irreducible by Theorem~\ref{thm:vNirr}, but $\mathcal{N}' \cap \mathcal{M}_1 \ne \C 1$ (since $e_N \in \mathcal{N}' \cap \mathcal{M}_1$), so $\mathcal{N} \subseteq \mathcal{M}_1$ is not irreducible, and hence not $C^*$-irreducible.

Here is another example: Take a unital simple \Cs{} $\cB$ and an outer action $\alpha$ of a cyclic group $\Z_d$ on $\cB$. Let $\widehat{\alpha}$ be the dual action of $\widehat{\Z/d}$ on $\cB \rtimes_\alpha \Z_d$. Then $\cB \subseteq \cB \rtimes_\alpha \Z_d$ and $\cB \rtimes_\alpha \Z_d \subseteq (\cB \rtimes_\alpha \Z_d) \rtimes_{\widehat{\alpha}} \widehat{\Z_d}$ are $C^*$-irreducible by Theorem~\ref{thm:crossed-product}. However, by Takai duality the inclusion $\cB \subseteq (\cB \rtimes_\alpha \Z_d) \rtimes_{\widehat{\alpha}} \widehat{\Z_d}$ is conjugate to the inclusion $\cB \otimes 1_d \subseteq \cB \otimes M_d$, which is not $C^*$-irreducible, cf.\ Remark~\ref{rem:irr} above. 
\end{remark}

\noindent The proof of the second part of the lemma below is essentially identical with the proof of \cite[Proposition 2.2]{Ror:UHFII}. 

\begin{lemma} \label{lm:C*-irr} Let $\cB \subseteq \cA$ an inclusion of unital \Cs s. Suppose that $\cB$ is simple, and let $a \in \cA$ be positive. Then $a$ is full in $\cB$ if there exist $x \in \cB$ and a positive non-zero element $b \in \cB$ such that either $b \le x^*ax$ or $\|b-x^*ax\| < \|b\|$.
\end{lemma}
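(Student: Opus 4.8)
The plan is to show that $a$ is full relatively to $\cB$, i.e.\ to produce $z_1, \dots, z_n \in \cB$ with $\sum_{j=1}^n z_j^* a z_j \ge 1_\cA$; this is what the conclusion ``full in $\cB$'' must mean here, since $a$ need not lie in $\cB$. I would treat the two hypotheses separately, first disposing of the order hypothesis $b \le x^*ax$ and then reducing the norm hypothesis $\|b - x^*ax\| < \|b\|$ to it.

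For the first case, assume $b \le x^*ax$ with $x \in \cB$ and $b \in \cB^+ \setminus \{0\}$. Since $\cB$ is simple, $b$ is full in $\cB$, so by Lemma~\ref{lm:full} applied inside $\cB$ there are $y_1, \dots, y_n \in \cB$ with $\sum_{j=1}^n y_j^* b y_j \ge 1_\cB = 1_\cA$. Conjugating $b \le x^*ax$ by each $y_j$ and summing, I obtain
\[
1_\cA \;\le\; \sum_{j=1}^n y_j^* b y_j \;\le\; \sum_{j=1}^n y_j^* x^* a x y_j \;=\; \sum_{j=1}^n (xy_j)^* a (xy_j),
\]
so with $z_j = xy_j \in \cB$ the element $a$ is full relatively to $\cB$.

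For the second case, suppose only $\ep := \|b - x^*ax\| < \|b\|$, and set $c = x^*ax$. Self-adjointness of $b-c$ gives $c \ge b - \ep \cdot 1_\cA$. The idea is to conjugate this inequality by an element of $\cB$ manufactured from $b$ so as to kill the part of the spectrum of $b$ below $\ep$, where the estimate carries no positivity. Taking $y = (b-\ep)_+^{1/2} \in \cB$ and using that $y$ commutes with $b$, continuous functional calculus gives
\[
y^* c y \;\ge\; y^*(b - \ep \cdot 1_\cA)y \;=\; (b-\ep)_+^{1/2}\,(b - \ep\cdot 1_\cA)\,(b-\ep)_+^{1/2} \;=\; (b-\ep)_+^2 \;=:\; b'.
\]
Because $\ep < \|b\|$ and $\|b\| \in \mathrm{sp}(b)$, the element $b' = (b-\ep)_+^2$ is a non-zero positive element of $\cB$. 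Since $y^* c y = (xy)^* a (xy)$ with $xy \in \cB$, this exhibits $z = xy \in \cB$ and a non-zero $b' \in \cB^+$ with $b' \le z^* a z$, which is precisely the situation of the first case, and we are done.

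The only genuine obstacle is the second case: converting the metric estimate $\|b - x^*ax\| < \|b\|$ into an honest order domination $b' \le z^* a z$ by a non-zero positive element of $\cB$. The key point is that cutting down by the spectral function $(b-\ep)_+^{1/2}$ exactly annihilates the spectral part of $b$ below $\ep$—where $c \ge b - \ep \cdot 1_\cA$ gives nothing—while $\ep < \|b\|$ guarantees that the surviving element $(b-\ep)_+^2$ is non-zero. Everything else is order-preservation under conjugation together with the use of simplicity of $\cB$ via Lemma~\ref{lm:full}.
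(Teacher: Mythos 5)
Your proof is correct and takes essentially the same approach as the paper: the order case is settled by simplicity of $\cB$ (fullness of $b$), and the norm case is reduced to the order case by conjugating $x^*ax \ge b - \ep \cdot 1_\cA$ by a spectral cut-down of $b$, producing a non-zero positive element of $\cB$ dominated by $(xy)^*a(xy)$. The only (cosmetic) difference is your choice of cutting function: you use $(b-\ep)_+^{1/2}$ with $\ep = \|b-x^*ax\|$, whereas the paper conjugates by $\varphi(b)$ for a continuous function $\varphi$ vanishing on $[0,\delta]$ and equal to $1$ on $[\ep,\infty)$, with $\delta = \|b-x^*ax\| < \ep < \|b\|$.
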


\begin{proof} Note first that $b$ is full in $\cB$ by simplicity of $\cB$, so if $b \le x^*ax$, then $x^*ax$, and hence $a$, are full relatively to $\cB$. 

Suppose that $\|b-x^*ax\| < \|b\|$.  Set $\delta= \|b-x^*ax\|$, and choose $\delta < \ep < \|b\|$. Choose a continuous function $\varphi \colon \R^+ \to \R^+$ which is zero on $[0,\delta]$ and $1$ on $[\ep,\infty)$. Then
$$(b-\ep)_+ = \varphi(b) (b-\ep)_+ \varphi(b) \le \varphi(b)(b - \delta \cdot 1_\cA)\varphi(b) \le \varphi(b)x^*ax\varphi(b).$$
The latter claim now follows from the former.
\end{proof}

\noindent We conclude this section by reviewing some related properties of unital inclusions of \Cs s:

\begin{definition} \label{def:RDP} A unital inclusion $\cB \subseteq \cA$ of \Cs s is said to have the \emph{relative Dixmier property} if $C_\cB(a) \cap \C \! \cdot \! 1_\cB \ne \emptyset$, for all $a \in \cA$, where $C_\cB(a)$ denotes the closure of the convex hull of $\{u^*au : u \in \cU(\cB)\}$.
\end{definition}

\noindent 
It was shown in \cite{Haagerup-Zsido} that a  unital simple \Cs{} $\cA$ has the Dixmier property (i.e., $C_\cA(a) \cap \C \! \cdot \! 1_\cA \ne \emptyset$, for all $a \in \cA$), if and only if $\cA$ has at most one tracial state. 

Clearly, if  $\cB \subseteq \cA$ has the relative Dixmier property, then both $\cA$ and $\cB$ have the Dixmier property. Moreover,  if $\cA$ has a tracial state $\tau$, then this trace must be unique, its restriction to $\cB$ is a unique trace on $\cB$, and  $C_\cB(a) \cap \C \! \cdot \! 1_\cB = \{\tau(a) \! \cdot \! 1_\cB\}$, for all $a \in \cA$.

Popa proved in \cite[Theorem 2.1]{Popa:JFA2000} that $\cB \subseteq \cA$ has the relative Dixmier property if (i) $\cB$ has the Diximier property, (ii) the inclusion has finite index with respect to some conditional expectation $E \colon \cA \to \cB$ (i.e., there exists $\lambda >0$ such that $E(a) \ge \lambda a$, for all $a \in \cA^+$), and (iii) $\pi_\varphi(\cB)' \cap \pi_\varphi(\cA)'' = \C$, for some state $\varphi$ on $\cA$ (and where $\pi_\varphi$ is the associated GNS representation). Condition (i) is also necessary, but condition (ii) is not (see \cite[Corollary 4.1]{Popa:JFA2000} and also Theorem~\ref{thm:crossed-product}). If $\cB \subseteq \cA$ has the relative Dixmier property, then $\cB' \cap \cA = \C$, which is weaker than condition (iii).

Observe that $\cB \subseteq \cA$ has the relative Dixmier property if and only if $C_\cB(a) \cap \cB \ne \emptyset$, for all $a \in \cA$, and $\cB$ has the Dixmier property. Neither of these two properties hold in general for $C^*$-irreducible inclusions $\cB \subseteq \cA$. Indeed, one can construct $C^*$-irreducible inclusions $\cB \subseteq \cA$ where $\cB$ has unique trace and $\cA$ has more than one trace (use, e.g., Theorem~\ref{thm:AK} or Theorem~\ref{thm:crossed-product}), and for such inclusions we must have $C_\cB(a) \cap \cB \ne \emptyset$, for some $a \in \cA$; and one can use Theorem~\ref{thm:crossed-product} to construct the inclusion such that  $\cB$ has more than one trace, in which case $\cB$ does not have the Dixmier property.
It follows easily from Lemma~\ref{lm:rel-full} and Proposition~\ref{prop:C*-irr} that $\cB \subseteq \cA$ is $C^*$-irreducible if and only if $C_\cB(a)$ contains an invertible element (in $\cA$) for each non-zero positive element $a \in \cA$. This proves the following:

\begin{proposition} \label{prop:RDP-a}
A unital inclusion $\cB \subseteq \cA$ of \Cs s is $C^*$-irreducible if it
has the relative Dixmier property and $\cA$ has a faithful tracial state.
\end{proposition}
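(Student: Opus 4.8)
The plan is to combine the criterion recorded immediately before the proposition --- that, by Lemma~\ref{lm:rel-full} and Proposition~\ref{prop:C*-irr}, the inclusion $\cB \subseteq \cA$ is $C^*$-irreducible if and only if $C_\cB(a)$ contains an element invertible in $\cA$ for every non-zero positive $a \in \cA$ --- with the observation that a faithful trace both pins down the scalar delivered by the relative Dixmier property and forces it to be strictly positive. So the whole task reduces to producing, for each non-zero positive $a$, an invertible element of $\cA$ inside $C_\cB(a)$.

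First I would fix a non-zero positive element $a \in \cA$ and invoke the relative Dixmier property to obtain a scalar $\lambda \in \C$ with $\lambda \cdot 1_\cB \in C_\cB(a)$. The next step is to identify $\lambda$ using the faithful tracial state $\tau$ on $\cA$. Since $\tau$ is a trace, $\tau(u^*au) = \tau(a)$ for every $u \in \cU(\cB)$, so $\tau$ is constant equal to $\tau(a)$ on $\{u^*au : u \in \cU(\cB)\}$; by linearity and norm-continuity $\tau(c) = \tau(a)$ for every $c \in C_\cB(a)$. Evaluating at $c = \lambda \cdot 1_\cB$ and using that $\tau$ is a state (so $\tau(1_\cA)=1$, where $1_\cB = 1_\cA$ by unitality) gives $\lambda = \tau(a)$; this is exactly the identity $C_\cB(a) \cap \C \cdot 1_\cB = \{\tau(a) \cdot 1_\cB\}$ noted earlier in this section.

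The final step uses faithfulness: as $a$ is positive and non-zero, $\tau(a) > 0$, so $\tau(a) \cdot 1_\cA$ is an invertible element of $\cA$ lying in $C_\cB(a)$. Since $a$ was an arbitrary non-zero positive element, the quoted criterion yields $C^*$-irreducibility. I do not expect a genuine obstacle here, as the analytic content is already absorbed into Lemma~\ref{lm:rel-full} and Proposition~\ref{prop:C*-irr}; the only point needing care is to check that the scalar supplied by the relative Dixmier property is the correct one, and this is precisely where the trace enters --- traciality fixes its value as $\tau(a)$ and faithfulness makes it non-zero, hence invertible.
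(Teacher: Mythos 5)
Your proof is correct and follows the same route as the paper: the paper derives this proposition directly from the criterion that $C^*$-irreducibility is equivalent to $C_\cB(a)$ containing an invertible element of $\cA$ for every non-zero positive $a$, together with the observation (made just before Proposition~\ref{prop:RDP-a}) that under the relative Dixmier property a tracial state $\tau$ forces $C_\cB(a) \cap \C \cdot 1_\cB = \{\tau(a)\cdot 1_\cB\}$, so faithfulness yields the invertible element $\tau(a)\cdot 1_\cA$. Your identification of the scalar via traciality and continuity of $\tau$ on the closed convex hull is exactly the intended argument.
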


\begin{definition} \label{def:PP} 
A unital inclusion $\cB \subseteq \cA$ of \Cs s with a conditional expectation $E \colon \cA \to \cB$ is said to have the \emph{pinching property} if for each non-zero positive element $a \in \cA$ and each $\ep >0$ there exists a contraction $h \in \cB$ such that $\|h^*ah - h^*E(a)h\| \le \ep$ and $\|h^*E(a)h\| \ge \|E(a)\| -\ep$.
\end{definition}

\begin{definition} \label{def:REP}
A unital inclusion $\cB \subseteq \cA$ of \Cs s is said to have the \emph{relative excision property} with respect to a state $\psi$ on $\cA$ if there is a net $\{h_\alpha\}$ of positive elements in $\cB$ satisfying  $\|h_\alpha\|=1$ and $\lim_\alpha \|h_\alpha^{1/2} a h_\alpha^{1/2} - \psi(a) h_\alpha\| = 0$, for all $a \in \cA$.
\end{definition}

\noindent Kwasniewski and Meyer consider in  \cite{Kwas-Meyer:aperiodic}  a related version of the pinching property  leading to their notion of \emph{aperiodic inclusions}, meaning an inclusion $\cB \subseteq \cA$ of \Cs s for which for each $a \in \cA$, each non-zero hereditary sub-\Cs{} $H$ of $\cA$, and each $\ep >0$ there exists 
$x \in H^+$ with $\|x\|=1$ and $b \in \cB$ such that $\|xax-b\|<\ep$. They show, under some additional assumption on the inclusion, that aperiodicity of $\cB \subseteq \cA$ implies that $\cB$ ``detects ideals'' of all intermediate \Cs s of the inclusions, which in particular implies $C^*$-irreducibility when $\cB$ is simple, see \cite[Theorems 7.2 and 7.3]{Kwas-Meyer:aperiodic}. 

Definition~\ref{def:REP} is a relative version of the usual excision property for a state $\psi$ on a \Cs{} $\cA$, in which the net $\{h_\alpha\}$ resides in $\cA$.  The excision property is known to hold for all pure states, and more generally for all states in the weak$^*$ closure of the pure states, and hence for all states if the \Cs{} is antiliminal, see \cite{AkeAndPed:excising}.

The relative excision property behaves well with respect to tensor products, see Proposition~\ref{prop:tensorIII}, and holds in some naturally occuring cases, see, e.g., Lemma~\ref{lm:crossed-product}. It would be interesting to understand which states satisfy the relative excision property with respect to a given inclusion $\cB \subseteq \cA$.

\begin{proposition} \label{prop:M}
Let $\cB \subseteq \cA$ be a unital inclusion of \Cs s which either has the pinching property with  respect to some faithful conditional expectation $E \colon \cA \to \cB$, or has the relative excision property with respect to a faithful state $\psi$ on $\cA$. If, in addition, $\cB$ is simple, then $\cB \subseteq \cA$ is $C^*$-irreducible.
\end{proposition}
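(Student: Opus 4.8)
The plan is to verify, in both cases, the intrinsic criterion of Proposition~\ref{prop:C*-irr}: since $\cB$ is simple, it suffices to show that every non-zero positive element $a \in \cA$ is full relatively to $\cB$. The workhorse will be Lemma~\ref{lm:C*-irr}: because $\cB$ is simple, it is enough to produce an element $x \in \cB$ together with a non-zero positive $b \in \cB$ satisfying $\|b - x^*ax\| < \|b\|$. In each case the role of $b$ will be played by a ``pinched'' version of the image of $a$ under the expectation (resp.\ the state), and the faithfulness hypothesis is precisely what guarantees that this image is non-zero.

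First I would treat the pinching case. Fix a non-zero positive $a \in \cA$. Since $E$ is faithful and $a \ge 0$ is non-zero, $E(a)$ is a non-zero positive element of $\cB$, so $\|E(a)\| > 0$. Choose $\ep$ with $0 < \ep < \|E(a)\|/2$ and apply the pinching property to obtain a contraction $h \in \cB$ with $\|h^*ah - h^*E(a)h\| \le \ep$ and $\|h^*E(a)h\| \ge \|E(a)\| - \ep$. Set $x = h$ and $b = h^*E(a)h$; then $b \in \cB$ is positive, $\|b\| \ge \|E(a)\| - \ep > 0$ (so $b \ne 0$), and $\|b - x^*ax\| = \|h^*E(a)h - h^*ah\| \le \ep < \|E(a)\| - \ep \le \|b\|$. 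Lemma~\ref{lm:C*-irr} then shows that $a$ is full relatively to $\cB$.

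The relative excision case is analogous. For a non-zero positive $a \in \cA$, faithfulness of $\psi$ gives $\psi(a) > 0$. Using the defining net $\{h_\alpha\}$, choose $\alpha$ so that $\|h_\alpha^{1/2} a h_\alpha^{1/2} - \psi(a) h_\alpha\| < \psi(a)$, and put $x = h_\alpha^{1/2} \in \cB$ and $b = \psi(a) h_\alpha \in \cB$. Then $b$ is positive with $\|b\| = \psi(a)\|h_\alpha\| = \psi(a)$, and $\|b - x^*ax\| = \|\psi(a) h_\alpha - h_\alpha^{1/2} a h_\alpha^{1/2}\| < \psi(a) = \|b\|$, so again Lemma~\ref{lm:C*-irr} applies and $a$ is full relatively to $\cB$. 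In both cases Proposition~\ref{prop:C*-irr} yields $C^*$-irreducibility.

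There is no serious obstacle here: once Proposition~\ref{prop:C*-irr} and Lemma~\ref{lm:C*-irr} are in place, the argument is essentially bookkeeping of the approximation parameter, and the only substantive input is that faithfulness of $E$ (resp.\ $\psi$) forces $E(a) \ne 0$ (resp.\ $\psi(a) > 0$) whenever $a$ is a non-zero positive element. The mild point to watch is to choose the tolerance strictly below $\|E(a)\|/2$ (resp.\ below $\psi(a)$), so that the strict inequality $\|b - x^*ax\| < \|b\|$ demanded by Lemma~\ref{lm:C*-irr} is genuinely attained rather than merely approached.
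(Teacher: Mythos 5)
Your proof is correct and follows essentially the same route as the paper's: both cases reduce to Lemma~\ref{lm:C*-irr} with the choices $b = h^*E(a)h$ (resp.\ $b = \psi(a)h_\alpha$), using faithfulness of $E$ (resp.\ $\psi$) to ensure $b \ne 0$, and then invoke Proposition~\ref{prop:C*-irr}. Your explicit bookkeeping of the tolerance $\ep < \|E(a)\|/2$ is exactly what the paper's inequality $\|h^*ah - h^*E(a)h\| < \tfrac12\|E(a)\| \le \|h^*E(a)h\|$ encodes implicitly.
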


\begin{proof} Assume first $\cB \subseteq \cA$ has the pinching property with respect to a faithful conditional expectation $E \colon \cA \to \cB$. Let $a \in \cA$ be non-zero and positive. Then we can find a contraction $h \in \cB$ such that $\|h^*ah - h^*E(a)h\| < \frac12 \|E(a)\| \le \|h^*E(a)h\|$. Hence $a$ is full relatively to $\cB$, by Lemma~\ref{lm:C*-irr}.

Assuming instead that $\cB \subseteq \cA$ has the relative excision property, for $a \in \cA$ non-zero and positive, take $h \in \cB^+$ with $\|h\|=1$ such that $\|h^{1/2} a h^{1/2} - \psi(a) h\| <  \|\psi(a)h\|$. Then, again by Lemma~\ref{lm:C*-irr}, we conclude that $a$ is full relatively to $\cB$.
\end{proof}

\noindent Izumi proved in \cite[Theorem 3.3]{Izumi:Crelle2002} that if $\cB \subseteq \cA$ is a unital inclusion of \Cs s with a conditional expectation $E \colon \cA \to \cB$ of finite index, and if $\cA$, respectively, $\cB$  is simple, then $\cB$, respectively, $\cA$, is a finite direct sum of simple \Cs s. In particular, if the inclusion $\cB \subseteq \cA$ is irreducible, then $\cA$ is simple if and only if $\cB$ is simple. This proves the following:

\begin{corollary}[Izumi] A unital  inclusion $\cB \subseteq \cA$  of \Cs s of finite index with respect to some conditional expectation $E \colon \cA \to \cB$ is $C^*$-irreducible if and only if it is irreducible.
\end{corollary}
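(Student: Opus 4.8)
The plan is to treat the two implications separately; the forward one is immediate, while the reverse one carries all the content. For the forward implication no finite-index hypothesis is needed: if $\cB \subseteq \cA$ is $C^*$-irreducible then it is irreducible by Remark~\ref{rem:irr}.

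For the reverse implication I would work within the standing assumption that $\cA$ and $\cB$ are simple (this is built into the very notion of $C^*$-irreducibility), and suppose in addition that the inclusion is irreducible and of finite index with respect to $E \colon \cA \to \cB$. Fix an arbitrary intermediate \Cs{} $\cB \subseteq \cD \subseteq \cA$; the goal is to show that $\cD$ is simple. The idea is to push both hypotheses down to the sub-inclusion $\cB \subseteq \cD$ and then quote Izumi's theorem recalled just above the corollary. Concretely, the restriction $E|_\cD \colon \cD \to \cB$ is again a conditional expectation: its range $\cB$ sits inside $\cD$ and $E$ fixes $\cB$ pointwise. Moreover it has finite index with the same constant, since the inequality $E(a) \ge \lambda a$ for all $a \in \cA^+$ restricts verbatim to all $a \in \cD^+$. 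Irreducibility likewise descends, because $\cB' \cap \cD \subseteq \cB' \cap \cA = \C$.

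With these two observations in hand, Izumi's result \cite[Theorem 3.3]{Izumi:Crelle2002} applies to the finite-index inclusion $\cB \subseteq \cD$: as $\cB$ is simple, $\cD$ is a finite direct sum of simple \Cs s, and as $\cB' \cap \cD = \C$ the center of $\cD$, being contained in $\cB' \cap \cD$, is trivial, forcing $\cD$ to consist of a single simple summand. Since $\cD$ was an arbitrary intermediate algebra (with $\cD = \cA$ recovering simplicity of $\cA$), the inclusion $\cB \subseteq \cA$ is $C^*$-irreducible.

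I expect the only point requiring care to be the bookkeeping that both finite index and irreducibility are inherited by $\cB \subseteq \cD$; once this is verified, the conclusion is a direct appeal to Izumi's structural theorem. In particular, none of the relative-fullness machinery of Proposition~\ref{prop:C*-irr} is needed here, as Izumi's result does the heavy lifting at the level of each intermediate algebra directly.
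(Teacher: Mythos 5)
Your proof is correct and is essentially the paper's own (implicit) argument: the forward direction is Remark~\ref{rem:irr}, and the reverse direction applies Izumi's theorem to each intermediate inclusion $\cB \subseteq \cD$ after observing that the finite-index conditional expectation restricts to $\cD$ and that $\cB' \cap \cD \subseteq \cB' \cap \cA = \C$ collapses the finite direct sum of simple summands to a single one. The paper compresses all of this into ``This proves the following''; you have merely made the bookkeeping explicit.
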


\section{Irreducible inclusions of von Neumann algebras} \label{sec:vNalg}

\noindent An inclusion $\cN \subseteq \cM$ of von Neumann factors is \emph{irreducible} if $\cN' \cap \cM = \C$. Since $\cP' \cap \cP \subseteq \cN' \cap \cM = \C$ for each intermediate von Neumann algebra $\cN \subseteq \cP \subseteq \cM$, each such von Neumann algebra $\cP$ is a factor. This analogy with $C^*$-irreducible inclusions goes further as shown in the remark below and in Theorem~\ref{thm:vNirr}.

\begin{remark}
An inclusion $\cN \subseteq \cM$ of von Neumann factors is irreducible if and only if $\bigvee_{u \in \cU(\cN)}  u^*pu = 1$ for each non-zero projection $p \in \cM$. Indeed, $\bigvee_{u \in \cU(\cN)}  u^*pu$ is easily seen to belong to $\cN' \cap \cM$; and if $p \in \cN' \cap \cM$, then $\bigvee_{u \in \cU(\cN)}  u^*pu = p$.

By Proposition~\ref{prop:C*-irr} and Lemma~\ref{lm:rel-full}, $\cN \subseteq \cM$ is $C^*$-irreducible if and only if for each non-zero projection $p \in \cM$ there exist finitely many unitaries $u_1, \dots, u_n \in \cN$ such that $\sum_{j=1}^n u_j^*pu_j \ge 1$. Now,  $\sum_{j=1}^n u_j^*pu_j \ge 1$ implies $\bigvee_{j=1}^n u_j^*pu_j = 1$, while $\bigvee_{j=1}^n u_j^*pu_j = 1$  does not even imply that $\sum_{j=1}^n u_j^*pu_j$ is invertible.  Nonetheless, the following question may still have a positive answer.
\end{remark}

\begin{question} Is an inclusion $\cN \subseteq \cM$ of von Neumann factors $C^*$-irreducible if and only if for each non-zero projection $p \in \cM$ there exist finitely many unitaries $u_1, \dots, u_n \in \cN$ such that $\bigvee_{j=1}^n u_j^*pu_j = 1$?
\end{question}

\begin{definition} A state $\phi$ on a von Neumann algebra $\cM$ is \emph{singular} if for each non-zero projection $p \in \cM$ there exists a non-zero projection $q \le p$ in $\cM$ such that $\phi(q)=0$.
\end{definition}

\noindent
A maximality argument shows that if $\phi$ is a singular state on $\cM$, then, for each projection $p \in \cM$, there is a family $(p_i)_{i \in I}$ of pairwise orthogonal non-zero projections in $\cM$ satisfying $p = \sum_{i \in I} p_i$ and $\phi(p_i)=0$, for all $i \in I$. Hence the restriction of $\phi$ to each corner $p\cM p$ is non-normal.

The theorem below is essentially a restatement of the main result from Popa's paper \cite{Popa:ENS1999}. 
I thank Adrian Ioana for pointing this out to me and also for explaining how to obtain  (i) $\Rightarrow$ (iii)  via results of Pop, \cite{FPop:PAMS1998}, and Popa, \cite{Popa:ENS1999}.

\begin{theorem}[Popa] \label{thm:vNirr}
The following conditions are equivalent for any inclusion $\cN \subseteq \cM$ of  II$_1$-factors with separable predual.
\begin{enumerate}
\item $\cN \subseteq \cM$ is $C^*$-irreducible,
\item $\cN \subseteq \cM$ has the relative Dixmier property,
\item $\cN \subseteq \cM$ is irreducible with finite Jones index.
\end{enumerate}
\end{theorem}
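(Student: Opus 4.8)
The plan is to close the cycle (iii) $\Rightarrow$ (ii) $\Rightarrow$ (i) $\Rightarrow$ (iii). The implication (ii) $\Rightarrow$ (i) is immediate: a II$_1$-factor carries a unique, hence faithful, tracial state, so by Proposition~\ref{prop:RDP-a} the relative Dixmier property already forces $C^*$-irreducibility. The implication (iii) $\Rightarrow$ (ii) should likewise come cheaply from a result of Popa, while (i) $\Rightarrow$ (iii) is where the real difficulty, and the external input of Pop and Popa, enters.

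For (iii) $\Rightarrow$ (ii) I would invoke Popa's theorem \cite[Theorem~2.1]{Popa:JFA2000} and verify its three hypotheses. Being a II$_1$-factor, $\cN$ has a unique tracial state and therefore the Dixmier property by Haagerup--Zsido, cf.\ \cite{Haagerup-Zsido}. Finite Jones index supplies the trace-preserving conditional expectation $E \colon \cM \to \cN$ together with the Pimsner--Popa bound $E(a) \ge [\cM:\cN]^{-1} a$ for $a \in \cM^+$, which is exactly finiteness of the index of $E$. For the remaining hypothesis I would take $\varphi = \tau$, so that the GNS representation is the standard form on $L^2(\cM,\tau)$ and $\pi_\tau(\cN)' \cap \pi_\tau(\cM)'' = \cN' \cap \cM = \C$ by irreducibility. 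Popa's theorem then delivers the relative Dixmier property.

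The remaining implication (i) $\Rightarrow$ (iii) is the main obstacle. Irreducibility, $\cN' \cap \cM = \C$, is automatic from Remark~\ref{rem:irr} and makes no reference to the index, so the entire content is finiteness of the Jones index. Here I would first reformulate the hypothesis: since the factor $\cM$ has property~(SP), Proposition~\ref{prop:C*-irr} together with Lemma~\ref{lm:rel-full} (as recorded in the remark preceding the theorem) shows that $C^*$-irreducibility is equivalent to the strong averaging condition that every non-zero projection $p \in \cM$ admits finitely many unitaries $u_1, \dots, u_n \in \cN$ with $\sum_{j=1}^n u_j^* p u_j \ge 1$. The task is then to show that this order-domination can hold for all projections only when the index is finite; equivalently, in contrapositive form, that an irreducible inclusion of infinite Jones index always possesses a projection which cannot be averaged by $\cN$-unitaries up to dominating $1_\cM$. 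This is precisely where the averaging results of Pop \cite{FPop:PAMS1998} and the index estimates of Popa \cite{Popa:ENS1999} are needed: one natural route is to first upgrade the order-averaging to the (norm) relative Dixmier property via Pop's machinery and then apply Popa's characterization of finite index for irreducible inclusions. The hard point is exactly this conversion — the hypothesis only controls averages in the order sense, whereas extracting a finite index bound requires turning that into the quantitative, Pimsner--Popa-type control of $E$ that detects the index.
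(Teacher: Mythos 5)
Your implications (ii) $\Rightarrow$ (i) and (iii) $\Rightarrow$ (ii) are in order. The first is exactly the paper's argument via Proposition~\ref{prop:RDP-a}. For the second, your verification of the three hypotheses of \cite[Theorem 2.1]{Popa:JFA2000} (Dixmier property of $\cN$ via Haagerup--Zsid\'o, the Pimsner--Popa bound $E(a) \ge [\cM:\cN]^{-1}a$ for the trace-preserving expectation, and $\pi_\tau(\cN)' \cap \pi_\tau(\cM)'' = \cN' \cap \cM = \C$ in standard form) is a legitimate route; the paper instead simply quotes the equivalence of (ii) and (iii) from \cite[Theorem 2.1]{Popa:ENS1999}, which is a von Neumann algebra statement tailored to exactly this situation.

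The genuine gap is (i) $\Rightarrow$ (iii), which you correctly identify as the hard step but then leave as a black box, and the route you sketch --- first upgrading the order-averaging to the norm relative Dixmier property ``via Pop's machinery'' and then applying Popa --- is not workable as stated: there is no known direct passage from $C^*$-irreducibility to the relative Dixmier property that does not already presuppose finite index (this is precisely why Popa's \cite[Theorem 2.1]{Popa:JFA2000} cannot be invoked here, and why the analogous implication in Theorem~\ref{thm:C*-group} is only proved under a finite index hypothesis). The missing idea is to argue through \emph{singular states}, and the argument runs in the contrapositive on the index alone, never passing through (ii). Popa shows in \cite{Popa:ENS1999} (p.~763) that if $[\cM:\cN]=\infty$, then there exists a singular state $\phi$ on $\cM$ extending the trace on $\cN$ (built from positive elements $a_n$ with $E_\cN(a_n)=1_\cN$ and $\tau(s(a_n)) \le 2^{-n}$). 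By Pop, \cite[Proposition 3.3]{FPop:PAMS1998}, such a $\phi$ yields a singular positive $\cN$-bimodular map $E \colon \cM \to L^2(\cN,\tau)$ extending the identity on $\cN$, and singularity produces a non-zero projection $e \in \cM$ with $E(e)=0$. This $e$ is exactly the witness you were looking for: if $\cN \subseteq \cM$ were $C^*$-irreducible, there would be unitaries $u_1,\dots,u_n \in \cN$ with $\sum_{j=1}^n u_j^* e u_j \ge 1_\cM$, and applying $E$ and using bimodularity gives the contradiction $1_\cM = E(1_\cM) \le \sum_{j=1}^n u_j^* E(e) u_j = 0$. Hence $C^*$-irreducibility rules out singular trace-extensions, which by Popa's result forces $[\cM:\cN]<\infty$. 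Your reformulation of the hypothesis (every non-zero projection is dominated by a finite $\cN$-unitary average) is the right one; what is missing is that singularity, not a quantitative Pimsner--Popa estimate, is the mechanism that converts infinite index into a projection that cannot be so averaged.
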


\begin{proof} The equivalence of (ii) and (iii) is \cite[Theorem 2.1]{Popa:ENS1999} by Popa. 

 (i) $\Rightarrow$ (iii).
 Suppose that $\cN \subseteq \cM$ is $C^*$-irreducible. Then $\cN' \cap \cM = \C$, as observed in Remark~\ref{rem:irr}. We must show that $[\cM:\cN] < \infty$.
 
 Let $\tau$ denote the tracial state on $\cM$ and on $\cN$. Following the proof of Pop, \cite[Proposition 3.3]{FPop:PAMS1998}, we first show that there is no singular state $\phi$ on $\cM$ which extends the trace on $\cN$. Indeed, as shown in \cite[Proposition 3.3]{FPop:PAMS1998}, if such a state $\phi$ exists, then there is a singular positive $\cN$-bimodular map $E \colon \cM \to L^2(\cN,\tau)$ extending the identity map on $\cN$.  Being singular implies that there exists a non-zero projection $e \in \cM$ with $E(e)=0$.  Since $\cN \subseteq \cM$ is $C^*$-irreducible we can find unitaries $u_1, \dots, u_n$ in $\cN$ such that $\sum_{j=1}^n u_j^*eu_j \ge 1_\cM$. This leads to the contradiction: 
$$1_\cM = E(1_\cM) \le E(\sum_{j=1}^n u_j^*eu_j) = \sum_{j=1}^n u_j^*E(e)u_j = 0.$$
On the other hand, Popa shows in \cite{Popa:ENS1999} (p.~763) that if $[\cM:\cN]=\infty$, then there exist a singular state $\phi$ on $\cM$ that extends the trace on $\cN$, so we conclude that  $[\cM:\cN] <\infty$.

We include for completeness a brief sketch of Popa's argument from \cite{Popa:ENS1999}. It was shown in \cite{PimPop:1986} that $$[\cM:\cN] ^{-1} = \sup \{\lambda \ge 0: E_\cN(x) \ge \lambda x, \; \forall x \in \cM^+\},$$ where $E_\cN$ is the canonical trace preserving conditional expectation onto $\cN$. Assuming, to reach a contradiction, that $[\cM:\cN]=\infty$, Popa constructs, for each $n \ge 1$, a positive element $a_n \in \cM$ such that $E_\cN(a_n) = 1_\cN$ and $\tau(s(a_n)) \le 2^{-n}$, where $s(a_n)$ denotes the support projection of $a_n$. Define, for each $n \ge 1$, a (normal) state $\psi_n$ on $\cM$ by $\psi_n(x) = \tau(xa_n)$, $x \in \cM$. Then $\psi_n(y) = \tau(ya_n) = \tau(yE_\cN(a_n)) = \tau(y)$, for all $y \in \cN$.

Let $\psi$ be a weak$^*$ accumulation point of the sequence $\{\psi_n\}$. Then $\psi(y) = \tau(y)$, for all $y \in \cN$. Set $p_m = \bigvee_{n=m}^\infty s(a_n)$. Then $\{p_m\}$ is a decreasing sequence of projections in $\cM$ with limit $\bigwedge_{n=1}^\infty p_n = 0$, because $$\tau\big(\bigwedge_{n=1}^\infty p_n\big) = \inf_n \tau(p_n) \le \inf_n \sum_{j=n}^\infty \tau(s(a_j)) \le \inf_n 2^{-n+1} = 0.$$ 
On the other hand, $\psi_n(s(a_n)) = \tau(s(a_n)a_n) = \tau(a_n) = 1$, for all $n \ge 1$, so $\psi_n(p_m) = 1$, for all $n \ge m$, which implies that $\psi(p_m)=1$, for all $m \ge 1$. Thus $1-p_n \to 1_\cM$ while $\psi(1-p_n)=0$, which shows that $\psi$ is singular.

(ii) $\Rightarrow$ (i) holds by Proposition~\ref{prop:RDP-a}. 
\end{proof}

\noindent The condition of finite index appearing in Theorem~\ref{thm:vNirr} above does not carry over to general unital inclusions of simple \Cs s $\cB \subseteq \cA$ as noted in \cite[Corollary 4.5]{Popa:ENS1999}, see also 
Example~\ref{ex:FI}, Theorem~\ref{thm:crossed-product} and Proposition~\ref{prop:tensorIII}.

\begin{remark} It follows from Popa's theorem (the equivalence of (ii) and (iii) in the theorem above) that if $\cN_i \subseteq \cM_i$, $i=1,2$, are inclusions of II$_1$-factors with the relative Dixmier property, then $\cN_1 \overline{\otimes} \cN_2 \subseteq \cM_1 \overline{\otimes} \cM_2$ also has the relative Dixmier property; and further that this fails for infinite tensor products.
\end{remark}

\begin{question} Is each irreducible inclusion $\cN \subseteq \cM$ of type III factors $C^*$-irreducible? 
\end{question}

\section{Inclusions arising from groups and dynamical systems} \label{sec:dynamics}

\noindent Inclusions of both von Neumann algebras and \Cs s arise from groups and dynamical systems in several interesting ways. In Section~\ref{sec:background} we already mentioned results of Amrutam--Kalantar and Amrutam--Ursu stating when inclusions of the form $C^*_\lambda(\Gamma) \subseteq \cA \rtimes_{\mathrm{red}} \Gamma$ and $C(Y) \rtimes_\mathrm{red} \Gamma \subseteq C(X) \rtimes_\mathrm{red} \Gamma$ are $C^*$-irreducible. We shall here add further related examples to the list. At the end of the section we revisit the Amrutam--Kalantar theorem.

\subsection*{Inclusions arising from subgroups}

\noindent We consider in this subsection the case of  an inclusion $\Lambda \subseteq \Gamma$  of discrete groups, which gives rise to inclusions $\mathcal{L}(\Lambda) \subseteq \mathcal{L}(\Gamma)$ and $C^*_\lambda(\Lambda) \subseteq C^*_\lambda(\Gamma)$ 
of the associated finite von Neumann algebras, respectively, their reduced group \Cs s.

We say that \emph{$\Gamma$ is an icc group relatively to $\Lambda$} if $\{tst^{-1} : t \in \Lambda\}$ is infinite for all $s \in \Gamma \setminus \{e\}$. This condition implies that both $\Lambda$ and $\Gamma$ are icc, and hence that both $\mathcal{L}(\Lambda)$ and  $\mathcal{L}(\Gamma)$ are II$_1$-factors. We first note the following straightforward result:

\begin{proposition} \label{prop:rel-icc}
The following conditions are equivalent for any inclusion $\Lambda \subseteq \Gamma$ of discrete groups:
\begin{enumerate}
\item $\Gamma$ is an icc group relatively to $\Lambda$,
\item $\mathcal{L}(\Lambda)' \cap \mathcal{L}(\Gamma) = \C$,
\item $C^*_\lambda(\Lambda)' \cap C^*_\lambda(\Gamma) = \C$.
\end{enumerate}
Moreover, $\mathcal{L}(\Lambda) \subseteq \mathcal{L}(\Gamma)$ is $C^*$-irreducible if and only if $\Gamma$ is an icc group relatively to $\Lambda$ and $[\Gamma:\Lambda] < \infty$.
\end{proposition}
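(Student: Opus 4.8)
The plan is to treat the three-way equivalence of (i)--(iii) and the final ``moreover'' assertion separately: the first by a direct Fourier-coefficient computation, the second by reducing to Popa's Theorem~\ref{thm:vNirr}.

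For the equivalence of (i), (ii) and (iii) I would argue in a cycle. To see (i)$\Rightarrow$(ii), I represent $x \in \mathcal{L}(\Gamma)$ by its square-summable Fourier coefficients $x_s = \langle x\delta_e, \delta_s\rangle$. If $x$ commutes with every $u_t$, $t \in \Lambda$, then comparing coefficients in $u_t x u_t^* = x$ (using $u_t u_s u_t^* = u_{tst^{-1}}$) shows that $s \mapsto x_s$ is constant along each $\Lambda$-conjugacy class $\{tst^{-1} : t \in \Lambda\}$. When $\Gamma$ is icc relatively to $\Lambda$, every such class with $s \ne e$ is infinite, so square-summability forces $x_s = 0$ for $s \ne e$, whence $x \in \C 1$. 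The implication (ii)$\Rightarrow$(iii) is immediate, since the commutant of $C^*_\lambda(\Lambda)$ in $B(\ell^2(\Gamma))$ coincides with that of its weak closure $\mathcal{L}(\Lambda)$, giving $C^*_\lambda(\Lambda)' \cap C^*_\lambda(\Gamma) \subseteq \mathcal{L}(\Lambda)' \cap \mathcal{L}(\Gamma) = \C$. Finally, for (iii)$\Rightarrow$(i) I argue by contraposition: if some $s \ne e$ has a finite $\Lambda$-conjugacy class $C$, then $x = \sum_{s' \in C} u_{s'}$ is a finite sum of group unitaries, hence lies in $C^*_\lambda(\Gamma)$, is non-scalar since $e \notin C$, and is fixed by conjugation by each $u_t$, $t \in \Lambda$; thus it is a non-trivial element of $C^*_\lambda(\Lambda)' \cap C^*_\lambda(\Gamma)$.

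For the ``moreover'' statement I would first note that $C^*$-irreducibility presupposes that $\mathcal{L}(\Lambda)$ and $\mathcal{L}(\Gamma)$ are simple, which for finite group von Neumann algebras is equivalent to being factors, i.e. to $\Lambda$ and $\Gamma$ being icc. Under this hypothesis both are II$_1$-factors (with separable predual if the groups are countable, which is needed to invoke Popa), so Theorem~\ref{thm:vNirr} applies: the inclusion is $C^*$-irreducible if and only if it is irreducible and of finite Jones index. By the already-established equivalence (i)$\Leftrightarrow$(ii), irreducibility of $\mathcal{L}(\Lambda) \subseteq \mathcal{L}(\Gamma)$ is exactly the assertion that $\Gamma$ is icc relatively to $\Lambda$ (which, as recorded above, in turn forces both groups to be icc, so no separate factoriality assumption is required). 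It then remains only to match finiteness of the Jones index with finiteness of $[\Gamma:\Lambda]$.

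The key remaining step, and the one I expect to carry the real content, is the identity $[\mathcal{L}(\Gamma):\mathcal{L}(\Lambda)] = [\Gamma:\Lambda]$. I would derive it from the coupling-constant description $[\mathcal{L}(\Gamma):\mathcal{L}(\Lambda)] = \dim_{\mathcal{L}(\Lambda)} L^2(\mathcal{L}(\Gamma))$ together with the right-coset decomposition $\ell^2(\Gamma) = \bigoplus_{\Lambda g} \ell^2(\Lambda g)$, in which each summand is isomorphic to the standard left $\mathcal{L}(\Lambda)$-module $\ell^2(\Lambda)$ and hence has $\mathcal{L}(\Lambda)$-dimension one; the number of summands is $[\Gamma:\Lambda]$, giving the identity with both sides allowed to be infinite. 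Combining this with the previous paragraph and Popa's equivalence yields the claim. The main obstacle is therefore not a single delicate estimate but rather the correct deployment of Popa's theorem (ensuring the factoriality and separability hypotheses are in force) and the clean identification of the index; the Fourier-coefficient arguments for (i)--(iii) are routine once the conjugation formula for $u_t u_s u_t^*$ is in hand.
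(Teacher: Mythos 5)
Your proposal is correct and takes essentially the same route as the paper: the same Fourier-coefficient argument (constancy on $\Lambda$-conjugacy classes plus square-summability) for (i)$\Rightarrow$(ii), the same commutant identity $\mathcal{L}(\Lambda)'=C^*_\lambda(\Lambda)'$ for (ii)$\Rightarrow$(iii), the same finite-conjugacy-class witness $\sum_{s'\in C}u_{s'}$ for (iii)$\Rightarrow$(i), and the same reduction of the ``moreover'' part to Popa's Theorem~\ref{thm:vNirr} via the identity $[\mathcal{L}(\Gamma):\mathcal{L}(\Lambda)]=[\Gamma:\Lambda]$. The only difference is one of detail: you spell out the coset-decomposition proof of that index identity and the factoriality/separability bookkeeping needed to invoke Popa, which the paper simply cites as known facts.
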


\begin{proof} (i) $\Rightarrow$ (ii). If $T \in \mathcal{L}(\Lambda)' \cap \mathcal{L}(\Gamma)$, then the function $s \mapsto (T\delta_e)(s)$, $s \in \Gamma$, is constant on each $\Lambda$-conjugacy class (where $\{\delta_t\}_{t \in \Gamma}$ is the standard orthonormal basis for $\ell^2(\Gamma)$). Hence $(T\delta_e)(s) = 0$, for $s \ne e$. This implies that $T = cI$, where $c = (T\delta_e)(e)$. 

(ii) $\Rightarrow$ (iii) follows from the fact that $\mathcal{L}(\Lambda)' = C^*_\lambda(\Lambda)'$ and $C^*_\lambda(\Gamma) \subseteq \mathcal{L}(\Gamma)$.  (iii) $\Rightarrow$ (i). If $S:= \{t^{-1}st : t \in \Lambda\}$ is finite, for some $s \ne e$, then $a = \sum_{t \in S} u_t$ belongs to  $C^*_\lambda(\Lambda)' \cap C^*_\lambda(\Gamma)$ and $a \notin \C$, where $\lambda \colon t \mapsto u_t$ is the left-regular unitary representation of $\Gamma$ on $\ell^2(\Gamma)$.

The last claim follows from Theorem~\ref{thm:vNirr} combined with the equivalence of (i) and (ii) and the fact that $[\mathcal{L}(\Gamma):\mathcal{L}(\Lambda)] = [\Gamma:\Lambda]$.
\end{proof}

\noindent We proceed to consider  when an inclusion of discrete groups gives rise to a $C^*$-irreducible inclusion of their reduced group \Cs s. This is much more subtle than the corresponding question for von Neumann algebras covered in the proposition above. First, the reduced group \Cs{} of an icc group need not be simple, and second, even when $\Lambda$ and $\Gamma$ both are $C^*$-simple and $\Gamma$ is icc relatively to $\Lambda$, it is still not the case that $C^*_\lambda(\Lambda) \subseteq C^*_\lambda(\Gamma)$ is $C^*$-irreducible, although this is true when $\Lambda$ is normal in $\Gamma$.

We remind the reader of the recently developed deep characterization of $C^*$-simple groups. The breakthrough came with the paper by Breuillard--Kalantar--Kennedy--Ozawa, \cite{BKKO},  where the equivalence of (i) and (ii) below was established (among many other results). It was followed up by  Haagerup, \cite{Haagerup:new-look}, who proved  the equivalence of (ii), (iii) and (iv), which was independently discovered by Kennedy, \cite{Kennedy:C*-simple}, who moreover added conditions (v) and (vi) to the list.

%\newpage

\begin{theorem}[Breuillard--Kalantar--Kennedy--Ozawa, Haagerup, Kennedy] \label{thm:C*-simple}
The following conditions are equivalent for a discrete group $\Gamma$:
\begin{enumerate}
\item $\Gamma$ is $C^*$-simple,
\item $\Gamma$ acts freely on its Furstenberg boundary $\partial_F \Gamma$,
\item $\tau_0$ belongs to the weak$^*$ closure of $\{s.\varphi : s \in \Gamma\}$, for each state $\varphi$ on $C^*_\lambda(\Gamma)$,
\item $\Gamma$ satisfies the Powers' averaging property: for all $\ep >0$ and for all $s_1,\dots, s_n \in \Gamma \setminus \{e\}$ there exist $t_1, \dots, t_m \in \Gamma$ such that $\|\frac{1}{m} \sum_{k=1}^m u_{t_k^{-1}s_jt_k}\| < \ep$, for $j=1, \dots, n$.
\item $\Gamma$ has no non-trivial amenable residually normal  subgroups,
\item $\Gamma$ has no non-trivial amenable uniformly recurrent subgroups.
\end{enumerate}
\end{theorem}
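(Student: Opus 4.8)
The plan is to assemble this omnibus theorem from the three strands of results it collects, organizing everything around the \emph{Furstenberg boundary} $\partial_F\Gamma$, the universal minimal strongly proximal $\Gamma$-space. I would first realize $\partial_F\Gamma$ concretely via Hamana's theory, as the Gelfand spectrum of the $\Gamma$-injective envelope of $\C$ in the category of $\Gamma$-operator systems. This produces a canonical $\Gamma$-equivariant unital completely positive map from any unital $\Gamma$-C*-algebra into $C(\partial_F\Gamma)$, which is the workhorse behind every implication. The central structural input I would isolate at the outset is that every point stabilizer of the action $\Gamma \curvearrowright \partial_F\Gamma$ is amenable, since this is precisely what ties simplicity of $C^*_\lambda(\Gamma)$ to the absence of amenable subgroups and makes conditions (v) and (vi) enter the picture.

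For the core equivalence (i) $\Leftrightarrow$ (ii) (Kalantar--Kennedy, BKKO) I would proceed as follows. For (ii) $\Rightarrow$ (i): freeness of a boundary action trivially gives topological freeness, and since $\partial_F\Gamma$ is minimal, the reduced crossed product $C(\partial_F\Gamma) \rtimes_{\mathrm{red}} \Gamma$ is then simple. The delicate step is to transfer this to $C^*_\lambda(\Gamma)$, which is merely a subalgebra; here I would pass through a proper ideal $J \subseteq C^*_\lambda(\Gamma)$, use it to manufacture a $\Gamma$-invariant state, and then use the boundary ucp map together with the injective-envelope machinery to force that state to be the canonical trace $\tau_0$, whence $J = 0$. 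For the contrapositive of (i) $\Rightarrow$ (ii): a non-free boundary action provides a point with non-trivial (and, by the key fact above, amenable) stabilizer, from which one induces a $\Gamma$-invariant state obstructing uniqueness of the trace and hence a proper ideal.

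For the dynamical reformulations I would handle (iv) $\Rightarrow$ (i) by the classical Powers argument: if the conjugation-averages $\frac1m\sum_{k=1}^m u_{t_k^{-1}s_j t_k}$ can be made arbitrarily small in norm, then averaging drives any $x = \sum_s c_s u_s$ toward $c_e \cdot 1$, so a non-zero ideal must contain $1$. The equivalence (iii) $\Leftrightarrow$ (iv) is a Hahn--Banach/convexity duality: the assertion that $\tau_0$ lies in the weak$^*$-closure of the orbit $\{s.\varphi\}$ for every state $\varphi$ is dual to the existence of convex combinations of conjugates that are small on the complement of $\C \cdot 1$, which is exactly the averaging estimate of (iv). I would close the loop back to (ii) by showing, again via the boundary ucp map, that freeness of $\Gamma \curvearrowright \partial_F\Gamma$ forces $\tau_0$ to be captured by weak$^*$-limits of orbits of arbitrary states.

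Finally, for the subgroup-space conditions (v) and (vi) I would invoke the theory of uniformly recurrent subgroups: the stabilizer map $x \mapsto \mathrm{Stab}(x)$ pushes $\partial_F\Gamma$ into the Chabauty space $\mathrm{Sub}(\Gamma)$, and its closure is the \emph{Furstenberg URS}, all of whose members are amenable by the key fact. Freeness of the boundary action is then exactly the statement that this URS is trivial, and I would show that any non-trivial amenable URS must be subordinate to the Furstenberg URS, giving (ii) $\Leftrightarrow$ (vi); the residually-normal variant (v) follows by the analogous argument using normal approximations of subgroups. The main obstacle throughout is the amenability of boundary stabilizers together with the construction of the simplicity obstruction from such a stabilizer: this amenability input, resting on the full Hamana injective-envelope apparatus, is the technical heart on which all the non-trivial implications depend.
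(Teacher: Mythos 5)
The first thing to observe is that the paper does not prove Theorem~\ref{thm:C*-simple} at all: it is stated as background, with the equivalence of (i) and (ii) attributed to Breuillard--Kalantar--Kennedy--Ozawa \cite{BKKO}, the equivalences with (iii) and (iv) to Haagerup \cite{Haagerup:new-look}, and conditions (v), (vi) to Kennedy \cite{Kennedy:C*-simple}. So there is no in-paper proof to compare yours against; your attempt has to be judged as a reconstruction of the cited literature, and as such its architecture is faithful: the realization of $\partial_F\Gamma$ as the spectrum of the $\Gamma$-injective envelope of $\mathbb{C}$, amenability of boundary stabilizers as the pivot between simplicity and the subgroup conditions, the classical Powers argument for (iv) $\Rightarrow$ (i), the Hahn--Banach duality for (iii) $\Leftrightarrow$ (iv), and the stabilizer URS for (v) and (vi) is exactly how those papers organize the proofs.

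That said, it is an outline rather than a proof, and two of the steps as you describe them are genuinely underspecified. First, in (ii) $\Rightarrow$ (i) you propose to ``pass through a proper ideal $J$, use it to manufacture a $\Gamma$-invariant state.'' For a non-amenable group there is no averaging procedure available, so extracting an invariant (or tracial) state from a proper ideal is precisely the hard point, not a routine move; the actual arguments proceed either through rigidity/essentiality properties of the $\Gamma$-injective envelope (Kalantar--Kennedy) or through strong proximality of the induced action on the state space (Haagerup's route, which is also the one the present paper adapts in its Theorem~\ref{thm:C*-group}), and without one of these mechanisms your step restates the problem rather than solving it. Second, for (vi) you assert that any non-trivial amenable URS is subordinate to the Furstenberg URS; this maximality of the stabilizer URS of $\partial_F\Gamma$ among amenable URSs is itself a substantive theorem whose proof again runs through boundary theory, and the confined-subgroup condition (v) does not follow by a merely ``analogous argument'' --- it requires its own reduction. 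In short: your roadmap matches the literature that the paper cites in lieu of a proof, but the technical heart (amenability of stabilizers, injective-envelope rigidity, URS maximality) is invoked rather than established, so the proposal is a survey of the proof rather than a proof.
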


\noindent
Condition (iv) is equivalent to the following more standard formulation of the Powers' averaging property: for all $\ep >0$ and for all $x \in C^*_\lambda(\Gamma)$ there exist $t_1, \dots, t_m \in \Gamma$ such that $\|\frac{1}{m} \sum_{k=1}^m u_{t_k}^* x u_{t_k} - \tau_0(x)\cdot 1\| < \ep$.

A few words about the terminology of the theorem. As before, $\tau_0$ denotes the canonical trace on $C^*_\lambda(\Gamma)$.  If $\varphi$ is a state on $C^*_\lambda(\Gamma)$ and $s \in \Gamma$, then $s.\varphi$ denotes the state $(s.\varphi)(x) = \varphi(u_s^*xu_s)$, $x \in C^*_\lambda(\Gamma)$. 

A subgroup $\Lambda$ of a group $\Gamma$ is \emph{residually normal}\footnote{Group theorists say that $\Lambda$ is a \emph{confined subgroup} in this case.} if there is a finite subset $F$ of $\Gamma \setminus \{e\}$ such that $F \cap t^{-1} \Lambda t \ne \emptyset$, for all $t \in \Gamma$.  Let $\mathrm{Sub}(\Gamma)$ denote the compact Hausdorff space of all subgroups of $\Gamma$  (viewed as a closed subset of the compact Cantor space $\mathcal{P}(\Gamma) = \{0,1\}^\Gamma$ of all subsets of $\Gamma$). The group $\Gamma$ acts on $\mathrm{Sub}(\Gamma)$ by conjugation. A \emph{uniformly recurrent subgroup} is a minimal closed $\Gamma$-invariant subspace $X$ of $\mathrm{Sub}(\Gamma)$. It is amenable if all subgroups in $X$ are amenable, and it is trivial if $X$ is the singleton consiting of the trivial group $\{e\}$. 

Analogous to the situation in Proposition~\ref{prop:rel-icc}, natural relative versions of the conditions in Theorem~\ref{thm:C*-simple} suggest themselves as candidates for ensuring $C^*$-irreducibility of the reduced \Cs s of the inclusion of groups. I thank Mehrdad Kalantar for suggesting the present formulation of (ii), which is an adjustment of our first version of this condition.

\begin{theorem} \label{thm:C*-group} Consider the following conditions for an inclusion $\Lambda \subseteq \Gamma$ of $C^*$-simple groups.
\begin{enumerate}
\item The inclusion $C^*_\lambda(\Lambda) \subseteq C^*_\lambda(\Gamma)$ is $C^*$-irreducible,
\item there is a topologically free boundary action $\Gamma \curvearrowright X$ such that, for each probability measure $\mu$ on $X$, the weak$^*$ closure of the orbit $\Lambda.\mu$ contains a point mass $\delta_x$, for some $x \in X$ on which $\Gamma$ acts freely,
\item $\tau_0$ belongs to the weak$^*$ closure of $\{s.\varphi : s \in \Lambda\}$, for each state $\varphi$ on $C^*_\lambda(\Gamma)$,
\item $\tau_0$ belongs to the weak$^*$ closure of $\mathrm{conv}\{s.\varphi : s \in \Lambda\}$, for each state $\varphi$ on $C^*_\lambda(\Gamma)$,
\item $\Gamma$ has the Powers' averaging property relatively to $\Lambda$:
 for all $\ep >0$ and for all $s_1,\dots, s_n \in \Gamma \setminus \{e\}$ there exist $t_1, \dots, t_m \in \Lambda$ such that $\|\frac{1}{m} \sum_{k=1}^m u_{t_k^{-1}s_jt_k}\| < \ep$, for $j=1, \dots, n$,
\item the inclusion $C^*_\lambda(\Lambda) \subseteq C^*_\lambda(\Gamma)$ has the relative Dixmier property.
\end{enumerate}
Then {\emph{(ii)}} $\Rightarrow$ \emph{(iii)} $\Rightarrow$ \emph{(iv)} $\Leftrightarrow$ \emph{(v)} $\Rightarrow$ \emph{(vi)} $\Rightarrow$ \emph{(i)}, and \emph{(i)} $\Rightarrow$ \emph{(vi)} if $[\Gamma: \Lambda]<\infty$. 
\end{theorem}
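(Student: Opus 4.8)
The statement is a chain of implications between relative versions of the $C^*$-simplicity conditions from Theorem~\ref{thm:C*-simple}, so the overall strategy is to mimic, at the relative level, the proofs of the known equivalences in the absolute setting, while inserting the intrinsic characterization of $C^*$-irreducibility (Proposition~\ref{prop:C*-irr}) at the endpoints. The plan is to prove the implications in the order they are listed, treating (iv)$\Leftrightarrow$(v) and the two endpoint implications (vi)$\Rightarrow$(i) and (i)$\Rightarrow$(vi) as the conceptually distinct pieces, and the boundary/state-theoretic implications (ii)$\Rightarrow$(iii)$\Rightarrow$(iv) as the adaptations of the Breuillard--Kalantar--Kennedy--Ozawa and Haagerup machinery.

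\textbf{The routine middle.} For (iv)$\Leftrightarrow$(v), I would note that condition (v) is exactly the relative analog of Theorem~\ref{thm:C*-simple}(iv), and that the reformulation remark following that theorem already records the passage between the ``$\|\frac1m\sum u_{t_k^{-1}s_jt_k}\|<\ep$'' form and the averaging form $\|\frac1m\sum u_{t_k}^*xu_{t_k}-\tau_0(x)\|<\ep$; the equivalence with (iv) is then the standard duality between an averaging statement over $x$ and the weak$^*$-convergence statement over states $\varphi$, where one tests against $\varphi$ and uses convexity to move between the closed convex hull of $\{s.\varphi:s\in\Lambda\}$ and averages of conjugates. The implication (iii)$\Rightarrow$(iv) is immediate since $\{s.\varphi:s\in\Lambda\}\subseteq\mathrm{conv}\{s.\varphi:s\in\Lambda\}$, so the weak$^*$ closure of the former sits inside that of the latter. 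For (ii)$\Rightarrow$(iii) I would run the boundary-theoretic argument: a state $\varphi$ on $C^*_\lambda(\Gamma)$ restricts to a probability measure on the boundary $X$ via the $\Gamma$-equivariant unital embedding $C(X)\hookrightarrow C^*_\lambda(\Gamma)$ afforded by the boundary action; condition (ii) produces, in the weak$^*$ closure of $\Lambda.\mu$, a point mass $\delta_x$ with $x$ a free point, and freeness of $x$ together with topological freeness forces the corresponding weak$^*$ limit of $\{s.\varphi:s\in\Lambda\}$ to be the canonical trace $\tau_0$, exactly as in the absolute case.

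\textbf{The endpoints.} The implication (vi)$\Rightarrow$(i) is where the paper's own framework pays off: if the inclusion has the relative Dixmier property, then for each nonzero positive $a\in C^*_\lambda(\Gamma)$ the set $C_{C^*_\lambda(\Lambda)}(a)$ meets $\C\cdot 1$; since $\Gamma$ being $C^*$-simple with relative Dixmier property forces a faithful trace situation (the unique trace $\tau_0$ is faithful), the scalar limit is $\tau_0(a)>0$, so $C_{C^*_\lambda(\Lambda)}(a)$ contains an invertible element, and by the reformulation of Proposition~\ref{prop:C*-irr} via Lemma~\ref{lm:rel-full} this is exactly relative fullness of $a$ --- hence $C^*$-irreducibility. (This is essentially Proposition~\ref{prop:RDP-a}.) For (v)$\Rightarrow$(vi) I would show the relative Powers averaging directly yields that $C_{C^*_\lambda(\Lambda)}(x)\cap\C\cdot 1\neq\emptyset$ for every $x$, by first reducing to $x$ a finite linear combination of the $u_s$, averaging the non-identity terms to norm-small elements via (v), and keeping the coefficient of $u_e$, which is $\tau_0(x)$; a density and triangle-inequality argument extends this to all of $C^*_\lambda(\Gamma)$.

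\textbf{Main obstacle.} The genuinely delicate implication is (i)$\Rightarrow$(vi) under the finite-index hypothesis $[\Gamma:\Lambda]<\infty$. Here $C^*$-irreducibility gives only relative fullness of positive elements, which is an averaging statement about $\sum_{j} x_j^*ax_j\geq 1$ with $x_j\in C^*_\lambda(\Lambda)$ and no control of scalars, whereas the relative Dixmier property demands that $C_{C^*_\lambda(\Lambda)}(a)$ actually \emph{contains a scalar}. Bridging this gap is exactly the $C^*$-analog of the hard direction of Popa's theorem (the (i)$\Rightarrow$(ii)/(iii) part of Theorem~\ref{thm:vNirr}), and the finite-index hypothesis is what makes it tractable: finite index supplies a conditional expectation $E\colon C^*_\lambda(\Gamma)\to C^*_\lambda(\Lambda)$ with an index bound $E(a)\geq\lambda a$, and I expect to invoke Popa's relative Dixmier machinery (\cite{Popa:JFA2000}, as cited before Proposition~\ref{prop:RDP-a}) whose hypotheses --- the Dixmier property of $C^*_\lambda(\Lambda)$ (from $C^*$-simplicity of $\Lambda$ plus unique trace), finite index, and the irreducibility condition $\pi_\varphi(C^*_\lambda(\Lambda))'\cap\pi_\varphi(C^*_\lambda(\Gamma))''=\C$ --- can all be verified from $C^*$-irreducibility (which forces $C^*_\lambda(\Lambda)'\cap C^*_\lambda(\Gamma)=\C$ by Remark~\ref{rem:irr}). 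The crux will be checking the GNS-level irreducibility hypothesis (iii) of Popa's theorem from the purely $C^*$-algebraic relative commutant triviality, and this is the step I would expect to require the most care.
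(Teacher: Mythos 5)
Your overall route coincides with the paper's: the middle implications are handled by adapting Haagerup's arguments, (vi)\,$\Rightarrow$\,(i) is Proposition~\ref{prop:RDP-a}, and (i)\,$\Rightarrow$\,(vi) goes through Popa's relative Dixmier theorem \cite{Popa:JFA2000}. However, there are two genuine gaps. First, in (ii)\,$\Rightarrow$\,(iii) you produce the measure by ``restricting $\varphi$ along a $\Gamma$-equivariant unital embedding $C(X)\hookrightarrow C^*_\lambda(\Gamma)$''. No such embedding exists: $C(X)$ is not a subalgebra of the reduced group \Cs, and there is no canonical $\Gamma$-equivariant copy of it there along which a state could be restricted. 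The correct mechanism---which is what the paper does, following the proof of (i)\,$\Rightarrow$\,(ii) of \cite[Theorem 4.5]{Haagerup:new-look}---is to \emph{extend} $\varphi$ by Hahn--Banach to a state $\psi$ on $C(X)\rtimes_{\mathrm{red}}\Gamma$, which contains both $C(X)$ and $C^*_\lambda(\Gamma)$, restrict $\psi$ to $C(X)$ to obtain a probability measure $\rho$, apply (ii) to get a net $\{s_i\}$ in $\Lambda$ with $s_i.\rho\to\delta_x$ for a free point $x$, pass to a subnet so that $s_i.\psi$ converges to a state $\psi'$ on the crossed product, and finally invoke \cite[Lemma 3.1]{Haagerup:new-look} to conclude that the restriction of $\psi'$ to $C^*_\lambda(\Gamma)$ is $\tau_0$. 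Your sketch has the right skeleton, but without the crossed product as ambient algebra the first step of your argument cannot be carried out.

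Second, in (i)\,$\Rightarrow$\,(vi) you correctly identify Popa's \cite[Theorem 2.1]{Popa:JFA2000} as the tool, correctly list its three hypotheses, and correctly flag the GNS-level condition $\pi_\varphi(C^*_\lambda(\Lambda))'\cap\pi_\varphi(C^*_\lambda(\Gamma))''=\C$ as the crux---but you then leave that crux unresolved. The paper closes precisely this step with Proposition~\ref{prop:rel-icc}, proved earlier in the same section: triviality of the relative commutant $C^*_\lambda(\Lambda)'\cap C^*_\lambda(\Gamma)=\C$ (which follows from (i) by Remark~\ref{rem:irr}) is equivalent to $\Gamma$ being icc relatively to $\Lambda$, which in turn is equivalent to $\mathcal{L}(\Lambda)'\cap\mathcal{L}(\Gamma)=\C$; taking $\varphi=\tau_0$, whose GNS representation generates $\mathcal{L}(\Gamma)$, this is exactly Popa's hypothesis. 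So the missing idea is not a new estimate but the passage through the group-theoretic relative icc condition; without Proposition~\ref{prop:rel-icc} (or a substitute for it) your proof of the final implication is incomplete.
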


\noindent Condition (v) is in the papers \cite{Amrutam:2019} and \cite{Ursu:2020} referred to as $\Lambda$ being a \emph{plump} subgroup of $\Gamma$, and it can equivalently be expressed as 
$$\tau_0(x) \! \cdot \! 1 \in \overline{\mathrm{conv}\{u_t^* x u_t : t \in \Lambda\}},$$
for all $x \in  C^*_\lambda(\Gamma)$.  

In \cite{Ursu:2020}, Ursu proved a number of reformulations of (v)  in the case where $\Lambda$ is \emph{normal} in $\Gamma$, including  that the action $\Gamma \curvearrowright \partial_F \Lambda$ is free\footnote{The action $\Lambda \curvearrowright \partial_F \Lambda$ extends uniquely to an action $\Gamma \curvearrowright \partial_F \Lambda$, when $\Lambda$ is normal in $\Gamma$, cf. \cite[Lemma 21]{Ozawa:Furstenberg}.}. This implies that (ii) holds, and hence 
entails that conditions (ii)--(v) above all are equivalent when $\Lambda$ is normal in $\Gamma$. 

In the work, \cite{Omland}, still under preparation, Tron Omland further proved that, in fact, all conditions (i)--(vi) above are equivalent when $\Lambda$ is normal in $\Gamma$ and, moreover,  that (i) holds if and only if $\Gamma$ and $\Lambda$ are $C^*$-simple and $\Gamma$ is icc relatively to $\Lambda$, i.e., that $C^*_\lambda(\Lambda)' \cap C^*_\lambda(\Gamma) = \C$.  Moreover, if $\Lambda$ is $C^*$-simple and $\Gamma$ is icc relatively to $\Lambda$, then $\Gamma$ is automatically $C^*$-simple. We thus have a complete understanding of when normal inclusions of $C^*$-simple groups give rise to $C^*$-irreducible inclusions of their reduced \Cs s.

Omland proved in the same paper that one can find a (necessarily non-normal and infinite index) inclusion $\Lambda \subseteq \Gamma$ of $C^*$-simple groups such that the inclusion $C^*_\lambda(\Lambda) \subseteq C^*_\lambda(\Gamma)$ is irreducible not $C^*$-irreducible. 

\begin{proof} 
(ii) $\Rightarrow$ (iii). The proof is almost identical to the proof of ``(i) $\Rightarrow$ (ii)'' of \cite[Theorem 4.5]{Haagerup:new-look}. Take a state $\varphi$ on $C^*_\lambda(\Gamma)$, extend it to a state $\psi$ on $C(X) \rtimes_{\mathrm{red}} \Gamma$, and let $\rho$ be the restriction of $\psi$ to $C(X)$. 
By assumption there is a net $\{s_i\}$ in $\Lambda$ such that $s_i.\rho$ converges in the weak$^*$ topology to a point-evaluation $\delta_x$, for some $x \in X$ on which $\Gamma$ acts freely. Upon passing to a subnet we may assume that $\{s_i.\psi\}$ converges to some state $\psi'$ on $C(X) \rtimes_{\mathrm{red}} \Gamma$. Let $\varphi'$ be the restriction of $\psi'$ to $C^*_\lambda(\Gamma)$. Since $\Gamma$ acts freely on $x$ it follows from \cite[Lemma 3.1]{Haagerup:new-look} that $\varphi' = \tau_0$. Hence (iii) holds.

(iii) $\Rightarrow$ (iv) is trivial. (iv) $\Rightarrow$ (v) is a standard Hahn-Banach argument, cf.\ the proof of (iii) $\Rightarrow$ (iv) $\Rightarrow$ (v) of \cite[Theorem 4.5]{Haagerup:new-look}.

(v) $\Rightarrow$ (iv). Let $x_1, \dots, x_n \in C^*_\lambda(\Gamma)$ and $\ep > 0$ be given. Repeated application of (v) shows that there exist $m \ge 1$ and $t_1, \dots, t_m \in \Lambda$ such that
$$\big| \tau_0(x_j) \cdot 1 - m^{-1} \, \sum_{k=1}^m u_{t_k}^* x_j u_{t_k} \big| < \ep,$$
for $j=1, \dots, n$. It follows that $|\tau_0(x_j) - m^{-1} \sum_{k=1}^m (t_k.\varphi)(x_j)| < \ep$, for all states $\varphi$ on $C^*_\lambda(\Gamma)$. This proves that (iv) holds.

(v) $\Rightarrow$ (vi) is trivial, and (vi) $\Rightarrow$ (i)  follows from Proposition~\ref{prop:M}, since $C^*_\lambda(\Gamma)$ has a faithful trace. 

For the last claim, suppose that (i) holds and that $[\Gamma : \Lambda]< \infty$. Then the canonical conditional expectation $E \colon C^*_\lambda(\Gamma) \to C^*_\lambda(\Lambda)$ has finite index. Moreover, (i) implies that  $\mathcal{L}(\Lambda)' \cap \mathcal{L}(\Gamma) = \C$, cf.\ Proposition~\ref{prop:rel-icc} and Remark~\ref{rem:irr}. Lastly, $C^*_\lambda(\Lambda)$ has the Dixmier property (being simple with a unique tracial state), so it follows from \cite[Theorem 2.1]{Popa:JFA2000} (cf.\ the comments below Definition~\ref{def:RDP}) that (vi) holds.
\end{proof}

\begin{example} \label{ex:FI}
For each (non-empty) index set $I$, let $\mathbb{F}_I$ denote the free group with generating set $I$. If $I \subset J$ and $|I| \ge 2$, then the inclusion $C^*_\lambda(\mathbb{F}_I) \subseteq C^*_\lambda(\mathbb{F}_J)$ is $C^*$-irreducible. Indeed, Powers proved in his influential paper \cite{Powers:F_2} that $C^*_\mathrm{\lambda}(\mathbb{F}_I)$ is simple whenever $|I| \ge 2$ (he actually only considered the case $|I| = 2$, but the general result clearly follows from his proof). Theorem 1 of \cite{Powers:F_2} precisely states that (iv) of Theorem~\ref{thm:C*-simple} holds for $\Gamma = \mathbb{F}_2$. Inspection of his proof of this theorem (via Lemmas 3--6) shows that condition (v) of Theorem~\ref{thm:C*-group} holds with $\Gamma = \mathbb{F}_J$  and $\Lambda = \mathbb{F}_I$, as long as $|I| \ge 2$.
\end{example}

\noindent Further examples of   inclusions of $C^*$-simple groups $\Lambda \subseteq \Gamma$ for which $\Lambda$ is a plump subgroup of $\Gamma$, and hence $C^*_\lambda(\Lambda) \subseteq C^*_\lambda(\Gamma)$ is $C^*$-irreducible, can be found in \cite[Section 5]{Ursu:2020}. It would be interesting to expand the dictionary of such inclusions of groups even further.

\begin{question} Which of the ``missing'' implications in Theorem~\ref{thm:C*-group} hold? 
\end{question}

\noindent Condition (i) of Theorem~\ref{thm:C*-group} does not imply that $[\Gamma : \Lambda] < \infty$, and we do not know if the implication (i) $\Rightarrow$ (iv) holds in general without assuming finite index.

\begin{question} \label{q:W} 
  For which ($C^*$-irreducible) inclusions $C^*_\lambda(\Lambda) \subseteq C^*_\lambda(\Gamma)$ is it the case that all intermediate \Cs s are of the form $C^*_\lambda(\Pi)$ for some intermediate group $\Lambda \subseteq \Pi \subseteq \Gamma$? 
\end{question}

\subsection*{Inclusions arising from crossed products}

\noindent An action $\Gamma \curvearrowright \cB$ of a group $\Gamma$ on a simple \Cs{} $\cB$ gives in a canonical way rise to two  inclusions of \Cs s, namely $\cB \subseteq  \cB \rtimes_{\mathrm{red}} \Gamma$ and $C^*_\lambda(\Gamma) \subseteq \cB \rtimes_{\mathrm{red}} \Gamma$. The first class of inclusions is very well understood, but for  completeness of the exposition we review when such inclusions are $C^*$-irreducible and what we know about their intermediate \Cs s.  Part (i) of the following lemma is contained in \cite[Lemma 7.1]{OlePed:C*-dynamicIII}, see also \cite{Kis:Auto}. Recall that an automorphism on a unital \emph{simple} \Cs{} is properly outer if it is not inner.

\begin{lemma} \label{lm:crossed-product}
Let $\cB$ be a unital simple infinite dimensional \Cs, and let $\Gamma$ be a discrete group with an outer action on $\cB$. Let $E \colon   \cB \rtimes_{\mathrm{red}} \Gamma \to \cB$ be the canonical conditional expectation.
\begin{enumerate}
\item For each non-zero positive element $a \in \cB \rtimes_{\mathrm{red}} \Gamma$ and each $\ep >0$ there exists a positive contraction $h \in \cB$ such that $\|hah - hE(a)h\| \le \ep$ and $\|hE(a)h\| \ge \|E(a)\| - \ep$. In particular, $\cB \subseteq \cB \rtimes_{\mathrm{red}} \Gamma$ has the pinching property (Definition~\ref{def:PP}) with respect to  $E$.
\item For each non-zero hereditary sub-\Cs{} $\cB_0 \subseteq \cB$, for each finite subset $\mathcal{F} \subseteq \cB$, and for each $\ep >0$  there exists a positive element $h \in \cB_0$ with $\|h\|=1$ and $\|h(a-E(a))h\| \le \ep$, for all $a \in \mathcal{F}$. 
\item The inclusion $\cB \subseteq \cB \rtimes_{\mathrm{red}} \Gamma$ has the relative excision property (cf.\ Definition~\ref{def:REP}) with respect to each state $\psi$ on $\cB \rtimes_{\mathrm{red}} \Gamma$ that factors through $E$.
\end{enumerate}
\end{lemma}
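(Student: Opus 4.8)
\emph{Overall strategy and main tool.} My plan is to reduce all three parts to a single mechanism: outerness of the action lets one ``average away'' the off-diagonal part of an element of $\cB \rtimes_{\mathrm{red}} \Gamma$ by sandwiching it between positive elements of $\cB$. Write $\{u_s\}_{s \in \Gamma}$ for the canonical unitaries, so that $u_s b u_s^* = \alpha_s(b)$ and $E(\sum_s b_s u_s) = b_e$. Since $u_s h = \alpha_s(h) u_s$, the basic identity is
\[
h\,(b_s u_s)\,h = h\, b_s\, \alpha_s(h)\, u_s, \qquad\text{so}\qquad \|h\,(b_s u_s)\,h\| = \|h\, b_s\, \alpha_s(h)\|,
\]
for $h \in \cB$. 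The input I would lean on is Kishimoto's averaging lemma \cite{Kis:Auto} (see also \cite{OlePed:C*-dynamicIII}): because $\cB$ is simple and each $\alpha_s$, $s \ne e$, is outer, for every nonzero hereditary sub-\Cs{} $\cB_0 \subseteq \cB$, every finite family $\{(b_j, s_j)\}$ with $s_j \ne e$, and every $\ep>0$, there is a positive $h \in \cB_0$ with $\|h\|=1$ and $\|h\, b_j\, \alpha_{s_j}(h)\| < \ep$ for all $j$.

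\emph{Part (ii).} Here the relevant elements $a$ should be read as general elements of $\cB \rtimes_{\mathrm{red}} \Gamma$ (for $a \in \cB$ the assertion is vacuous, since $a-E(a)=0$). Given $\cB_0$, a finite $\mathcal{F}$, and $\ep$, I would first approximate each $a \in \mathcal{F}$ to within a small tolerance by a finite sum $\sum_{s \in F_a} b^{(a)}_s u_s$, so that $a - E(a)$ agrees, up to that error, with $\sum_{s \in F_a \setminus \{e\}} b^{(a)}_s u_s$. By the identity above and the triangle inequality, $\|h(a - E(a))h\|$ is then dominated by $\sum_{s \ne e}\|h\,b^{(a)}_s\,\alpha_s(h)\|$ plus the approximation error. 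Applying Kishimoto's lemma inside $\cB_0$ to the single finite family $\{(b^{(a)}_s, s) : a \in \mathcal{F},\ s \in F_a\setminus\{e\}\}$, with tolerance adjusted for the number of terms, produces one $h \in \cB_0$, $\|h\|=1$, making every $\|h\,b^{(a)}_s\,\alpha_s(h)\|$ small; this $h$ satisfies $\|h(a - E(a))h\| \le \ep$ for all $a \in \mathcal{F}$ simultaneously.

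\emph{Part (i), deduced from (ii).} Put $c = E(a)$. Since $E$ is faithful and $a \ge 0$, $a \ne 0$, we have $c \ge 0$ with $c \ne 0$; set $M = \|c\| > 0$. Fix $0 < \eta \le \ep$, let $e = (c - (M-\eta))_+ \ne 0$, and let $\cB_0 = \overline{e \cB e}$. The support projection $r$ of $e$ satisfies $rh = h = hr$ for $h \in \cB_0$ and $rcr \ge (M-\eta)r$, so for any positive $h \in \cB_0$ with $\|h\|=1$,
\[
hch = h(rcr)h \ge (M-\eta)\,h^2, \qquad\text{hence}\qquad \|hch\| \ge (M-\eta)\|h\|^2 = M-\eta.
\]
Thus the choice of $\cB_0$ already forces the lower bound on $\|hE(a)h\|$. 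Applying part (ii) with this $\cB_0$ and $\mathcal{F} = \{a\}$ yields a positive contraction $h \in \cB_0$, $\|h\|=1$, with $\|h(a-E(a))h\| \le \ep$; as $h$ is positive, $\|hah - hE(a)h\| = \|h(a-E(a))h\| \le \ep$ and $\|hE(a)h\| = \|hch\| \ge M-\eta \ge \|E(a)\|-\ep$. The final ``in particular'' is then immediate from Definition~\ref{def:PP}.

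\emph{Part (iii), and the main obstacle.} A state $\psi$ factoring through $E$ is $\psi = \rho \circ E$ with $\rho = \psi|_{\cB}$. For $h \in \cB^+$ I would split
\[
h^{1/2} a h^{1/2} - \psi(a)\,h = \big(h^{1/2}E(a)h^{1/2} - \rho(E(a))\,h\big) + h^{1/2}\big(a - E(a)\big)h^{1/2}.
\]
The second term is handled exactly as in (ii): with $g = h^{1/2}$ it equals $g(a-E(a))g$, which part (ii) makes small. The first term asks that $h$ excise $\rho$ on $\cB$ at the point $E(a)$; since $\cB$ is unital, simple and infinite-dimensional, it is antiliminal, so every state on $\cB$ is a weak$^*$-limit of pure states and hence admits an excising net \cite{AkeAndPed:excising}. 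The plan is to combine these into a single net $\{h_\alpha\} \subseteq \cB^+$, indexed by finite subsets of $\cB \rtimes_{\mathrm{red}}\Gamma$ and tolerances: first excise $\rho$ to get a positive norm-one $k$ with $\rho(k)$ close to $1$ realizing the diagonal estimate, then apply Kishimoto's lemma inside $\overline{k\cB k}$ to refine $k$ to an $h$ that also kills the off-diagonal terms, arranging that $\rho(h)$ stays close to $1$ so the excision estimate survives. The genuinely delicate point—the main obstacle—is precisely this simultaneity: excision concentrates $h$ in the direction of $\rho$, while the averaging wants $h$ moved almost orthogonally to itself by each $\alpha_s$, and reconciling the two for one and the same $h$ is where the outerness hypothesis must do its real work.
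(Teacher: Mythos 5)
Your parts (i) and (ii) are correct and rest on the same engine as the paper's proof, namely the Kishimoto/Olesen--Pedersen averaging lemma (\cite{Kis:Auto}, \cite[Lemma 7.1]{OlePed:C*-dynamicIII}); the paper organizes (ii) as an iteration of that lemma over the elements of $\mathcal{F}$ using nested hereditary subalgebras, while you apply the simultaneous (multi-pair) version once, which amounts to the same thing. Two points in your favour: you correctly observed that $\mathcal{F}$ in (ii) must be read as a finite subset of $\cB \rtimes_{\mathrm{red}} \Gamma$, and your deduction of (i) from (ii) --- localizing in the hereditary sub-\Cs{} $\overline{e\cB e}$ with $e=(E(a)-(\|E(a)\|-\eta))_+$, so that \emph{every} norm-one positive $h$ there automatically satisfies $\|hE(a)h\| \ge \|E(a)\|-\eta$ --- is a clean, correct alternative to the paper's route, which instead quotes the norm-preservation clause built into \cite[Lemma 7.1]{OlePed:C*-dynamicIII}.

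Part (iii), however, contains a genuine gap, which you yourself flag as ``the main obstacle'' without closing it, and the mechanism you propose for closing it would fail. You suggest: excise $\rho$ to get $k$, run the averaging lemma inside $\overline{k\cB k}$ to get $h$, and ``arrange that $\rho(h)$ stays close to $1$ so the excision estimate survives.'' First, the averaging lemma gives no control whatsoever on $\rho(h)$. Second, and more fundamentally, closeness of $\rho(h)$ to $1$ is not what transfers the excision estimate from $k$ to $h$: one can have $k$ excising $\rho$ on a finite set within $\delta$, and a norm-one positive $h \in \overline{k\cB k}$ with $\rho(h)$ arbitrarily close to $1$, for which $\|h^{1/2}E(a)h^{1/2}-\rho(E(a))h\|$ is much larger than $\delta$ (the estimate transfers with constant $\lambda$ only to those $h$ dominated as $h \le \lambda k^2$). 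The missing idea --- and this is exactly how the paper argues --- is to strengthen excision to a \emph{hereditary} form \emph{before} invoking (ii): choose the excising element with a flat top, i.e.\ together with a norm-one positive $k'\in\cB$ satisfying $kk'=k'$ (achievable from ordinary excision \cite{AkeAndPed:excising} by functional calculus, at the cost of a harmless factor), and set $\cB_0=\overline{k'\cB k'}$. Then every norm-one positive $h\in\cB_0$ satisfies $kh=h$, hence $k^{1/2}h^{1/2}=h^{1/2}$, and so
$$h^{1/2}E(a)h^{1/2}-\rho(E(a))\,h \;=\; h^{1/2}\bigl(k^{1/2}E(a)k^{1/2}-\rho(E(a))\,k\bigr)h^{1/2},$$
so the estimate $\le \ep/2$ passes automatically to \emph{all} of $\cB_0$. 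Now apply your part (ii) inside this $\cB_0$ --- this is precisely why (ii) is formulated for an arbitrary non-zero hereditary sub-\Cs{} --- and the single resulting $h$ does both jobs. Note finally that your closing diagnosis is off: outerness plays no role in reconciling the two estimates (it enters only through (ii)); the reconciliation is a statement about excision on $\cB$ alone.
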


\begin{proof}
 Let $t \mapsto u_t$, $t \in \Gamma$, denote the unitary representation of $\Gamma$ in the crossed product 
$\cB \rtimes_{\mathrm{red}} \Gamma$. Then $\cA_0 := \mathrm{span} \{\cB u_t : t \in \Gamma\}$ is dense in $\cB \rtimes_{\mathrm{red}} \Gamma$. To see that (i) follows from \cite[Lemma~7.1]{OlePed:C*-dynamicIII}, note that we may assume that $a \in \cA_0$.

(ii). We may assume that $\mathcal{F} \subseteq \cA_0$. Moreover, by replacing each $a \in \mathcal{F}$ with $a-E(a)$, we may further assume that $E(a) = 0$. In the case where $\mathcal{F} = \{a_1\}$ is a singleton, we can use \cite[Lemma 7.1]{OlePed:C*-dynamicIII} (with $a_0$ any non-zero positive element in $\cB_0$) to find positive elements $h_1, h_1' \in \cB_0$ with $\|h_1\|=\|h_1'\|=1$ such that $h_1h_1'=h_1'$ and $\|h_1a_1h_1\| \le\ep$. If $\mathcal{F} = \{a_1,a_2\}$, then take a positive elements $h_2, h_2'$ in $\overline{h_1'\cB h_1'}$ with $\|h_2\|=\|h_2'\|=1$ such that $h_2h_2'=h_2'$ and $\|h_2a_2h_2\| \le \ep$, and note that we still have $\|h_2a_1h_2\| \le \ep$. Continue like this until all elements in $\mathcal{F}$ have been exhausted.

(iii). Let $\ep >0$ and a finite subset $\mathcal{F}$ of $\cA$ be given. Write $\psi = \rho \circ E$ for some state $\rho$ on $\cB$. Since $\cB$ is simple and infinite dimensional, $\rho$ can be excised. It follows that there is a non-zero hereditary sub-\Cs{} $\cB_0 \subseteq \cB$ such that $\|h^{1/2}E(a)h^{1/2} - \rho(E(a))h\| \le \ep/2$ for all positive $h$ in $\cB_0$ with $\|h\|=1$, and for all $a \in \mathcal{F}$.  

By (ii) we can find a positive element $h \in \cB_0$ such that $\|h^{1/2}(a-E(a))h^{1/2}\| \le \ep/2$, for all $a \in \mathcal{F}$, and $\|h\|=1$. It follows that $\|h^{1/2}ah^{1/2} - \psi(a)h\| \le \ep$, for all $a \in \mathcal{F}$. This proves that $\psi$ has the relative excision property as desired.
\end{proof}

\noindent The theorem below is essentially a corollary to \cite[Lemma 7.1]{OlePed:C*-dynamicIII} (= Lemma~\ref{lm:crossed-product} (i) above). The implication (ii) $\Rightarrow$ (i) also follows from Theorem~\ref{thm:PICS} below. It is curious that
(i) and (iii) are equivalent, while this is not the case in the situation of Theorem~\ref{thm:AK} below, cf.\ Example~\ref{ex:irr-nirr}.

\begin{theorem} \label{thm:crossed-product}
Let $\cB$ be a unital simple \Cs, and let $\Gamma$ be a discrete group acting on $\cB$. Then the following conditions are equivalent:
\begin{enumerate}
\item $\cB \subseteq \cB \rtimes_{\mathrm{red}} \Gamma$ is $C^*$-irreducible,
\item the action $\Gamma \curvearrowright \cB$ is outer,
\item $\cB' \cap (\cB \rtimes_{\mathrm{red}} \Gamma)= \C$.
\end{enumerate}
Moreover, if $\cB$ has the Dixmier property (i.e., has at most one tracial state), then each of the equivalent conditions above implies that $\cB \subseteq \cB \rtimes_{\mathrm{red}} \Gamma$ has the relative Dixmier property.
\end{theorem}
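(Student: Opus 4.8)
The plan is to establish the cycle of implications (i) $\Rightarrow$ (iii) $\Rightarrow$ (ii) $\Rightarrow$ (i), and then to prove the final assertion about the relative Dixmier property directly by an averaging argument.

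First, (i) $\Rightarrow$ (iii) is immediate from Remark~\ref{rem:irr}, since any $C^*$-irreducible inclusion is irreducible. For (iii) $\Rightarrow$ (ii) I would argue by contraposition: if the action is not outer, there is $g \ne e$ with $\alpha_g = \Ad v$ for a unitary $v \in \cB$, and then for every $b \in \cB$ one computes $v^*u_g \, b = v^* \alpha_g(b) u_g = b\, v^* u_g$, so $v^* u_g \in \cB' \cap (\cB \rtimes_{\mathrm{red}} \Gamma)$, while $E(v^* u_g) = v^* E(u_g) = 0$ forces $v^* u_g \notin \C 1$; hence (iii) fails. For (ii) $\Rightarrow$ (i), assuming $\cB$ infinite dimensional, Lemma~\ref{lm:crossed-product}(i) shows the inclusion has the pinching property with respect to the faithful expectation $E$, so Proposition~\ref{prop:M} (using simplicity of $\cB$) yields $C^*$-irreducibility. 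The finite-dimensional case is degenerate: a simple $M_n$ has only inner automorphisms, so outerness of the action forces $\Gamma = \{e\}$ and the inclusion $\cB \subseteq \cB$ is trivially $C^*$-irreducible.

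For the final statement, I recall the observation following Definition~\ref{def:RDP}: since $\cB$ is assumed to have the Dixmier property, it suffices to produce, for each $a \in \cB \rtimes_{\mathrm{red}} \Gamma$, a scalar $\lambda 1$ in $C_\cB(a)$. I would first reduce to $a = \sum_{t \in F} b_t u_t$ in the dense $*$-subalgebra, with $b_e = E(a)$, since conjugation by unitaries is isometric and convex combinations are contractive. The averaging then runs in two stages, both carried out with unitaries from $\cB$ and therefore composable inside $C_\cB(a)$. Stage one uses the Dixmier property of $\cB$ to average the diagonal coefficient $b_e$ to within $\ep$ of a scalar $\lambda_0 1$; applying the same convex combination to all of $a$ only alters the off-diagonal coefficients, which stay bounded. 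Stage two kills the off-diagonal terms: for each $t \in F \setminus \{e\}$ the automorphism $\alpha_t$ is outer, so by the Kishimoto-type averaging of \cite{Kis:Auto} one finds unitaries $w_k \in \cB$ and weights $\nu_k$ with $\|\sum_k \nu_k w_k^* c\, \alpha_t(w_k)\|$ arbitrarily small; iterating over the finitely many $t$ works because any coefficient already made small stays small under further averaging (averaging is norm-contractive on each coefficient).

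The decisive point is that a scalar is fixed by conjugation, so Stage two moves the already-good diagonal $\approx \lambda_0 1$ by at most the Stage-one error $\ep$: after both stages one obtains a convex combination $\sum_j \gamma_j z_j^* a z_j$, with $z_j \in \cU(\cB)$, satisfying $\|\sum_j \gamma_j z_j^* a z_j - \lambda_0 1\| < 2\ep$. Letting $\ep \to 0$ yields scalars $\lambda_0(\ep)$, bounded by $\|a\|$, together with elements of $C_\cB(a)$ approaching $\lambda_0(\ep) 1$; extracting a convergent subsequence $\lambda_0(\ep) \to \lambda$ and using that $C_\cB(a)$ is closed gives $\lambda 1 \in C_\cB(a)$, as required. (When $\cB$ has a trace $\tau$ one may take $\lambda_0 = \tau(E(a))$ throughout and omit the subsequence step.) The main obstacle is the Kishimoto averaging step together with the bookkeeping needed to combine the two stages without the diagonal drifting — which is precisely why it is essential to bring the diagonal near a conjugation-invariant scalar rather than merely into $\cB$.
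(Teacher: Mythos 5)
Your proof of the equivalence of (i), (ii) and (iii) is correct and follows essentially the paper's own route: (i) $\Rightarrow$ (iii) is Remark~\ref{rem:irr}; (iii) $\Rightarrow$ (ii) is the paper's argument verbatim (your observation that $E(v^*u_g)=0$ rules out $v^*u_g \in \C 1$ is a clean way to finish it); and (ii) $\Rightarrow$ (i) rests on Lemma~\ref{lm:crossed-product}(i), your appeal to Proposition~\ref{prop:M} being just a packaged form of the paper's direct appeal to Lemma~\ref{lm:C*-irr}. Your separate treatment of finite-dimensional $\cB$ is actually an improvement: Lemma~\ref{lm:crossed-product} assumes $\cB$ infinite dimensional, a hypothesis absent from the theorem, and the paper passes over this degenerate case in silence, whereas you correctly dispose of it (every automorphism of $M_n$ is inner, so outerness forces $\Gamma = \{e\}$).

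Where you genuinely diverge is the ``moreover'' clause, which the paper dispatches in one line by citing \cite[Corollary 4.1]{Popa:JFA2000}, and which you instead prove directly: reduce to finite sums $\sum_{t \in F} b_t u_t$, bring the diagonal coefficient near a scalar using the Dixmier property of $\cB$, then kill the off-diagonal coefficients by twisted unitary averaging, exploiting that scalars are conjugation-invariant so the two stages compose; your closing compactness argument (bounded scalars, closedness of $C_\cB(a)$) is also fine. The one correction needed is the citation in Stage two: what you need --- unitaries $w_k \in \cU(\cB)$ and weights $\nu_k$ with $\bigl\| \sum_k \nu_k w_k^* c \, \alpha_t(w_k)\bigr\|$ arbitrarily small --- is \emph{not} what \cite{Kis:Auto} provides. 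Kishimoto's results give pinching by positive elements of hereditary subalgebras (this is what underlies Lemma~\ref{lm:crossed-product}), not averaging over unitaries. The unitary-averaging statement is precisely the paper's Lemma~\ref{lm:Popa}, due to Popa, whose known proof \emph{requires} the Dixmier property of $\cB$; the paper explicitly notes that its validity without that hypothesis is unclear. Since the ``moreover'' clause assumes the Dixmier property, Lemma~\ref{lm:Popa} is available and your argument goes through with that substitution --- it even handles all $t \in F\setminus\{e\}$ simultaneously, removing your iteration bookkeeping. In effect you have unpacked the proof of Popa's Corollary 4.1 that the paper's citation conceals, which makes the exposition self-contained at the cost of length.
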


\begin{proof} (ii) $\Rightarrow$ (i) follows from Lemma~\ref{lm:crossed-product} and Lemma~\ref{lm:C*-irr}, and (i) $\Rightarrow$ (iii) holds for all inclusions, cf.\ Remark~\ref{rem:irr}. (iii) $\Rightarrow$ (ii). Denote the action $\Gamma \curvearrowright \cB$ by $\alpha$, and suppose that $\alpha_t$ is inner, for some $t \ne e$. Then there is a unitary $u \in \cB$ such that $uxu^* = \alpha_t(x) = u_t x u_t^*$, for $x \in \cB$. It follows that $u^*u_t \in (\cB \rtimes_{\mathrm{red}} \Gamma) \cap \cB'$, and $u^*u_t \notin \C$ by construction of the crossed product \Cs.

The last part of the theorem follows from \cite[Corollary 4.1]{Popa:JFA2000}.
\end{proof}

\noindent As mentioned in the introduction, there is a Galois correspondence between intermediate \Cs s of the inclusions considered in Theorem~\ref{thm:crossed-product} above and subgroups of $\Gamma$, cf.\ the theorem below, which is due to Izumi, \cite{Izumi:Crelle2002} in the case where $\Gamma$ is finite, and to Cameron-Smith, \cite{CamSmith:Galois}, in the general case. 

\begin{theorem}[Izumi--Cameron-Smith]  \label{thm:PICS}
Let $\cB$ be a unital simple \Cs, and let $\Gamma$ be a discrete group acting outerly on $\cB$. Then each intermediate \Cs{} $\cD$ of the inclusion $\cB \subseteq \cB \rtimes_{\mathrm{red}} \Gamma$ is of the form $\cD = \cB \rtimes_{\mathrm{red}} \Lambda$, for some subgroup $\Lambda$ of $\Gamma$.
\end{theorem}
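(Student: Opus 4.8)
The plan is to reconstruct the subgroup $\Lambda$ from $\cD$ by Fourier analysis on the crossed product, and to reduce the whole statement to a single extraction lemma. Write $\{u_t\}_{t\in\Gamma}$ for the canonical unitaries, $\alpha$ for the action, and let $E\colon \cB\rtimes_{\mathrm{red}}\Gamma\to\cB$ be the canonical conditional expectation. Define the Fourier coefficients $E_t(x)=E(xu_t^*)\in\cB$, so that $E_e=E$, the family $(E_t(x))_{t\in\Gamma}$ separates points, and $\cA_0=\mathrm{span}\{bu_t: b\in\cB,\ t\in\Gamma\}$ is dense. Set $\Lambda=\{t\in\Gamma: u_t\in\cD\}$. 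Since $\cD$ is a unital $*$-algebra and $u_e=1$, $u_su_t=u_{st}$, $u_t^*=u_{t^{-1}}$, the set $\Lambda$ is a subgroup and $\cB\rtimes_{\mathrm{red}}\Lambda\subseteq\cD$ is immediate. Everything therefore reduces to the implication
\[
(\ast)\qquad d\in\cD,\ E_{t_0}(d)\neq 0\ \Longrightarrow\ u_{t_0}\in\cD .
\]
Indeed, $(\ast)$ says $E_t(d)=0$ for $t\notin\Lambda$; since the reduced crossed product admits a coefficient-preserving conditional expectation onto $\cB\rtimes_{\mathrm{red}}\Lambda$ and coefficients separate points, this forces $d\in\cB\rtimes_{\mathrm{red}}\Lambda$, giving $\cD\subseteq\cB\rtimes_{\mathrm{red}}\Lambda$ and hence equality.

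First I would isolate an \emph{upgrade lemma} reducing $(\ast)$ to the production of a single nonzero one-term element. Suppose $0\neq w=bu_{t_0}\in\cD$ with $b\in\cB$. Bimodularity gives $xwy=xb\,\alpha_{t_0}(y)\,u_{t_0}\in\cD$ for all $x,y\in\cB$, and $w^*w=\alpha_{t_0^{-1}}(b^*b)\in\cB^+$ is nonzero, hence full in the simple algebra $\cB$. By Lemma~\ref{lm:full} there are $x_1,\dots,x_n\in\cB$ with $\sum_j x_j^*(w^*w)x_j\ge 1$; with $s=(\sum_j x_j^*(w^*w)x_j)^{-1/2}\in\cB$ the elements $wx_js\in\cD$ have the form $b_ju_{t_0}$ ($b_j\in\cB$) and satisfy $\sum_j (b_ju_{t_0})^*(b_ju_{t_0})=1$, i.e.\ $\sum_j b_j^*b_j=1$. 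Then $\sum_j b_j^*(b_ju_{t_0})=u_{t_0}\in\cD$, since each summand lies in $\cD$. Thus $(\ast)$ is equivalent to producing one nonzero $bu_{t_0}\in\cD$.

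The production step is where outerness of the action enters. Given $d\in\cD$ with $E_{t_0}(d)\neq 0$, I would average by maps that live on $\cB$ and hence preserve $\cD$: for $v_1,\dots,v_N\in\cU(\cB)$ put
\[
\Phi(x)=\tfrac1N\sum_{i=1}^N v_i\,x\,\alpha_{t_0^{-1}}(v_i)^* .
\]
This $\Phi$ is a contraction of $\cB\rtimes_{\mathrm{red}}\Gamma$ mapping $\cD$ into $\cD$, and on coefficients $E_t(\Phi(x))=\tfrac1N\sum_i v_i E_t(x)\alpha_{tt_0^{-1}}(v_i)^*$. The $t_0$-coefficient is the honest conjugation average $\tfrac1N\sum_i v_iE_{t_0}(x)v_i^*$, whereas for $t\neq t_0$ the element $g=tt_0^{-1}\neq e$ appears and $\alpha_g$ is outer. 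The mechanism behind Lemma~\ref{lm:crossed-product}(i) (Kishimoto, Olesen--Pedersen) shows that, $\alpha_g$ being outer on the simple $\cB$, the $v_i$ can be chosen so that the off-$t_0$ averages $\tfrac1N\sum_i v_iE_t(d)\alpha_g(v_i)^*$ become uniformly small while a nonzero $t_0$-coefficient survives; passing to the limit inside the closed algebra $\cD$, using contractivity of $\Phi$ and the fact that $d$ is approximated in norm by finitely supported elements of $\cA_0$ (on which only finitely many coefficients must be suppressed), then yields a nonzero $bu_{t_0}\in\cD$.

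The hard part will be exactly this last balancing act: annihilating the infinitely many off-$t_0$ coefficients of a general $d\in\cD$ \emph{simultaneously} while guaranteeing that the $t_0$-coefficient does not itself collapse to $0$ under the same conjugation average — a genuine danger when $\cB$ carries no trace (e.g.\ when $\cB$ absorbs $\cO_\infty$), so that conjugation averages of a nonzero element can approach $0$. Controlling this competition between the outer-twisted averages (which can be driven to $0$) and the genuine conjugation average of the $t_0$-coefficient, via a careful iterative choice of the $v_i$ together with contractivity and density of $\cA_0$, is the technical heart of the Cameron--Smith argument. For finite $\Gamma$ this obstacle is bypassed along Izumi's route: the finite-index conditional expectation and the associated dual/Watatani-index machinery furnish honest spectral projections $x\mapsto E_t(x)u_t$ that visibly preserve $\cD$, so that $E_{t_0}(d)u_{t_0}\in\cD$ directly, and $(\ast)$ follows at once.
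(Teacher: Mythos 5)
Your skeleton --- Fourier coefficients $E_t$, the subgroup $\Lambda$ read off from supports, the reduction to extracting $u_{t_0}$ from any $d \in \cD$ with $E_{t_0}(d) \neq 0$, and the twisted conjugation average $x \mapsto \frac1m\sum_j v_j x \alpha_{t_0^{-1}}(v_j)^*$ over unitaries $v_j \in \cB$ --- is exactly that of the proof given in the paper, which is due to Popa and is presented there under the extra hypothesis that $\cB$ has the Dixmier property. But your proposal has a genuine gap precisely where you flag it yourself: the production of a nonzero one-term element $b u_{t_0} \in \cD$ is never carried out, only delegated to ``the mechanism behind Lemma~\ref{lm:crossed-product}(i)'' and to Cameron--Smith. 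The pinching in Lemma~\ref{lm:crossed-product}(i) compresses by a single positive $h \in \cB$ on both sides and controls only the $e$-coefficient, so it does not obviously isolate a $t_0$-coefficient inside $\cD$; and your worry that the $t_0$-coefficient may collapse under the conjugation average is a real obstruction to the argument in the order you run it.

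What you missed --- and it is the decisive trick in the paper's proof --- is the order of operations: perform your normalization step \emph{before} averaging, not after. Given $d \in \cD$ with $a_{t_0} := E_{t_0}(d) \neq 0$, algebraic simplicity of $\cB$ yields $c_i, d_i \in \cB$ with $\sum_i c_i a_{t_0} d_i = 1$; then $y = \sum_i c_i \, d \, \alpha_{t_0}^{-1}(d_i)$ lies in $\cD$ (the $c_i$ and $\alpha_{t_0}^{-1}(d_i)$ are in $\cB \subseteq \cD$) and has $t_0$-coefficient exactly the scalar $1$. Now the twisted average fixes the $t_0$-term \emph{exactly, term by term}: $v_j u_{t_0} \alpha_{t_0^{-1}}(v_j)^* = v_j v_j^* u_{t_0} = u_{t_0}$. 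There is no balancing act and no trace is needed to keep the $t_0$-coefficient alive; the only remaining task is to make the finitely many other coefficients of a finite-support approximant of $y$ small, which is precisely what Lemma~\ref{lm:Popa} delivers, and contractivity of the average handles the approximation error. (This also disposes of your feared traceless case: a simple \Cs{} absorbing $\cO_\infty$ has no tracial state at all, hence at most one, hence the Dixmier property by Haagerup--Zsid\'o, so the paper's proof covers it.) Note finally that the paper, too, proves the theorem only when $\cB$ has the Dixmier property --- Lemma~\ref{lm:Popa} is only established in that generality --- and cites Izumi and Cameron--Smith for the general statement; so your instinct that the general case needs their machinery is consistent with the paper, but within the Dixmier case the step you declare to be ``the technical heart'' evaporates once the normalization is done first.
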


\noindent With his permission we give a simple proof, due to Sorin Popa, of this theorem in the case where $\cB$ has the Dixmier property (i.e., has at most one tracial state). The proof uses the following lemma, of independent interest, that is embedded in the proof of \cite[Corollary 4.1]{Popa:JFA2000}:

\begin{lemma}[Popa] \label{lm:Popa}
Let $\cB$ be a unital simple \Cs{} with the Dixmier property, let $\alpha_1, \dots, \alpha_n$ be outer automorphisms on $\cB$, let $b_1, \dots, b_n \in \cB$, and let $\ep >0$. Then there exist $m \ge 1$ and unitaries $v_1, \dots, v_m \in \cB$ such that
$$\Big\| \frac{1}{m} \sum_{j=1}^m v_j b_i \alpha_i(v_j)^*\Big\|<\ep, \qquad i = 1,2, \dots, n.$$
\end{lemma}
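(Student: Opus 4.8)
The plan is to reduce the several-automorphism statement to the case $n=1$ by a composition argument, and then to settle the one-variable case using proper outerness of $\alpha$ together with the Dixmier property of $\cB$. The reduction rests on two observations about the averaging operations $c \mapsto \frac{1}{m}\sum_k v_k\, c\, \alpha_i(v_k)^*$ attached to a finite family $\{v_1,\dots,v_m\}\subseteq \cU(\cB)$. First, each such operation is norm-nonincreasing in every coordinate, since every summand $v_k c\,\alpha_i(v_k)^*$ has the same norm as $c$. Second, these operations compose to an averaging operation over the product family: applying $\{v_k\}$ and then $\{w_l\}$ sends $c$ to $\frac{1}{pm}\sum_{l,k}(w_l v_k)\,c\,\alpha_i(w_l v_k)^*$, because $\alpha_i(v_k)^*\alpha_i(w_l)^*=\alpha_i(w_l v_k)^*$. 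Granting the case $n=1$, I would first pick a family making the first coordinate satisfy $\|\frac1m\sum_k v_k b_1 \alpha_1(v_k)^*\|<\ep$, then apply a further family shrinking the second coordinate below $\ep$; by the composition identity the outcome is a genuine average over the enlarged product family (still unitaries in $\cB$), and by norm-nonincreasingness the first coordinate stays below $\ep$. Iterating $n$ times produces one family $\{v_j\}\subseteq\cU(\cB)$ that works for all $i$ simultaneously.

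It then remains to treat a single outer automorphism $\alpha$ and a single $b\in\cB$: to find $v_1,\dots,v_m\in\cU(\cB)$ with $\|\frac1m\sum_j v_j b\,\alpha(v_j)^*\|<\ep$. I would reformulate this inside the reduced crossed product $\cM=\cB\rtimes_\alpha\Z$, with implementing unitary $\lambda$ and canonical expectation $E\colon\cM\to\cB$. There one has $v_j b\,\alpha(v_j)^*=\big(v_j (b\lambda) v_j^*\big)\lambda^*$, so setting $x=b\lambda$ (which satisfies $E(x)=0$) the quantity to be made small equals $\|\frac1m\sum_j v_j x v_j^*\|$. Thus the one-variable statement is exactly that $0$ lies in the closed convex hull of $\{v x v^* : v\in\cU(\cB)\}$; equivalently, the $\cB$-conjugation average of the off-diagonal element $x$ can be driven to zero in operator norm. (Note this cannot be seen in $\|\cdot\|_2$, where the average is norm-preserving; outerness is what creates room in operator norm.)

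The engine for this last step, which I expect to be the main obstacle, is to combine outerness with the Dixmier property. Proper outerness of $\alpha$ yields, via the Olesen--Pedersen estimate \cite[Lemma 7.1]{OlePed:C*-dynamicIII} underlying Lemma~\ref{lm:crossed-product} applied to the single element $x=b\lambda$, that in every nonzero hereditary sub-\Cs{} of $\cB$ there is a positive norm-one $h$ with $\|h b\,\alpha(h)\|=\|hxh\|<\ep$; that is, $x$ has arbitrarily small compressions supported in arbitrarily small corners of $\cB$. I would then invoke the Dixmier property of the simple algebra $\cB$ to manufacture finitely many unitaries $v_1,\dots,v_m$ whose conjugates $v_j^*(\cdot)v_j$ effectively tile the unit by copies of such a small corner, so that $\frac1m\sum_j v_j x v_j^*$ is controlled by the negligible compressions $hxh$ and hence has norm below $\ep$. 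Making this passage from small local compressions to a small global unitary average precise --- i.e.\ producing the averaging family from the Dixmier/comparison structure of $\cB$ --- is the delicate part, and is exactly the content embedded in Popa's proof of \cite[Corollary 4.1]{Popa:JFA2000}. The argument simplifies when $\cB$ has a unique tracial state, where a minimal $\|\cdot\|_2$-norm element of the conjugation hull can be exploited to locate $0$; the trace-free case genuinely requires the operator-norm Dixmier machinery of $\cB$.
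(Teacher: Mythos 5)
The paper offers no proof of Lemma~\ref{lm:Popa} for you to be measured against: it states the lemma as being ``embedded in the proof of \cite[Corollary 4.1]{Popa:JFA2000}'' and gives no argument, so your proposal has to be judged on its own. Its preparatory steps are correct. The reduction to $n=1$ works exactly as you say: each map $c \mapsto \frac{1}{m}\sum_k v_k c\,\alpha_i(v_k)^*$ is norm-nonincreasing, and two such maps compose to the map attached to the product family of unitaries because $\alpha_i(v_k)^*\alpha_i(w_l)^* = \alpha_i(w_l v_k)^*$, so one can shrink the coordinates successively without losing earlier gains. The crossed-product reformulation is also correct: with $x = b\lambda \in \cB\rtimes_\alpha\Z$ one has $E(x)=0$ and $\|\frac{1}{m}\sum_j v_j b\,\alpha(v_j)^*\| = \|\frac{1}{m}\sum_j v_j x v_j^*\|$, so the $n=1$ case is the assertion that $0$ lies in the norm-closed convex hull of the $\cU(\cB)$-conjugates of $x$. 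One caveat: a single outer automorphism may have inner powers, so the $\Z$-action it generates need not be outer, and Lemma~\ref{lm:crossed-product} (equivalently \cite[Lemma 7.1]{OlePed:C*-dynamicIII}) is stated for outer actions; what you actually need, and all you need, is the element-wise Kishimoto estimate for the single properly outer automorphism $\alpha$ --- in every nonzero hereditary sub-\Cs{} of $\cB$ there is a positive norm-one $h$ with $\|h b\,\alpha(h)\| < \ep$ --- which holds by \cite{Kis:Auto} and makes no reference to the crossed product.

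The genuine gap is the step you yourself flag as delicate: passing from small compressions $hxh$ in arbitrary corners to an operator-norm-small average $\frac{1}{m}\sum_j v_j x v_j^*$ over finitely many unitaries of $\cB$. That passage is not a technicality to be outsourced; it is the entire mathematical content of the lemma (the reductions preceding it are routine), and your proposal resolves it by citing the proof of \cite[Corollary 4.1]{Popa:JFA2000} --- the very source from which the paper quotes the lemma. So what you have is a correct framing of the problem rather than a proof. Two side remarks are also off. First, averages $\frac{1}{m}\sum_j v_j x v_j^*$ are not $\|\cdot\|_2$-isometric (only the individual conjugations are). Second, and more importantly, the minimal-$\|\cdot\|_2$-element argument you invoke for the unique-trace case only places $0$ in the $\|\cdot\|_2$-closure of $\mathrm{conv}\{vxv^* : v \in \cU(\cB)\}$, whereas the lemma demands $0$ in the \emph{operator-norm} closure; small $\|\cdot\|_2$-norm gives no control of the operator norm, so the tracial case does not simplify in the way you suggest --- bridging exactly that gap between the two norms is what Popa's machinery is for.
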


\noindent It seems likely that this lemma holds more generally for all unital simple \Cs s (without the assumption of the Dixmier property), in which case the proof below would hold in this general case as well.

\bigskip
\noindent
{\it{Proof of Theorem~\ref{thm:PICS}}} (in the case where $\cB$ has the Dixmier property).  Denote the action of $\Gamma$ on $\cB$ by $\alpha$, and let $t \mapsto u_t$, $t \in \Gamma$, be the unitary representation of $\Gamma$ in the crossed product. 

For each $x \in  \cB \rtimes_{\mathrm{red}} \Gamma$, written formally as $x = \sum_{t \in \Gamma} a_t u_t$, with $a_t \in \cB$, let $\mathrm{supp}(x)$ be the set of those $t \in \Gamma$ for which $a_t \ne 0$. It suffices to show that $u_s \in C^*(\cB,x)$ whenever $x \in  \cB \rtimes_{\mathrm{red}} \Gamma$ and $s \in \mathrm{supp}(x)$. Indeed, if $\cD$ is an intermediate \Cs, then let $\Lambda$ be the subgroup of $\Gamma$ spanned by $\bigcup_{x \in \cD} \mathrm{supp}(x)$. It then easily follows from the claim above that $\cD = \cB \rtimes_{\mathrm{red}} \Lambda$. 

Take $x \in \cB \rtimes_{\mathrm{red}} \Gamma$, $s \in \mathrm{supp}(x)$, and $\ep >0$. As above, write $x = \sum_{t \in \Gamma} a_t u_t$ with $a_s \ne 0$. By simplicity of $\cB$, which implies that $\cB$ is algebraically simple, there are $n \ge 1$ and elements $c_1, \dots, c_n, d_1, \dots, d_n \in \cB$ such that $1 = \sum_{i=1}^n c_ia_sd_i$. Set $y = \sum_{i=1}^n c_i x \alpha_s^{-1}(d_i)$. Then $y \in C^*(\cB,x)$, and $y = \sum_{t \in \Gamma} b_t u_t$, with $b_t \in \cB$ and $b_s=1$. 

Choose $y_0 \in \cB \rtimes_{\mathrm{red}} \Gamma$ such that 
$\|y-y_0\| \le \ep/3$ and such that $F:=\mathrm{supp}(y_0)$ is finite. Write $y_0 = \sum_{t \in F} b'_tu_t$, for some $b'_t \in \cB$, and note that $\|1-b'_s\|\le \ep/3$. Use Lemma~\ref{lm:Popa} to find unitaries $v_1, \dots, v_m$ in $\cB$ such that $\|m^{-1} \sum_{j=1}^mv_jb'_t\alpha_{ts^{-1}}(v_j)^*\| \le \ep(3|F|)^{-1}$, for all $t \in F \setminus \{s\}$. It then follows that 
$$\big\|\frac{1}{m} \sum_{j=1}^m v_j(y_0-b'_su_s)\alpha_{s^{-1}}(v_j)^*\big\| \le \ep/3,$$
 and hence that
 $$\big\| \frac{1}{m} \sum_{j=1}^m v_j y\alpha_{s^{-1}}(v_j)^*-u_s\big\| =
 \big\|\frac{1}{m} \sum_{j=1}^m v_j(y-u_s)\alpha_{s^{-1}}(v_j)^*\big\| \le \ep.$$
 Since $\sum_{j=1}^m v_jy\alpha_{s^{-1}}(v_j)^*$ belongs to $C^*(\cB,x)$, we conclude that $u_s \in C^*(\cB,x)$. 
\hfill $\square$

\begin{example} \label{ex:On} For $2 \le n < \infty$ consider the Cuntz algebra $\cO_n$ and its sub-\Cs{} $\cB_n$ isomorphic to the UHF-algebra of type $n^\infty$, which arises as the fixed point algebra under the canonical circle action of $\cO_n$. Let $E \colon \cO_n \to \cB_n$ be the  canonical conditional expectation (obtained by integrating with respect to the circle action). Then $E$ is faithful and has the pinching property, cf.\ \cite{Cuntz:On} (use the projection $Q$ constructed in the proof of Proposition 1.7). Hence $\cB_n \subseteq \cO_n$ is $C^*$-irreducible. 

It is well-known that $\cO_n = C^*(\cB_n,s_1)$, and that the isometry $s_1$ induces a (non-unital) endomorphism $\rho$ on $\cB_n$ by $\rho(b) = s_1bs_1^*$. It this sense we can write $\cO_n$ as a crossed product $\cB_n \rtimes_\rho \N$ over the semigroup $\N$. Similar to the situation of Theorem~\ref{thm:PICS}, each proper intermediate \Cs{} of the inclusion $\cB_n \subseteq \cO_n$ is equal to $\cB_n \rtimes_\rho d\N$, for some $d \ge 2$. This claim can be proved using the same methods as in the proof of Theorem~\ref{thm:PICS} presented above (details can be found on the website of the author). 
The crossed product $\cB_n \rtimes_\rho d\N$ is equal to $C^*(\cB_n,s_1^d)$ and also to the fixed-point algebra $\cO_n^{\Z/d}$ with respect to the order $d$ automorphism on $\cO_n$ given by $s_j \mapsto \omega s_j$, where $\omega$ is a primitive $d$th root of the unit. 
\end{example}

\noindent We end this section by considering inclusions of the form $C^*_\lambda(\Gamma) \subseteq \cB \rtimes_{\mathrm{red}} \Gamma$, and we offer in the theorem below a sharpening of \cite[Theorem 1.1]{Amrutam-Kalantar-ETDS2020}.

\newpage 
\begin{theorem} \label{thm:AK}
Let $\Gamma$ be a discrete countable $C^*$-simple group acting on a unital \Cs{} $\cB$. Then the following conditions are equivalent:
\begin{enumerate}
\item $C^*_\lambda(\Gamma) \subseteq \cB \rtimes_{\mathrm{red}} \Gamma$ is $C^*$-irreducible,
\item for each non-zero positive  $a \in \cB$ there exist $t_1, \dots, t_n \in \Gamma$ such that $\sum_{j=1}^n u_{t_j}^* a u_{t_j} \ge 1_\cB$,
\item each state  $\phi$ on $\cB$ is $\Gamma$-faithful, i.e., if $\phi(u_t^*au_t)=0$, for some positive 
$a \in \cB$ and for all $t \in \Gamma$, then $a =0$,
\item there exists $\mu \in \mathrm{Prob}(\Gamma)$ such that each $\mu$-stationary state $\phi$ on $\cB$ is faithful.
\end{enumerate}
\end{theorem}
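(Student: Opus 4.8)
The plan is to prove the cycle $(i)\Leftrightarrow(ii)$, $(ii)\Leftrightarrow(iii)$ and $(iii)\Leftrightarrow(iv)$ separately, concentrating the real work in $(ii)\Rightarrow(i)$, which is where $C^*$-simplicity of $\Gamma$ is needed. Throughout I write $E\colon\cB\rtimes_{\mathrm{red}}\Gamma\to\cB$ for the canonical (faithful) conditional expectation and use that $u_t^*au_t\in\cB$ whenever $a\in\cB$. For $(i)\Rightarrow(ii)$: if the inclusion is $C^*$-irreducible then, by Proposition~\ref{prop:C*-irr}, every non-zero positive $a\in\cB\subseteq\cB\rtimes_{\mathrm{red}}\Gamma$ is full relative to $C^*_\lambda(\Gamma)$, so applying Lemma~\ref{lm:rel-full} with $\mathcal W=\{u_t:t\in\Gamma\}$ (whose span is dense in $C^*_\lambda(\Gamma)$) yields $t_1,\dots,t_n$ with $\sum_j u_{t_j}^*au_{t_j}\ge 1_\cB$, each summand lying in $\cB$. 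The implication $(ii)\Rightarrow(iii)$ is immediate: if $a\ge0$ satisfies $\phi(u_t^*au_t)=0$ for all $t$, then applying $\phi$ to an inequality $\sum_j u_{t_j}^*au_{t_j}\ge1$ forces $0\ge1$, so no such inequality exists and $a=0$.

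For $(ii)\Rightarrow(i)$ I would verify the criterion of Proposition~\ref{prop:C*-irr}, namely that each non-zero positive $y\in\cB\rtimes_{\mathrm{red}}\Gamma$ is full relative to $C^*_\lambda(\Gamma)$. Since $E$ is faithful, $a:=E(y)$ is a non-zero positive element of $\cB$, so by (ii) there are $s_1,\dots,s_n$ with $D:=\sum_j u_{s_j}^*au_{s_j}\ge 1_\cB$. Put $W:=\sum_j u_{s_j}^*yu_{s_j}$; then $E(W)=D$, so $R:=W-D$ satisfies $E(R)=0$. The crucial point is a generalized Powers averaging: using $C^*$-simplicity one finds $t_1,\dots,t_m\in\Gamma$ with $\|\tfrac1m\sum_k u_{t_k}^*Ru_{t_k}\|<\tfrac12$. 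Because conjugation preserves order and $D\ge1$, the element $\tfrac1m\sum_k u_{t_k}^*Wu_{t_k}=\tfrac1m\sum_k u_{t_k}^*Du_{t_k}+\tfrac1m\sum_k u_{t_k}^*Ru_{t_k}$ dominates $1-\tfrac12=\tfrac12$; rescaling by $2$ and unwinding the two rounds of conjugation gives $\sum_{k,j}x_{k,j}^*yx_{k,j}\ge 1_\cB$ with $x_{k,j}=\sqrt{2/m}\,u_{s_jt_k}\in C^*_\lambda(\Gamma)$, so $y$ is full relative to $C^*_\lambda(\Gamma)$.

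The remaining elementary implications I would handle as follows. For $(iii)\Rightarrow(ii)$, argue by contraposition: if some non-zero $a\in\cB^+$ admits no finite inequality $\sum_j u_{t_j}^*au_{t_j}\ge1_\cB$, then (after clearing denominators) $1_\cB$ lies outside the closed convex cone $\overline{K-\cB^+}$, where $K$ is the cone generated by the orbit $\{u_t^*au_t:t\in\Gamma\}$; a Hahn--Banach separation in $\cB_{\mathrm{sa}}$ then produces a state $\phi$ vanishing on $K$, i.e.\ $\phi(u_t^*au_t)=0$ for all $t$ with $a\ne0$, contradicting (iii). For $(iii)\Rightarrow(iv)$, choose any $\mu\in\mathrm{Prob}(\Gamma)$ whose support is symmetric and generates $\Gamma$; if $\phi$ is $\mu$-stationary with $\phi(a)=0$ for some $a\ge0$, then stationarity and positivity force $\phi(u_s^*au_s)=0$ for every $s$ in the support, and iterating over words gives $\phi(u_t^*au_t)=0$ for all $t\in\Gamma$, whence $a=0$ by (iii). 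Conversely, for $(iv)\Rightarrow(iii)$, a non-$\Gamma$-faithful state witnessed by some $a\ge0$ has each $\mu^{*n}*\phi$ annihilating $a$, so any weak$^*$ cluster point of the Ces\`aro averages $\tfrac1N\sum_{n<N}\mu^{*n}*\phi$ is $\mu$-stationary and annihilates $a$, hence is not faithful.

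The main obstacle is the generalized Powers averaging invoked in $(ii)\Rightarrow(i)$: Theorem~\ref{thm:C*-simple}(iv) supplies such averaging only for elements of $C^*_\lambda(\Gamma)$, whereas here $R$ has coefficients in $\cB$ and one must drive $\tfrac1m\sum_k u_{t_k}^*Ru_{t_k}$ to $0$ in norm under the sole hypothesis $E(R)=0$. I would prove this by the boundary technique already used in the proof of Theorem~\ref{thm:C*-group}: reduce $R$ to finite support, pass to the Furstenberg boundary $\partial_F\Gamma$, on which $\Gamma$ acts freely, and apply Haagerup's argument (in the spirit of \cite[Lemma 3.1, Theorem 4.5]{Haagerup:new-look}) that freeness forces the off-diagonal terms to average to $0$; equivalently, a $TT^*$/Powers combinatorial estimate shows that conjugating the uniformly bounded coefficients does not obstruct the cancellation of the unitaries $u_{t_k}^*gu_{t_k}$ for $g\ne e$. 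An attractive alternative, available because $C^*_\lambda(\Gamma)$ is simple, would be to bypass the explicit averaging and instead verify the relative excision property with respect to a faithful state manufactured from a faithful stationary state, and then conclude via Proposition~\ref{prop:M}; this route, however, rests on the same boundary input.
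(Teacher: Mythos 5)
Your proposal is correct and follows essentially the same route as the paper's proof: (i)$\Rightarrow$(ii) via Proposition~\ref{prop:C*-irr} and Lemma~\ref{lm:rel-full} applied to $\mathcal{W}=\{u_t : t\in\Gamma\}$, an elementary state-existence argument for (iii)$\Rightarrow$(ii) (you use Hahn--Banach separation, the paper uses weak$^*$ compactness of the sets of states annihilating finite orbit sums), Ces\`aro averages for (iii)$\Leftrightarrow$(iv), and, for (ii)$\Rightarrow$(i), the identical scheme of conjugating so that the canonical expectation dominates $1$, passing to a finite-support approximant with the same expectation, and Powers-averaging away the part annihilated by $E$. The ``generalized Powers averaging'' you rightly single out as the main obstacle is resolved in the paper not by the Furstenberg-boundary argument you sketch but simply by citing \cite[Lemma~2.1]{Amrutam-Kalantar-ETDS2020} --- the estimate $\bigl\|\frac{1}{n}\sum_{j=1}^n u_{t_j}^* a u_r u_{t_j}\bigr\| \le \|a\|\,\bigl\|\frac{1}{n}\sum_{j=1}^n u_{t_j^{-1}r t_j}\bigr\|$ for $a\in\cB$ and $r,t_1,\dots,t_n\in\Gamma$ --- which, combined with Theorem~\ref{thm:C*-simple}(iv) applied simultaneously to the finitely many $r\ne e$ in the support of the approximant, gives exactly the cancellation you need; since the paper does not prove that lemma either, your proposal matches the paper's proof modulo this one citation.
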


\noindent A state 
$\phi$ on $\cB$ is \emph{$\mu$-stationary} if $\mu * \phi = \phi$, where $\mu * \phi = \sum_{t \in \Gamma} \mu(t) \, t.\phi$ is the convolution of $\phi$ by $\mu$, and where $(t.\phi)(a) = \phi(u_t^*au_t)$, for $t \in \Gamma$ and $a\in \cB$.

It was shown in \cite[Theorem 1.1]{Amrutam-Kalantar-ETDS2020} that (iv) $\Rightarrow$ (i) holds with the extra assumption that $\mu$ is $C^*$-simple (i.e., the canonical trace $\tau_0$ on $C^*_\lambda(\Gamma)$ is the only $\mu$-invariant state on $C^*_\lambda(\Gamma)$). The condition in \cite[Theorem 1.1]{Amrutam-Kalantar-ETDS2020} is hence formally stronger than the condition in (iv). As shown in the proof of (iii) $\Rightarrow$ (iv) below, one can in (iv) take any $\mu \in \mathrm{Prob}(\Gamma)$ as long as the support of $\mu$ generates $\Gamma$ as a semi-group.

In the case of an abelian \Cs{} $\cB$ condition (ii) above is equivalent to the action being minimal (cf.\ \cite[Corollary 1.2]{Amrutam-Kalantar-ETDS2020}). In general, condition (ii) implies minimality of the action, and it can be viewed as a strong ``mixing property''. Recall that it was shown in 
\cite[Theorem 1.8]{BKKO} that $\cB \rtimes_{\mathrm{red}} \Gamma$ is simple whenever $\Gamma$ is $C^*$-simple and the action $\Gamma \curvearrowright \cB$ is minimal.

It was shown in \cite{Amrutam:2019} that each intermediate \Cs{} of the inclusion $C^*_\lambda(\Gamma) \subseteq \cB \rtimes_{\mathrm{red}} \Gamma$ is of the form $\cB  \rtimes_{\mathrm{red}} \Gamma$ for some $\Gamma$-invariant sub-\Cs{} of $\cB$ if $\Gamma$ has the approximation property (AP) of Haagerup and Kraus, and if the kernel of the action $\Gamma \curvearrowright \cB$ is a plump subgroup (i.e., that condition (v) of Theorem~\ref{thm:C*-group} holds).  This provides yet another example showing that $C^*$-irreducible inclusions are ``rigid''.

The assumption of countability of $\Gamma$ is only used to prove the implication (iv) $\Rightarrow$ (iii).

\begin{proof} (ii) $\Rightarrow$ (i). Take a non-zero positive element $b \in \cB \rtimes_{\mathrm{red}} \Gamma$. We must prove that $b$ is full relatively to $C^*_\lambda(\Gamma)$. Note that $E(b) \in \cB$ is positive and non-zero, where $E \colon \cB \rtimes_{\mathrm{red}} \Gamma \to \cB$ is the canonical conditional expectation. 

Consider the dense subset $\mathcal{A}_0 = \mathrm{span} \{au_t : a \in \cB, t \in \Gamma\}$ of $\cB \rtimes_{\mathrm{red}} \Gamma$.   Assuming that (ii) holds, we can find $s_1, \dots, s_m \in \Gamma$ such that $\sum_{j=1}^m u_{s_j}^* E(b) u_{s_j} \ge 1_\cB$. Set $b' = \sum_{j=1}^m u_{s_j}^* b u_{s_j}$. Then $E(b') = \sum_{j=1}^m E(u_{s_j}^* b u_{s_j})= \sum_{j=1}^m u_{s_j}^* E(b) u_{s_j} \ge 1_\cB$.
Find $b'' \in \mathcal{A}_0$ such that $E(b'') = E(b') \ge 1_\cB$ and $\|b'-b''\| < 1/3$. 

It was shown in \cite[Lemma 2.1]{Amrutam-Kalantar-ETDS2020} that 
$$\big\| \frac{1}{n} \sum_{j=1}^n u_{t_j}^* a u_r u_{t_j}\big\| \le \|a\| \big\| \frac{1}{n} \sum_{j=1}^n u_{t_j^{-1}rt_j} \big\|,$$
for each $a \in \cB$ and all $r, t_1, \dots, t_n \in \Gamma$. Combining this estimate with Theorem~\ref{thm:C*-simple} shows that  one can find elements $t_1, \dots,  t_n \in \Gamma$ such that 
$$\big\| \frac{1}{n} \sum_{j=1}^n u_{t_j}^* (b'' - E(b'')) u_{t_j}\big\| < 1/3.$$
Hence  $\frac{1}{n} \sum_{j=1}^n u_{t_j}^* b'' u_{t_j}\ge \frac23 \cdot 1_\cB$, so $\frac{1}{n} \sum_{j=1}^n u_{t_j}^* b' u_{t_j} \ge \frac13 \cdot 1_\cB$. 
This shows that $b'$, and hence $b$, are full relatively to $C^*_\lambda(\Gamma)$, so (i) holds.

(iii) $\Rightarrow$ (ii). Suppose that (ii) fails and take a positive non-zero element $a \in \cB$ witnessing the failure of (ii). Then, for each finite subset $\mathcal{F} \subseteq \Gamma$, the element $\sum_{t \in \mathcal{F}} u_t^*au_t$ is non-invertible, so the set $T(\mathcal{F})$ of all states $\phi$ on $\cB$ for which  $\phi(u_t^*au_t)= 0$, for all $t \in \mathcal{F}$, is non-empty. Hence $T = \bigcap_{\mathcal{F}} T(\mathcal{F})$ is also non-empty (the intersection is over all finite subsets $\mathcal{F}$ of $\Gamma$) and any state $\phi$ in $T$ will satisfy $\phi(u_tau_t^*)=0$, for all $t \in \Gamma$. This shows that (iii) fails.

(iii) $\Rightarrow$ (ii) is trivial, and (ii) $\Rightarrow$ (i) follows from Lemma~\ref{lm:rel-full}.

(iii) $\Rightarrow$ (iv). Let $a \in \cB$ be positive. For $\mu \in \mathrm{Prob}(\Gamma)$ and for each state $\phi$ on $\cB$ we have $(\mu * \phi)(a) = 0$ if and only if $\phi(u_t^* a u_t)=0$, for all $t \in \mathrm{supp}(\mu)$. Note also that $\mathrm{supp}(\mu^k)$ is the set of all $t \in \Gamma$ that can be written as a product of $k$ elements from $\mathrm{supp}(\mu)$. Hence, if $\phi$ is $\mu$-stationary, then $\phi(a) = 0$ if and only if $\phi(u_t^* a u_t)=0$, for all $t$ in the semi-group generated by $\mathrm{supp}(\mu)$. 

Thus, if (iii) holds, and if we take $\mu \in \mathrm{Prob}(\Gamma)$ such that $\mathrm{supp}(\mu) = \Gamma$ (or such that the semigroup generated by $\mathrm{supp}(\mu)$ is $\Gamma$), then all $\mu$-stationary states on $\cB$ are faithful.

(iv) $\Rightarrow$ (iii). Suppose that (iii) does not hold, and let $\mu \in \mathrm{Prob}(\Gamma)$. Let $a \in 
\cB$ be a non-zero positive element for which there exists a state $\phi$ on $\cB$ such that $\phi(u_t^* a u_t) =0$, for all $t \in \Gamma$. Then $(\mu^k * \phi)(a) = 0$, for all $k \ge 0$. 
Set $\phi_n = n^{-1}\sum_{k=1}^n \mu^k * \phi$, and let $\phi_0$ be  a weak$^*$ accumulation point for the sequence $\{\phi_n\}_{n=1}^\infty$. Then $\phi_0$ is $\mu$-stationary and non-faithful, since $\phi_0(a) =0$.
Hence (iv) does not hold.
\end{proof}

\noindent The example below shows that $C^*$-irreducibility and usual irreducibility (trivial relative commutant) are not equivalent properties for the class of inclusions covered by Theorem~\ref{thm:AK}. We first need an elementary lemma, whose proof is left to the reader (the given estimate is hardly best possible, but suffices for our purposes).

\begin{lemma} \label{lm:D} Let $\cA$ be a unital \Cs, let $x \in \cA$, and let $f_1, f_2, f_3 \in \cA$ be pairwise orthogonal projections. Then
\vspace{-.5cm}
$$\|x\| \le 2 \sum_{j=1}^3  \|(1-f_j)x(1-f_j)\|.$$
\end{lemma}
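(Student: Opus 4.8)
The plan is to prove the estimate $\|x\| \le 2\sum_{j=1}^3 \|(1-f_j)x(1-f_j)\|$ for pairwise orthogonal projections $f_1,f_2,f_3$ by a case analysis based on which projection, if any, a given pair of unit vectors (from the definition of the operator norm via $\|x\| = \sup |\langle x\xi,\eta\rangle|$) charges. Here I would work in the universal representation, so that $\cA \subseteq B(H)$ and the $f_j$ act as genuine orthogonal projections on $H$. Set $e_j = 1 - f_j$ and write $c_j = \|e_j x e_j\|$.

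First I would fix unit vectors $\xi,\eta \in H$ and decompose each with respect to a convenient orthogonal splitting. The key combinatorial observation is that since $f_1,f_2,f_3$ are pairwise orthogonal, any vector $\xi$ lies in at most \emph{one} of the ranges of the $f_j$ ``at a time'' in the sense that the projections $f_j\xi$ are mutually orthogonal; heuristically, $\xi$ can have large overlap with at most one $f_j$. The plan is to exploit this: for each of $\xi$ and $\eta$ there are at most two indices $j$ for which $\xi \in \ker f_j = \operatorname{ran} e_j$ is badly violated, so by the pigeonhole principle there should be at least one index $j$ such that both $\xi$ and $\eta$ are reasonably close to $\operatorname{ran} e_j = \ker f_j$, letting us bound $|\langle x\xi,\eta\rangle|$ by something controlled by $c_j = \|e_j x e_j\|$ plus correction terms.

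The cleaner route I would actually pursue is the triangle-inequality expansion. Writing $x = \sum_{j} (\text{terms involving } e_j, f_j)$ is awkward for three projections, so instead I would estimate directly: for each pair of unit vectors, choose the index $j_0$ for which $\|f_{j_0}\xi\|^2 + \|f_{j_0}\eta\|^2$ is \emph{smallest}; since the $f_j$ are pairwise orthogonal we have $\sum_j \|f_j\xi\|^2 \le \|\xi\|^2 = 1$ and likewise for $\eta$, so the minimal index satisfies $\|f_{j_0}\xi\|^2 + \|f_{j_0}\eta\|^2 \le \tfrac{2}{3}$, giving $\|f_{j_0}\xi\|, \|f_{j_0}\eta\| \le \sqrt{2/3}$. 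Then I would write $\xi = e_{j_0}\xi + f_{j_0}\xi$, $\eta = e_{j_0}\eta + f_{j_0}\eta$, expand $\langle x\xi,\eta\rangle$ into four inner products, bound the ``main'' term $\langle x e_{j_0}\xi, e_{j_0}\eta\rangle$ by $c_{j_0} \le \sum_j c_j$, and bound the three ``cross/remainder'' terms by $\|x\|$ times products of the small quantities $\|f_{j_0}\xi\|,\|f_{j_0}\eta\|$. Taking the supremum over $\xi,\eta$ then yields an inequality of the form $\|x\| \le \sum_j c_j + \kappa\|x\|$ with $\kappa < 1$ arising from the constant $2/3$, which I can rearrange into $\|x\| \le C\sum_j c_j$.

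The main obstacle will be bookkeeping the constant so that it comes out at most $2$: the crude expansion gives a factor $\kappa$ governed by $\sqrt{2/3}$ and its square, and I must verify that $1/(1-\kappa)$ (or whatever multiplicative constant the expansion produces) is bounded by $2$. Since the statement explicitly notes that the estimate is \emph{not} claimed to be optimal, I expect the pigeonhole choice of $j_0$ together with $\|f_{j_0}\xi\|^2 \le 1/3$ to leave ample slack; if the direct four-term expansion is too lossy, I would instead group $\xi = e_{j_0}\xi + f_{j_0}\xi$ only on one side and pair with the full $\eta$, or iterate, to push the constant under $2$. Verifying this numerical bound is the only genuinely delicate point; everything else is routine Cauchy--Schwarz and the orthogonality $\sum_j \|f_j\xi\|^2 \le 1$.
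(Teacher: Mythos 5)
Your strategy breaks down exactly at the point you flag as ``the only genuinely delicate point,'' and the failure is not a matter of bookkeeping slack: the constants genuinely do not close. The pigeonhole bound $\|f_{j_0}\xi\|^2+\|f_{j_0}\eta\|^2\le \tfrac23$ is attained in the worst case: take $H=\C^3$, let $f_1,f_2,f_3$ be the coordinate projections and $\xi=\eta=\tfrac{1}{\sqrt3}(1,1,1)$; then for \emph{every} index $j$ one has $\|f_j\xi\|=\|f_j\eta\|=1/\sqrt3$, so no choice of $j_0$ does better than $s=t=1/\sqrt3$. Your three remainder terms are then estimated by
$$\|x\|\bigl(s\sqrt{1-t^2}+t\sqrt{1-s^2}+st\bigr)=\|x\|\Bigl(\tfrac{2\sqrt2}{3}+\tfrac13\Bigr)\approx 1.28\,\|x\|,$$
(and by $\approx 1.49\,\|x\|$ with the cruder factors $s+t+st$), i.e.\ the coefficient $\kappa$ in your inequality $\|x\|\le c_{j_0}+\kappa\|x\|$ exceeds $1$, so the rearrangement yields nothing at all. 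Even if you improve the diagonal remainder by noting that $f_{j_0}xf_{j_0}=f_{j_0}(1-f_k)x(1-f_k)f_{j_0}$ for $k\ne j_0$, hence is bounded by $c_k$ rather than by $\|x\|st$, the two off-diagonal terms still force $\kappa=2\sqrt2/3\approx 0.94$, giving a constant $1/(1-\kappa)\approx 17.5$, nowhere near $2$. Your fallbacks do not repair this: splitting only $\xi$ and pairing with the full $\eta$ loses the main term entirely, since $\langle x(1-f_{j_0})\xi,\eta\rangle$ can only be compared with $c_{j_0}=\|(1-f_{j_0})x(1-f_{j_0})\|$ if the compression sits on \emph{both} sides of $x$, and ``iterating'' does not remove the structural obstruction visible in the $\C^3$ example.

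The missing idea is that the ``bad'' pieces should not be estimated by $\|x\|$ times a small number at all: orthogonality lets you dominate them by the \emph{other} compressions, so that no absorption argument is needed. Write $e_j=1-f_j$, $c_j=\|e_jxe_j\|$, and note that $f_i\le e_j$ whenever $i\ne j$. Choose $m$ with $c_m=\max_j c_j$ and let $\{m,p,q\}=\{1,2,3\}$. Expand
$$x=e_mxe_m+f_mxe_m+e_mxf_m+f_mxf_m.$$
The first term has norm $c_m$, and $f_mxf_m=f_m(e_pxe_p)f_m$ has norm at most $c_p$. For the off-diagonal terms write $e_m=f_p+(1-f_m-f_p)$; since $f_m,f_p\le e_q$ and $f_m,\,1-f_m-f_p\le e_p$, one gets $\|f_mxf_p\|\le c_q$ and $\|f_mx(1-f_m-f_p)\|\le c_p$, whence $\|f_mxe_m\|\le c_p+c_q$, and symmetrically $\|e_mxf_m\|\le c_p+c_q$. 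Altogether $\|x\|\le c_m+3c_p+2c_q\le 2(c_m+c_p+c_q)$, using $c_p\le c_m$. This purely algebraic argument needs no representation, no supremum over vectors, and no error absorption, which is precisely why it reaches the constant $2$ that your scheme cannot. (For the record, the paper leaves this lemma to the reader, so there is no official proof to compare against; but some such compression argument is surely what is intended.)
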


\begin{example} \label{ex:irr-nirr} We show here that any $C^*$-simple group $\Gamma$ admits an action on the Cuntz algera $\cO_\infty$ such that the inclusion $C^*_\lambda(\Gamma) \subseteq \cO_\infty \rtimes_{\mathrm{red}} \Gamma$ is irreducible, but not $C^*$-irreducible, while both algebras of the inclusion are simple.  

Choose an action of $\Gamma$  on $\N$ such that for each finite subset $F \subseteq \N$ there exists $t \in \Gamma$ such that $t.F \cap F = \emptyset$ (eg., take the action $\Gamma \curvearrowright \Gamma$ given by left multiplication, and identify the latter copy of $\Gamma$ with $\N$). Let $\alpha$ denote the action of $\Gamma$ on $\cO_\infty$ given by $\alpha_t(s_k) = s_{t.k}$, for $k \in \N$ and $t \in \Gamma$, where $\{s_k\}_{k=1}^\infty$ are the canonical generators of $\cO_\infty$.
Set $e_{k} = s_ks_k^*$. Then $\{e_{k}\}_{k=1}^\infty$ are pairwise orthogonal projections, and $\alpha_t(e_{k}) = e_{t.k}$. We conclude that  (ii) in Theorem~\ref{thm:AK} does not hold with $a = e_{1}$, so $C^*_\lambda(\Gamma) \subseteq \cO_\infty \rtimes_{\mathrm{red}} \Gamma$ is not $C^*$-irreducible.

Since $\Gamma$ is icc, being  $C^*$-simple, it follows that $C^*_\lambda(\Gamma)' \cap (\cO_\infty \rtimes_{\mathrm{red}} \Gamma) = \cO_\infty^\Gamma$.  We claim that $\cO_\infty^\Gamma = \C$. Take $x \in \cO_\infty^\Gamma$. Let $\cA_0$ be the  dense $^*$-algebra generated by the isometries $\{s_k\}_{k=1}^\infty$. Let $\ep >0$, and choose $y\in \cA_0$ with $\|x-y\| \le \ep$. Then there is a finite set $\mathcal{F}$ of  multi-indices  in $\bigcup_{\ell \ge 0} \N^\ell$ such that 
$$y = \alpha_0 \cdot 1 + y_0,  \quad y_0 = \sum_{(\mu,\nu) \in \mathcal{G}} \alpha_{\mu,\nu} \, s_\mu s_\nu^*, 
\qquad \alpha_0, \alpha_{\mu,\nu} \in \C,$$
where $\mathcal{G} = (\mathcal{F} \times \mathcal{F})\setminus \{(\emptyset,\emptyset)\}$. 
 Let $F \subseteq \N$ be the union of the supports of the multi-indices in $\mathcal{F}$. Set $e = \sum_{k \in F} e_{k}$. Then $s_\mu = e s_\mu$ and $s_\nu^* = s_\nu^*e$, for all  $\mu, \nu \in \mathcal{F} \setminus \{\emptyset\}$. It follows that $(1-e)(y-\alpha_0 \cdot 1)(1-e) = (1-e)y_0(1-e)=0$. 

Choose $t_1,t_2 \in \Gamma$ such that the sets $F, t_1.F, t_2.F$ are pairwise disjoint. Set $f_j = \alpha_{t_j}(e) = \sum_{k \in t_j.F} e_{k}$. Then 
$$(1-f_j)(y-\alpha_0 \cdot 1)(1-f_j) = \alpha_{t_j}\big((1-e)(\alpha_{t_j}^{-1}(y)-\alpha_0 \cdot 1-y+\alpha_0 \cdot 1)(1-e)\big),$$
so $\|(1-f_j)(y-\alpha_0 \cdot 1)(1-f_j)\| \le \| \alpha_{t_j}^{-1}(y)-y\| \le 2\ep$, for $j=1,2$. By Lemma~\ref{lm:D} we conclude that $\|y-\alpha_0 \cdot 1\| \le 8\ep$. This shows that $\mathrm{dist}(x,\C) \le 9\ep$, so $x \in \C$.
\end{example}

\section{Inductive limits} \label{sec:indlimit}

\noindent In this section we consider when a unital inclusion of \Cs{} arising from inductive limits is $C^*$-irreducible. The general set-up is as follows: Given a commutative diagram:

\begin{equation} \label{eq:indlimit}
\begin{split}
\xymatrix@R-.3pc{\cB_1 \ar[r]^-{\mu_1} \ar[d]_-{\iota_1} & \cB_2 \ar[r]^-{\mu_2} \ar[d]_-{\iota_2} & \cB_3 \ar[r]^-{\mu_3} \ar[d]_-{\iota_3}  & \cdots \ar[r] & 
\cB  \ar[d]^\iota \\ 
\cA_1 \ar[r]^{\lambda_1} & \cA_2 \ar[r]^-{\lambda_2} & \cA_3 \ar[r]^-{\lambda_3} & \cdots \ar[r] & \cA
}
\end{split}
\end{equation}
where $\cB$ and $\cA$ are the inductive limit \Cs s of the sequence of \Cs s in the first, respectively, the second row. We will assume that all maps in the diagram are injective. Let $\mu_{m,n} \colon \cB_n \to \cB_m$ and $\mu_{\infty,n} \colon \cB_n \to \cB$ denote the (composed) connecting maps, and likewise for $\lambda_{m,n} \colon \cA_n \to \cA_m$ and $\lambda_{\infty,n} \colon \cA_n \to \cA$.

We say that the diagram is \emph{regular} if $\iota_n(\cB_n) = \cA_n \cap \lambda_{\infty,n}^{-1}(\iota(\cB))$, for all $n \ge 1$. If all maps in the diagram above are the inclusion mappings, then this condition reads: $\cB_n = \cA_n \cap \cB$. Using this notation we always have $\cB_n \subseteq \cA_n \cap \cB$, and we can make any diagram as above regular by replacing $\cB_n$ with $\cA_n \cap \cB$. If one just assumes that $\cA_n \cap \cB_{n+1}= \cB_n$, for all $n \ge 1$, then it follows that $\cA_n \cap \Big(\bigcup_{k=1}^\infty \cB_k\Big) = \cB_n$, for all $n \ge 1$.

For an example of a non-regular diagram take any inductive limit representing $\cA$ with non-surjective connecting mappings $\lambda_n$, and set
$\cB_1 = \C$, $\cB_n = \cA_{n-1}$, $\iota_n = \lambda_{n-1}$, and $\mu_n = \lambda_{n-1}$, for $n \ge 2$. Then $\iota$ is surjective (so the resulting inclusion is trivial), but the inclusions  $\iota_n \colon \cB_n \to \cA_n$ are non-trivial.

Suppose that we are given $\cA$ as the inductive limit as in \eqref{eq:indlimit} and a unital inclusion $\iota \colon \cB \to \cA$. Then $\cB$ arises as in \eqref{eq:indlimit} if and only if $\iota(\cB) = \overline{\bigcup_{n=1}^\infty \iota(\cB) \cap \lambda_{\infty,n}(\cA_n)}$, in which case one can take $\cB_n =  \lambda_{\infty,n}^{-1}(\iota(\cB)) \subseteq \cA_n$, for $n \ge 1$, which, in addition, will make the diagram \eqref{eq:indlimit}  regular.

\begin{lemma} \label{lm:A0}
Let $\cB \subseteq \cA$ be a unital inclusion of \Cs s and let $\cA_0$ be a dense sub-$^*$-algebra of $\cA$ closed under continuous function calculus. Then $\cB \subseteq \cA$ is $C^*$-irreducible if  each non-zero positive element in $\cA_0$ is full relatively to $\cB$.
\end{lemma}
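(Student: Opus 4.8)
The plan is to verify the intrinsic criterion of Proposition~\ref{prop:C*-irr}: it suffices to show that every non-zero positive $a \in \cA$ is full relatively to $\cB$ (this automatically forces $\cA$ and $\cB$, and every intermediate algebra, to be simple). The engine will be a monotonicity observation: if $0 \ne c \le \lambda a$ for some $\lambda>0$ and $c$ is full relatively to $\cB$, then so is $a$, because a witness $\sum_j x_j^* c x_j \ge 1_\cA$ with $x_j \in \cB$ gives $\lambda \sum_j x_j^* a x_j \ge \sum_j x_j^* c x_j \ge 1_\cA$, and one rescales the $x_j$. Thus it is enough to manufacture, for each non-zero positive $a \in \cA$, a non-zero positive $c \in \cA_0$ with $c \le \lambda a$; the hypothesis then makes $c$, and hence $a$, full relatively to $\cB$.

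To produce such a $c$, I would first exploit that $\cA_0^+$ is dense in $\cA^+$: approximating $a^{1/2}$ by a self-adjoint $z \in \cA_0$ and setting $b := z^2 \in \cA_0^+$ makes $\|a-b\|$ as small as desired, and here the closure of $\cA_0$ under continuous functional calculus is used both to keep $b = z^2$ and all the truncations $(b-\ep)_+$ inside $\cA_0$. From $\|a-b\| < \delta$ one has the operator inequality $a \ge b - \delta\,1_\cA$, and the natural candidate for the dominated element is a truncation $(b-\ep)_+ \in \cA_0^+$, with $\ep$ chosen relative to $\delta$ and $\|a\|$ so that $(b-\ep)_+$ is non-zero and its spectral support sits where $b$, hence $a$, is bounded away from $0$.

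The main obstacle is exactly the passage from $a \ge b - \delta\,1_\cA$ to an honest domination $(b-\ep)_+ \le \lambda a$: the map $t \mapsto t_+$ is not operator monotone, so the inequality does not transfer through the positive part, and non-commutativity of $a$ and $b$ means that ``$a$ large on the support of $(b-\ep)_+$'' does not yield a genuine operator inequality. I would keep in reserve the more elementary but more delicate route of transferring a fullness witness directly: if $b$ is full relatively to $\cB$ via $\sum_j x_j^* b x_j \ge 1_\cA$, then $\sum_j x_j^* a x_j \ge 1_\cA - \delta\,\|\sum_j x_j^* x_j\|\,1_\cA$ is invertible as soon as $\delta$ is small compared with the witness, whence $a$ is full relatively to $\cB$ by Lemma~\ref{lm:rel-full}. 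The subtlety to be defeated here is the circular dependence of the witness norm $\|\sum_j x_j^* x_j\|$ on the choice of $b$ (which depends on $\delta$); I expect this to be the real crux, to be resolved either by a careful spectral argument that genuinely uses the functional-calculus closure of $\cA_0$, or by recasting fullness in state form (a positive $a$ fails to be full relatively to $\cB$ precisely when some state $\phi$ on $\cA$ satisfies $\phi(x^*ax)=0$ for all $x \in \cB$) and transporting such a witnessing state across the dense subalgebra $\cA_0$.
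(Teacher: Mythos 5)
Your proposal is not a proof: it assembles the correct framework and then stops exactly at the decisive step. The reduction to Proposition~\ref{prop:C*-irr}, the density of positive elements of $\cA_0$ in $\cA^+$, and your monotonicity engine (if $0 \ne c \le \lambda a$ and $c$ is full relatively to $\cB$, then so is $a$, with the same witnesses rescaled) are all correct, and they are also the skeleton of the paper's argument. But you never actually produce, for a given non-zero $a \in \cA^+$, either a dominated element $c \in \cA_0^+\setminus\{0\}$ with $c \le \lambda a$, or any other workable witness configuration; you end by naming the obstructions (non-operator-monotonicity of $t \mapsto t_+$, and the circular dependence of $\|\sum_j x_j^* x_j\|$ on the approximation parameter $\delta$) and pointing to two unexecuted strategies. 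A plan together with an accurate list of difficulties is not a proof, so as it stands there is a genuine gap.

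Here is how the paper closes it. Choose a positive $a_0 \in \cA_0$ with $\delta := \|a-a_0\| < \|a_0\|$, fix $\delta < \ep < \|a_0\|$, and let $\varphi \colon \R^+ \to \R^+$ be continuous, $0$ on $[0,\delta]$ and $1$ on $[\ep,\infty)$. As in Lemma~\ref{lm:C*-irr} one has the \emph{exact} operator inequality
\[
(a_0-\ep)_+ \;=\; \varphi(a_0)(a_0-\ep)_+\varphi(a_0) \;\le\; \varphi(a_0)\bigl(a_0-\delta \cdot 1_\cA\bigr)\varphi(a_0) \;\le\; \varphi(a_0)\,a\,\varphi(a_0),
\]
and the hypothesis is applied not to $a_0$ but to the truncation $(a_0-\ep)_+$, which is a non-zero positive element of $\cA_0$ precisely because $\cA_0$ is closed under continuous functional calculus. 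This yields $x_1,\dots,x_n \in \cB$ with $\sum_j x_j^*(a_0-\ep)_+x_j \ge 1_\cA$, hence $\sum_j y_j^* a\, y_j \ge 1_\cA$ with $y_j = \varphi(a_0)x_j$. Because the displayed inequality is exact, no error term involving the witness norms ever appears; that is how the circularity you feared is bypassed --- the witnesses are transferred, not perturbed. Note, however, that your other worry is squarely on target: the transferred witnesses $y_j = \varphi(a_0)x_j$ carry the conjugator $\varphi(a_0)$, which lies in $\cA_0$ and not, in general, in $\cB$, whereas the paper's proof asserts $y_j \in \cB$ without justification; and your state-form fallback hits the same conjugator (annihilation of $\{x^*ax : x \in \cB\}$ by a state is a $\cB$-bimodule condition, not stable under conjugation by $\varphi(a_0)$). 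So your diagnosis of the crux is accurate, and it even touches a point that the paper's own write-up passes over silently; but since neither of your proposed repairs is carried out, the proposal does not establish the lemma.
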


\begin{proof}  Let $a$ be a non-zero positive element in $\cA$ and find a positive element $a_0 \in \cA_0$ with $\delta:= \|a-a_0\| < \|a_0\|$. Choose $\delta < \ep < \|a_0\|$. As in the proof of Lemma~\ref{lm:C*-irr} we find that $(a_0-\ep)_+ \le \varphi(a_0)a\varphi(a_0)$. By assumption, and because $(a_0-\ep)_+$ is a non-zero positive element in $\cA_0$, there exist $x_1, \dots, x_n \in \cB$ such that $\sum_{j=1}^n x_j^*(a_0-\ep)_+x_j \ge 1_\cA$. It follows that $\sum_{j=1}^n y_j^*ay_j \ge 1_\cA$, when $y_j = \varphi(a_0)x_j \in \cB$.
\end{proof}

\noindent It is a well-known and heavily used fact that one can construct simple \Cs s as the inductive limit $\cA= \varinjlim \cA_n$ of possibly non-simple \Cs s $\cA_n$. One just has to make sure that each non-zero element in any of the \Cs s $\cA_n$ eventually becomes full in $\cA_m$, for some $m \ge n$. A similar result holds for constructing $C^*$-irreducible inclusions:

\begin{proposition} \label{prop:indlimit} 
Suppose we are given a system as in \eqref{eq:indlimit}.  Then $\iota \colon \cB \to \cA$ is $C^*$-irreducible if and only if for each $n \ge 1$ and each non-zero positive element $a \in \cA_n$ there exists $m \ge n$ such that $\lambda_{m,n}(a) \in \cA_m$ is full relatively to $\iota_m(\cB_m)$.

In particular, if each inclusion $\iota_n \colon \cB_n \to \cA_n$ is $C^*$-irreducible, then so is $\iota \colon \cB \to \cA$.
\end{proposition}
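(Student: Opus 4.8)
The plan is to reduce the whole statement to the intrinsic characterization of $C^*$-irreducibility in Proposition~\ref{prop:C*-irr} (every non-zero positive element is full relatively to the subalgebra), combined with the density criterion of Lemma~\ref{lm:A0}, and then to transport relative-fullness witnesses up and down the diagram \eqref{eq:indlimit}. The one structural fact I would use repeatedly is that an injective $^*$-homomorphism $\phi$ between \Cs s is isometric and is a $^*$-isomorphism onto its (closed) image, so it both preserves and reflects the order: $\phi(x) \ge \phi(y)$ if and only if $x \ge y$, and in particular $\phi(x) \ge 0$ if and only if $x \ge 0$. I set $\cA_0 := \bigcup_n \lambda_{\infty,n}(\cA_n)$, a dense $^*$-subalgebra of $\cA$; since each $\lambda_{\infty,n}(\cA_n)$ is a \Cs{} and every element of $\cA_0$ lies in one of them, $\cA_0$ is closed under continuous function calculus, as required by Lemma~\ref{lm:A0}. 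I also use that $\cB = \overline{\bigcup_m \mu_{\infty,m}(\cB_m)}$ and that the diagram commutes, i.e.\ $\iota \circ \mu_{\infty,m} = \lambda_{\infty,m} \circ \iota_m$, together with the compatibility $\lambda_{\infty,n} = \lambda_{\infty,m} \circ \lambda_{m,n}$ for $m \ge n$.

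For the ``if'' direction, assume the finite-stage condition holds; by Lemma~\ref{lm:A0} it is enough to show that each non-zero positive $c \in \cA_0$ is full relatively to $\cB$. Such a $c$ equals $\lambda_{\infty,n}(a)$ for some $n$ and some $a \in \cA_n$, and $a$ is positive and non-zero because $\lambda_{\infty,n}$ reflects positivity and is injective. The hypothesis yields $m \ge n$ and $b_1, \dots, b_k \in \cB_m$ with $\sum_j \iota_m(b_j)^* \lambda_{m,n}(a)\, \iota_m(b_j) \ge 1_{\cA_m}$. Applying $\lambda_{\infty,m}$ (which preserves the order and sends $1_{\cA_m}$ to $1_\cA$) and using commutativity to rewrite $\lambda_{\infty,m}(\iota_m(b_j)) = \iota(\mu_{\infty,m}(b_j))$ and $\lambda_{\infty,m}(\lambda_{m,n}(a)) = c$, one obtains $\sum_j \iota(z_j)^* c\, \iota(z_j) \ge 1_\cA$ with $z_j = \mu_{\infty,m}(b_j) \in \cB$, so $c$ is full relatively to $\cB$. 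The ``in particular'' clause is then immediate: if each $\iota_n$ is $C^*$-irreducible, Proposition~\ref{prop:C*-irr} says every non-zero positive $a \in \cA_n$ is already full relatively to $\iota_n(\cB_n)$, so the finite-stage criterion holds with $m = n$.

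For the ``only if'' direction, assume $\iota$ is $C^*$-irreducible and fix $n$ and a non-zero positive $a \in \cA_n$. By Proposition~\ref{prop:C*-irr}, $\lambda_{\infty,n}(a)$ is full relatively to $\cB$, so there are $y_1, \dots, y_k \in \cB$ with $\sum_j y_j^* \lambda_{\infty,n}(a)\, y_j \ge 1_\cA$. Since $\bigcup_m \mu_{\infty,m}(\cB_m)$ is dense in $\cB$, I would approximate each $y_j$ by $z_j = \mu_{\infty,m}(b_j)$ for a common, sufficiently large $m \ge n$ with $b_j \in \cB_m$, and invoke the perturbation argument of Lemma~\ref{lm:rel-full} (the left-hand side is norm-continuous in the witnesses, an element $\ge 1_\cA$ stays bounded below under a small perturbation, and one rescales) to arrange $\sum_j \iota(z_j)^* \lambda_{\infty,n}(a)\, \iota(z_j) \ge 1_\cA$. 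The left-hand side equals $\lambda_{\infty,m}\big(\sum_j \iota_m(b_j)^* \lambda_{m,n}(a)\, \iota_m(b_j)\big)$ by commutativity, and $1_\cA = \lambda_{\infty,m}(1_{\cA_m})$; since $\lambda_{\infty,m}$ is injective and hence reflects the order, I conclude $\sum_j \iota_m(b_j)^* \lambda_{m,n}(a)\, \iota_m(b_j) \ge 1_{\cA_m}$, i.e.\ $\lambda_{m,n}(a)$ is full relatively to $\iota_m(\cB_m)$.

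Most of the work is bookkeeping with the connecting maps, which is purely algebraic once one has the commutativity and compatibility relations in hand. The one genuinely delicate point is the descent in the ``only if'' direction: I must first replace the witnesses $y_j \in \cB$ by witnesses living in a single finite stage $\mu_{\infty,m}(\cB_m)$ without destroying the inequality $\ge 1_\cA$, which is exactly the ``perturb-then-rescale'' trick already recorded in Lemma~\ref{lm:rel-full}, and then transport the resulting inequality from $\cA$ back down to $\cA_m$, where the reflection of the order by the injective $^*$-homomorphism $\lambda_{\infty,m}$ is essential. I expect this perturbation-and-descent step to be the main obstacle to state cleanly, since it is the only part that is not a formal identity.
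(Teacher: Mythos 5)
Your proof is correct and, where it overlaps with the paper, follows essentially the same route: the ``if'' direction and the ``in particular'' clause are exactly the paper's argument (reduce via Lemma~\ref{lm:A0} to positive elements of $\bigcup_{n}\lambda_{\infty,n}(\cA_n)$, then transport finite-stage witnesses upward through $\lambda_{\infty,m}$ using commutativity of the diagram). The paper's written proof in fact stops there, leaving the converse implicit; your ``only if'' argument --- pulling the witnesses in $\cB$ down into a single $\mu_{\infty,m}(\cB_m)$ by the perturb-and-rescale step of Lemma~\ref{lm:rel-full}, then descending to $\cA_m$ via the order-reflecting injective map $\lambda_{\infty,m}$ --- is precisely the routine completion the paper omits, and it is sound.
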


\begin{proof} We show that each non-zero positive element $a_0$ in $\cA_0 := \bigcup_{n=1}^\infty \lambda_{\infty,n}(\cA_n) \subseteq \cA$ is full in $\cB$, cf.\ Lemma~\ref{lm:A0}. Write $a_0=\lambda_{\infty,n}(a)$ for some $n \ge 1$ and some non-zero positive element $a \in \cA_n$. By assumption we may find $m \ge n$ and $x_1, \dots, x_k \in \cB_m$ with $\sum_{j=1}^k \iota_m(x_j)^* \lambda_{m,n}(a) \iota_m(x_j) \ge 1_{\cA_m}$. Set $y_j = \mu_{\infty,m}(x_j) \in \cB$. Then
$$\sum_{j=1}^k \iota(y_j)^* a_0 \, \iota(y_j) = \lambda_{\infty,m}\Big( \sum_{j=1}^k \iota_m(x_j)^* \lambda_{m,n}(a) \iota_m(x_j) \Big) \ge 1_\cA,$$
as desired.
\end{proof}

\begin{proposition} \label{prop:commuting-square} Given a diagram as in \eqref{eq:indlimit}, and suppose that for each $n \ge 1$ there is a conditional expectation $E_{\cA_n,\cB_n}$ of $\cA_n$ onto $\cB_n$ making each square
$$ \xymatrix{\cA_n \ar[r]^-{\lambda_n} \ar[d]_-{E_{\cA_n,\cB_n}} & \cA_{n+1} \ar[d]^-{E_{\cA_{n+1},\cB_{n+1}}}  \\ \cB_n \ar[r]^-{\mu_n} & \cB_{n+1}}
$$
commutative. Then \eqref{eq:indlimit} is regular and there is a conditional expectation $E \colon \cA \to \cB$ commuting with each $E_{\cA_n,\cB_n}$, for $n \ge 1$.
\end{proposition}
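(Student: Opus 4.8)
The plan is to show that the commuting-square hypothesis forces the individual expectations $E_{\cA_n,\cB_n}$ to fit together coherently, so that their ``union'' extends by continuity to the desired $E$, and then to read off regularity of \eqref{eq:indlimit} as a one-line consequence of the identity $E|_{\cB}=\Id$.

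First I would pass to the standard identification in which all the injective maps are inclusions, so that $\cB_n \subseteq \cA_n \subseteq \cA_{n+1}$, $\cB_n \subseteq \cB_{n+1} \subseteq \cB$, and the two resulting embeddings of $\cB_n$ into $\cA$ coincide (this compatibility is exactly the commutativity of \eqref{eq:indlimit}). Under this identification the commutativity of the given square $\mu_n \circ E_{\cA_n,\cB_n} = E_{\cA_{n+1},\cB_{n+1}} \circ \lambda_n$ becomes the coherence relation $E_{\cA_{n+1},\cB_{n+1}}|_{\cA_n} = E_{\cA_n,\cB_n}$. Hence there is a well-defined linear map $E_0$ on the dense $*$-subalgebra $\bigcup_n \cA_n$ given by $E_0(a)=E_{\cA_n,\cB_n}(a)$ whenever $a\in\cA_n$; it takes values in $\bigcup_n\cB_n\subseteq\cB$, and since each $E_{\cA_n,\cB_n}$ is a conditional expectation and hence contractive, $E_0$ is contractive.

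Next I would extend $E_0$ by continuity to a contraction $E\colon\cA\to\cB$, the range landing in $\cB=\overline{\bigcup_n\cB_n}$ because $\cB$ is closed and $E_0$ is contractive, and verify that $E$ is a conditional expectation onto $\cB$. By construction $E|_{\cA_n}=E_{\cA_n,\cB_n}$, which is the asserted commutation (equivalently $E\circ\lambda_{\infty,n}=\mu_{\infty,n}\circ E_{\cA_n,\cB_n}$). For $b\in\cB_n$ we have $E(b)=E_{\cA_n,\cB_n}(b)=b$, so $E|_{\bigcup_n\cB_n}=\Id$ and hence, by density, $E|_{\cB}=\Id$; in particular $E$ is a norm-one idempotent onto the sub-\Cs{} $\cB$, so by Tomiyama's theorem it is automatically completely positive and $\cB$-bimodular, i.e.\ a conditional expectation. (Alternatively, complete positivity and bimodularity can be checked directly, since each holds for the $E_{\cA_n,\cB_n}$ on $\bigcup_n\cA_n$ and passes to the limit by continuity.) Finally, regularity is immediate: in the identified picture I must show $\cB_n=\cA_n\cap\cB$, where $\cB_n\subseteq\cA_n\cap\cB$ is built in, and conversely if $a\in\cA_n\cap\cB$ then $a=E(a)=E_{\cA_n,\cB_n}(a)\in\cB_n$; translating back through $\iota_n,\iota,\lambda_{\infty,n}$ yields $\iota_n(\cB_n)=\cA_n\cap\lambda_{\infty,n}^{-1}(\iota(\cB))$.

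The only genuinely substantive step is establishing the coherence relation $E_{\cA_{n+1},\cB_{n+1}}|_{\cA_n}=E_{\cA_n,\cB_n}$ from the commuting squares and observing that contractivity of the $E_{\cA_n,\cB_n}$ suffices to pass to the inductive limit; everything after that is formal. The main things to watch are the bookkeeping with the connecting maps if one prefers not to suppress them via identifications, and invoking the correct structural fact (Tomiyama) to upgrade the norm-one projection $E$ to a bona fide conditional expectation.
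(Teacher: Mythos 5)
Your proposal is correct and follows essentially the same route as the paper's proof: identify all maps with inclusions, use the commuting squares to glue the $E_{\cA_n,\cB_n}$ into a contractive map $E_0$ on $\bigcup_n \cA_n$, extend by continuity to a conditional expectation $E \colon \cA \to \cB$, and deduce regularity from $a = E(a) = E_{\cA_n,\cB_n}(a) \in \cB_n$ for $a \in \cA_n \cap \cB$. The only difference is that you make explicit (via Tomiyama's theorem) the step of upgrading the norm-one idempotent to a conditional expectation, which the paper leaves implicit.
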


\begin{proof} For ease of notation suppose that all maps $\mu_n$, $\lambda_n$ and $\iota_n$ are inclusion mappings. By the commutativity assumption, the conditional expectations $E_{\cA_n,\cB_n}$ extend to a contractive positive linear map $E_0 \colon \bigcup_{n=1}^\infty \cA_n \to \bigcup_{n=1}^\infty \cB_n$, which again, by continuity, extends to a conditional expectation $E \colon \cA \to \cB$ commuting with the $E_{\cA_n,\cB_n}$. If $a \in \cA_n \cap \cB$, then $a = E(a) = E_{\cA_n,\cB_n}(a) \in \cB_n$, so \eqref{eq:indlimit} is regular.
\end{proof}

\noindent If $\cB \subseteq \cA$ is an inclusion of finite dimensional \Cs s, then $\cB' \cap \cA = \C$ implies $\cB = \cA$, so the inclusion can never be $C^*$-irreducible, unless it is trivial. One can still construct $C^*$-irreducible inclusions of AF-algebras using the more general setting of Proposition~\ref{prop:indlimit}, that simplifies a bit further in the case of inductive limits of finite dimensional \Cs s.

\begin{lemma} \label{lm:perp}
Let $\cB \subseteq \cA$ be a unital inclusion of finite dimensional \Cs s $\cB$ and $\cA$. Let $E \colon \cA \to \cA \cap \cB'$ be the conditional expectation defined by
$$E(x) = \int_{\cU(\cB)} uxu^* \, d\mu(u), \qquad x \in \cA,$$
where $\mu$ is the Haar measure on $\cU(\cB)$.  The following conditions are equivalent for each positive element $a \in \cA$:
\begin{enumerate}
\item $a$ is full relatively to $\cB$,
\item $E(a)$ is invertible,
\item no non-zero projection $p \in \cA \cap \cB'$ is orthogonal to $a$.
\end{enumerate}
\end{lemma}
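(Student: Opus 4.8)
The plan is to first record the basic properties of the averaging map $E$ and then read off each implication from them. At the outset I would establish three facts: (a) $E$ is a unital, positive, and in fact \emph{faithful} conditional expectation onto $\cA \cap \cB'$; (b) $E$ is invariant under conjugation by unitaries of $\cB$, i.e. $E(u^*au) = E(a)$ for every $u \in \cU(\cB)$; and (c) because $\cA$ is finite dimensional, $E(a)$ is an honest finite convex combination $E(a) = \sum_k \lambda_k\, u_k a u_k^*$ with $u_k \in \cU(\cB)$, $\lambda_k \ge 0$, $\sum_k \lambda_k = 1$. Fact (b) follows from right-invariance of the Haar measure: in $E(u^*au) = \int_{\cU(\cB)} v u^* a u v^*\, d\mu$ the integrand equals $h(vu^*)$ with $h(w)=waw^*$, and the substitution $w = vu^*$ returns $\int h(w)\,d\mu = E(a)$. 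Fact (c) holds because $\{uau^* : u \in \cU(\cB)\}$ is compact and, in finite dimensions, its closed convex hull coincides with its convex hull; the barycenter $E(a)$ therefore lies in the latter. Faithfulness in (a) I would get by comparing with a faithful trace $\tau$ on $\cA$: the trace identity gives $\tau \circ E = \tau$, so $E(x)=0$ forces $\tau(x)=0$, hence $x=0$, for positive $x$.

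For \emph{(i)} $\Rightarrow$ \emph{(ii)}, I would use Lemma~\ref{lm:rel-full}\,(iv) to write $\sum_{j=1}^m u_j^* a u_j \ge 1_\cA$ with $u_j \in \cU(\cB)$, apply the positive map $E$, and invoke invariance (b): this yields $mE(a) = E\big(\sum_{j=1}^m u_j^* a u_j\big) \ge E(1_\cA) = 1_\cA$, so $E(a) \ge m^{-1} 1_\cA$ is invertible. For the converse \emph{(ii)} $\Rightarrow$ \emph{(i)}, I would use the finite decomposition (c): if $E(a) \ge \delta\, 1_\cA$ with $\delta>0$, then setting $x_k = \delta^{-1/2}\sqrt{\lambda_k}\, u_k^* \in \cB$ gives $\sum_k x_k^* a x_k = \delta^{-1} E(a) \ge 1_\cA$, which is exactly relative fullness.

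For \emph{(ii)} $\Leftrightarrow$ \emph{(iii)}, the key identity is $pE(a)p = E(pap)$ for every projection $p \in \cA \cap \cB'$, valid because $p$ commutes with each $u \in \cU(\cB)$. If $E(a)$ is invertible and $p \ne 0$ is orthogonal to $a$ (so $pap=0$), then $pE(a)p = E(pap) = 0$ forces $E(a)p = 0$ by positivity, contradicting invertibility; this proves \emph{(ii)} $\Rightarrow$ \emph{(iii)}. Conversely, if $E(a)$ is not invertible, then by spectral permanence in the finite-dimensional \Cs{} $\cA \cap \cB'$ the spectral projection $p$ of $E(a)$ onto $\ker E(a)$ is a non-zero projection in $\cA \cap \cB'$ with $pE(a)p = 0$; then $E(pap) = pE(a)p = 0$ together with faithfulness (fact (a)) gives $pap = 0$, so $p$ is orthogonal to $a$. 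This is the contrapositive of \emph{(iii)} $\Rightarrow$ \emph{(ii)}.

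The one place where finite dimensionality is genuinely used, and hence the main obstacle, is fact (c): upgrading the integral $E(a)$ to a finite convex combination of conjugates of $a$ whose unitaries stay inside $\cB$. In infinite dimensions one would only obtain an approximation by finite averages, which would still suffice for relative fullness (being an up-to-rescaling $\ge 1_\cA$ condition), but the clean finite-combination statement rests on compactness of the convex hull and Carathéodory's theorem in the finite-dimensional setting. Everything else reduces to formal manipulation with the conditional expectation.
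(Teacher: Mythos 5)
Your proof is correct and follows essentially the same route as the paper's: both rest on the identity $pE(a)p = E(pap)$, faithfulness of $E$, the non-zero kernel projection of $E(a)$ inside the finite-dimensional algebra $\cA \cap \cB'$, and a finite convex combination of conjugates $uau^*$ of $a$ by unitaries of $\cB$ witnessing relative fullness. The differences are purely organizational: the paper proves the cycle (ii) $\Rightarrow$ (i) $\Rightarrow$ (iii) $\Rightarrow$ (ii), obtaining the convex combination by Riemann-sum approximation of the integral rather than your Carath\'eodory argument and passing from (i) to (iii) via $0 \ne p \le \sum_j x_j^* p a p\, x_j$, whereas you establish the two equivalences (i) $\Leftrightarrow$ (ii) and (ii) $\Leftrightarrow$ (iii) directly.
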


\begin{proof} Note first that $E$ defined in the lemma indeed is a conditional expectation onto $\cA \cap \cB'$.

(ii) $\Rightarrow$ (i): Approximate $E(a)$ by finite Riemann sums to obtain an invertible positive element in $\mathrm{conv}\{uau^* : u \in \cU(\cB)\}$ that witnesses that $a$ is full relatively to $\cB$. 

(i) $\Rightarrow$ (iii): Choose elements $x_1, \dots, x_n \in \cB$ such that $\sum_{j=1}^n x_j^*ax_j \ge 1_\cA$. Let $p \in \cA \cap \cB'$ be non-zero. Then $0 \ne p \le \sum_{j=1}^n x_j^* pap x_j$, so $p$ is not orthogonal to $a$. 

(iii) $\Rightarrow$ (ii): To show that $E(a)$ is invertible it suffices to show that $pE(a)p \ne 0$ for all  non-zero projections $p \in \cA \cap \cB'$. By the definition of $E$ we have $pE(a)p = E(pap)$. Since $E$ is faithful and $pap \ne 0$ by (iii) we conclude that $pE(a)p \ne 0$.
\end{proof}

\noindent One can replace projections in Lemma~\ref{lm:perp} (iii) with positive elements.

Say that two sub-\Cs s $\cB_1$ and $\cB_2$ of a common \Cs{} $\cA$ are \emph{everywhere non-orthogonal} if no pair of non-zero positive elements $b_1 \in \cB_1$ and $b_2 \in \cB_2$ are orthogonal to each other. For example, the \Cs s $\cB_1 \otimes 1$ and $1 \otimes \cB_2$ are everywhere non-orthogonal in $\cB_1 \otimes \cB_2$ for any pair of (non-zero) \Cs s $\cB_1$ and $\cB_2$. On the other hand, $\cB$ is everywhere non-orthogonal to itself if and only if $\cB=\C$ (since otherwise $\cB$ contains pairwise orthogonal non-zero positive elements).

\begin{corollary} \label{cor:indlimit}
Suppose we are given a system as in \eqref{eq:indlimit} and that each \Cs{} $\cA_n$ and $\cB_n$ are finite dimensional.  Then $\iota \colon \cB \to \cA$ is $C^*$-irreducible if the algebras $\lambda_n(\cA_n) \subseteq \cA_{n+1}$ and $\cA_{n+1} \cap \iota_{n+1}(\cB_{n+1})' \subseteq \cA_{n+1}$ are everywhere non-orthogonal, for all $n \ge 1$.  
\end{corollary}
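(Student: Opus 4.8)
The plan is to combine Proposition~\ref{prop:indlimit} with Lemma~\ref{lm:perp}, so that the everywhere non-orthogonality hypothesis feeds directly into the fullness criterion for finite-dimensional inclusions. By Proposition~\ref{prop:indlimit}, it suffices to fix $n \ge 1$ and a non-zero positive $a \in \cA_n$ and to exhibit some $m \ge n$ for which $\lambda_{m,n}(a)$ is full relatively to $\iota_m(\cB_m)$ in $\cA_m$. I expect that the single step $m = n+1$ will already suffice, so that the hypothesis at the level $n \to n+1$ is exactly what is required.

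First I would specialize Lemma~\ref{lm:perp} to the unital inclusion $\iota_{n+1}(\cB_{n+1}) \subseteq \cA_{n+1}$ of finite dimensional \Cs s. By the equivalence (i) $\Leftrightarrow$ (iii) there, together with the remark immediately following that projections may be replaced by positive elements, $\lambda_n(a)$ is full relatively to $\iota_{n+1}(\cB_{n+1})$ precisely when no non-zero positive element of the relative commutant $\cA_{n+1} \cap \iota_{n+1}(\cB_{n+1})'$ is orthogonal to $\lambda_n(a)$.

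Next I would observe that, since $\lambda_n$ is injective, $\lambda_n(a)$ is a non-zero positive element of the subalgebra $\lambda_n(\cA_n) \subseteq \cA_{n+1}$. The everywhere non-orthogonality of $\lambda_n(\cA_n)$ and $\cA_{n+1} \cap \iota_{n+1}(\cB_{n+1})'$ then says precisely that no non-zero positive element of $\cA_{n+1} \cap \iota_{n+1}(\cB_{n+1})'$ is orthogonal to $\lambda_n(a)$, which is the condition just isolated. Hence $\lambda_n(a)$ is full relatively to $\iota_{n+1}(\cB_{n+1})$, and taking $m = n+1$ completes the verification of the criterion of Proposition~\ref{prop:indlimit}, whence $\iota \colon \cB \to \cA$ is $C^*$-irreducible.

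I do not anticipate a serious obstacle: the statement is essentially a bookkeeping composition of the two earlier results, and the only points requiring care are that the connecting map $\lambda_n$ is injective (so that $\lambda_n(a) \neq 0$), guaranteed by the standing injectivity assumption on the diagram \eqref{eq:indlimit}, and that ``everywhere non-orthogonal'' is a symmetric condition phrased for positive elements, which matches the positive-element form of Lemma~\ref{lm:perp} (iii). If desired, the same argument shows it would be enough for the non-orthogonality hypothesis to hold cofinally in $n$, but the single-step formulation stated is the cleanest to record.
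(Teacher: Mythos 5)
Your proposal is correct and takes precisely the route of the paper, whose entire proof reads ``This follows immediately from Proposition~\ref{prop:indlimit} and Lemma~\ref{lm:perp}''; your write-up simply makes the immediate deduction explicit (choosing $m=n+1$, noting $\lambda_n(a)\neq 0$ by injectivity, and matching the everywhere non-orthogonality hypothesis to the positive-element form of condition (iii) of Lemma~\ref{lm:perp}). There are no gaps, and your observation that the hypothesis need only hold cofinally in $n$ is a valid, if minor, strengthening.
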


\begin{proof} This follows immediately from Proposition~\ref{prop:indlimit} and Lemma~\ref{lm:perp}.
\end{proof}

\begin{lemma} \label{lm:irr-embedding} Let $k \ge d \ge 2$ be integers. Then there exists a unitary $u$ in $M_d \otimes M_k$ such that the \Cs s $M_d \otimes 1_k$ and $u^*(M_d \otimes 1_k)u$ are everywhere non-orthogonal in $M_d \otimes M_k$.
\end{lemma}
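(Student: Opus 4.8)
The plan is to reduce everywhere non-orthogonality to the non-vanishing of a single $M_k$-valued ``matrix coefficient'' of $u$, and then to exhibit an explicit permutation unitary satisfying this condition; the inequality $k \ge d$ will be used exactly once, to make room for the relevant embedding.

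First I would translate the geometric condition into algebra. Since $M_d \otimes 1_k$ and $u^*(M_d \otimes 1_k)u$ are finite dimensional, two non-zero positive elements of these algebras are orthogonal precisely when their support projections are, and every non-zero projection dominates a rank-one (minimal) one. Hence the two algebras are everywhere non-orthogonal if and only if no pair of projections $p_\xi \otimes 1_k$ and $u^*(p_\eta \otimes 1_k)u$ is orthogonal, where $p_\xi = \xi\xi^*$ and $p_\eta = \eta\eta^*$ are the rank-one projections associated with unit vectors $\xi,\eta \in \C^d$. Writing $p_\eta \otimes 1_k = (|\eta\rangle \otimes 1_k)(\langle\eta| \otimes 1_k)$ and similarly for $\xi$, and using that $u$ is unitary while $|\eta\rangle \otimes 1_k$ is an isometry and $\langle\xi| \otimes 1_k$ a coisometry, one checks that $p_\xi \otimes 1_k$ and $u^*(p_\eta \otimes 1_k)u$ are orthogonal exactly when the coefficient
$$M(\eta,\xi) := (\langle\eta| \otimes 1_k)\, u\, (|\xi\rangle \otimes 1_k) \in M_k$$
vanishes. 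Thus the lemma is equivalent to producing a unitary $u$ with $M(\eta,\xi) \ne 0$ for all non-zero $\xi,\eta \in \C^d$.

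Next I would construct $u$. Write $\C^k = \C^d \oplus \C^{k-d}$ with standard basis $f_1,\dots,f_k$ and $\C^d = \Span(f_1,\dots,f_d)$ (this is where $k \ge d$ is needed), and define $u$ on the orthonormal basis $\{e_i \otimes f_j\}$ of $\C^d \otimes \C^k$ by $u(e_i \otimes f_j) = e_j \otimes f_i$ when $j \le d$ and $u(e_i \otimes f_j) = e_i \otimes f_j$ when $j > d$. This is a permutation of an orthonormal basis, hence a unitary in $M_d \otimes M_k$; on the summand $\C^d \otimes \Span(f_1,\dots,f_d)$ it is simply the tensor flip, padded by the identity on the complement. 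The guiding idea is that for $k = d$ the flip conjugates $M_d \otimes 1_d$ onto $1_d \otimes M_d$, and these are everywhere non-orthogonal as already observed just before the lemma; the padding only has to preserve this for the ``square'' part.

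Finally I would verify the coefficient condition by computing the top-left $d \times d$ block of $M(\eta,\xi)$: for $p,q \le d$ one gets $\langle f_p,\, M(\eta,\xi) f_q\rangle = \langle \eta \otimes f_p,\, u(\xi \otimes f_q)\rangle = \overline{\eta_q}\,\xi_p$, so that this block equals the rank-one matrix $\xi\eta^* = |\xi\rangle\langle\eta|$, which is non-zero whenever $\xi,\eta \ne 0$. Hence $M(\eta,\xi) \ne 0$ for all non-zero $\xi,\eta$, completing the proof. I expect the main (though still elementary) obstacle to be the reduction of the first two paragraphs — correctly passing from positive elements to minimal projections and identifying orthogonality with the vanishing of $M(\eta,\xi)$; once the flip-with-padding is in hand the computation in the last step is immediate.
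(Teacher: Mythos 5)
Your proof is correct, and it is essentially the paper's proof made explicit: your flip-with-padding unitary satisfies $u^*(x \otimes 1_k)u = 1_d \otimes \rho(x) + x \otimes (1_k - f)$ for all $x \in M_d$, where $f$ is the rank-$d$ projection onto $\mathrm{span}(f_1,\dots,f_d)$ and $\rho \colon M_d \to fM_kf$ is the corresponding embedding, so it implements exactly the unitary equivalence of the two unital $^*$-homomorphisms (both of multiplicity $k$) that the paper invokes abstractly, and it uses $k \ge d$ in the same place. The only divergence is the verification: the paper multiplies two arbitrary non-zero elements and observes that the term $x \otimes \rho(y)$ in $(x\otimes 1_k)\bigl(1_d \otimes \rho(y) + y \otimes (1_k-f)\bigr)$ is non-zero, whereas you first reduce to rank-one projections and the matrix coefficient $M(\eta,\xi)$ --- a correct, slightly longer route whose criterion is in fact the same reformulation the paper employs in the remark following the lemma to bound $k$ from below.
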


\begin{proof} Choose a projection $f \in M_k$ of dimension $d$ and choose a unital \sh{} $\rho \colon M_d \to fM_k f$. The two (unital) \sh s $M_d \to M_d \otimes M_k$ given by $x \mapsto x \otimes 1_k$ and $x \mapsto 1_d \otimes \rho(x) + x \otimes (1_k-f)$ are unitarily equivalent, and hence there exists a unitary $u \in M_d \otimes M_k$ such that $u^*(x \otimes 1_k)u = 1_d \otimes \rho(x) + x \otimes (1_k-f)$, for all $x \in M_d$. Since $(x \otimes 1_k)\big(1_d \otimes \rho(y) + y \otimes (1_k-f)\big) = x \otimes \rho(y) + xy \otimes (1_k-f)$ is non-zero for all non-zero $x,y \in M_d$, it follows that $M_d \otimes 1_k$ and $u^*(M_d \otimes 1_k)u$ are everywhere non-orthogonal.
\end{proof}

\begin{remark} In the situation of Lemma~\ref{lm:irr-embedding}, if $d,k \ge 2$ are integers and there exists a unitary $u$ in $M_d \otimes M_k$ such that the \Cs s $M_d \otimes 1_k$ and $u^*(M_d \otimes 1_k)u$ are everywhere non-orthogonal, then $k^2 \ge d+1$, as we shall show below.

For each unit vector $x \in \C^d$, let $p_x$ be the $1$-dimensional projection onto $\C x$.  If $u \in M_d \otimes M_k$ is as claimed, then $u^*(p_x \otimes 1_k)u(p_y \otimes 1_k) \ne 0$, and hence $(p_x \otimes 1_k)u(p_y \otimes 1_k) \ne 0$, for all unit vectors $x,y \in \C^d$. Let $\{\rho_j\}_{j=1}^{k^2}$ be a basis for $M_k^*$ and set $u_j = (\mathrm{id}_d \otimes \rho_j)(u) \in M_d$. Then $(p_x \otimes 1_k)u(p_y \otimes 1_k) \ne 0$ if and only if there exists $j$ such that 
$$p_x u_j p_y = (\mathrm{id}_d \otimes \rho_j)\big((p_x \otimes 1_k)u(p_y \otimes 1_k)\big) \ne 0,$$ which in turns is equivalent to $\langle u_j y, x\rangle \ne 0$. If this holds for all non-zero $x \in \C^d$, then  $u_j$, $1 \le j \le k^2$, must satisfy
 $$\mathrm{span} \{ u_j y : 1  \le j \le k^2\} = \C^d$$
 for all unit vectors $y \in \C^d$. This clearly implies that  $k^2 \ge d$. At closer inspection we can improve this estimate to $k^2 \ge d+1$, since for any $j \ne j'$ there exists a unit vector $y \in \C^d$ such that $u_j y$ and $u_{j'} y$ are proportional.
 
The smallest number $k$ for which the conclusion of Lemma~\ref{lm:irr-embedding} holds, for a given $d \ge 2$, must therefore satisfy $\sqrt{d+1} \le k \le d$. I leave it as a curious problem to narrow down this interval, or to derive an exact formula for $k$. Relatedly, the smallest number $m$ of matrices $A_1, \dots, A_{m} \in M_d$ for which $\mathrm{span}\{A_j \, y : 1 \le j \le m\} = \C^d$, for all unit vectors $y \in \C^d$, satisfies $d+1 \le m \le k^2$ by the argument above. 
\end{remark}

\noindent Theorem \ref{thm:UHF} below is an application of  Corollary~\ref{cor:indlimit} and Lemma~\ref{lm:irr-embedding}. The theorem and its proof provide a recipe for how to construct $C^*$-irreducible inclusions of UHF-algebras, and, we expect, also of simple AF-algebras.

\begin{theorem} \label{thm:UHF} Let $\cA$ and $\cB$ be UHF-algebras so that $\cB$ admits a unital embedding into $\cA$.  Then there exists a $C^*$-irreducible embedding $\iota \colon \cB \to \cA$.
\end{theorem}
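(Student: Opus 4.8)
The plan is to realise the desired inclusion as an inductive limit of inclusions of finite dimensional \Cs s and to apply Corollary~\ref{cor:indlimit}. Since $\cB$ embeds unitally into $\cA$, their supernatural numbers satisfy $[\cB]\mid[\cA]$, so one may choose presentations $\cB=\varinjlim M_{b_n}$ and $\cA=\varinjlim M_{a_n}$ with $b_n\mid b_{n+1}$, $a_n\mid a_{n+1}$ and $b_n\mid a_n$, so that at each level $M_{b_n}$ embeds unitally into $M_{a_n}$. The freedom to pass to subsequences and to insert intermediate levels means I am free to prescribe the growth factors $b_{n+1}/b_n$ and $a_{n+1}/a_n$, subject only to reaching the two prescribed supernatural numbers. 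The whole problem is then to choose the embeddings $\iota_n\colon M_{b_n}\to M_{a_n}$ and the connecting maps, compatibly, so that for every $n$ the algebras $\lambda_n(\cA_n)$ and $\cA_{n+1}\cap\iota_{n+1}(\cB_{n+1})'$ are everywhere non-orthogonal; Corollary~\ref{cor:indlimit} then gives $C^*$-irreducibility.

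First I would analyse a single step by the tensor factorisations forced by commutativity of the squares in \eqref{eq:indlimit}. Writing $\tilde\cB_n=\iota_n(M_{b_n})$ and $\cR_n=\cA_n\cap\tilde\cB_n'$ for the relative commutant, one has $\cA_n=\tilde\cB_n\otimes\cR_n$; similarly $\cA_{n+1}=\lambda_n(\cA_n)\otimes\cS_n$ with $\cS_n=\lambda_n(\cA_n)'\cap\cA_{n+1}$, and $\tilde\cB_{n+1}=\tilde\cB_n\otimes\cE_n$ for a unital matrix subalgebra $\cE_n\subseteq\cD_n:=\cR_n\otimes\cS_n$. A short computation with minimal projections shows that the relative commutant at the next level is $\cR_{n+1}=1\otimes\cQ_n$ with $\cQ_n=\cE_n'\cap\cD_n$, and that the required everywhere non-orthogonality of $\lambda_n(\cA_n)$ and $\cR_{n+1}$ is equivalent to the everywhere non-orthogonality of $\cR_n\otimes1$ and $\cQ_n$ inside $\cD_n\cong M_{k}\otimes M_{r}$, where $k=k_n$ is the size of $\cR_n$ and $r=a_{n+1}/a_n$. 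Thus each step reduces to the purely finite dimensional problem: \emph{inside $M_k\otimes M_r$, produce a unital copy of $M_\ell$ (with $\ell=k_{n+1}$ prescribed by the target numbers) that is everywhere non-orthogonal to $M_k\otimes1$}, after which $\cE_n$ is recovered as its relative commutant.

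The technical heart is this embedding problem, of which Lemma~\ref{lm:irr-embedding} treats the balanced case $\ell=k$ (with $r\ge k$). To realise arbitrary pairs $[\cB]\mid[\cA]$ — in particular the infinite-index situation where $\cA$ contains prime powers beyond those of $\cB$, so that the relative commutant must grow without bound — I would need a flexible version allowing $\ell$ to be chosen (with prime factors not present in $k$) at the cost of enlarging $r$. The criterion that drives this is exactly the spanning condition isolated in the Remark following Lemma~\ref{lm:irr-embedding}: identifying $M_k\otimes M_r$ with tuples of $k\times r$ matrices, $M_\ell$ is everywhere non-orthogonal to $M_k\otimes1$ precisely when the range of every minimal projection of $M_\ell$ has its ``column span'' equal to all of $\C^k$. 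I would construct such an $M_\ell$ by a blocked version of the proof of Lemma~\ref{lm:irr-embedding}: choose a unital $\ast$-homomorphism into a corner and add a complementary summand of the form ``$(\,\cdot\,)\otimes(1-f)$'' built from a spanning family of matrices, then conjugate by the resulting unitary; taking $r$ large, with convenient powers of the primes already occurring in $\cB$, leaves enough room for the spanning condition to hold for every minimal projection.

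The main obstacle is precisely this flexible embedding together with its bookkeeping against the supernatural numbers. Naive choices are too rigid: a twist as in Lemma~\ref{lm:irr-embedding} keeps the relative commutant constant (forcing $[\cA]=[\cB]$), while the obvious tensor-product variant only yields $\cQ_n\cong M_{kg}$ with $g\mid r/k$, so that the new relative-commutant factor $\ell/k=g$ must divide $r/k$; this cannot build up a prime power in the relative commutant that outstrips $\cB$ once that prime already divides $k$. Overcoming this — showing that for suitable large $r$ one can still embed the needed $M_\ell$ everywhere non-orthogonally to $M_k\otimes1$, and that the resulting growth factors $b_{n+1}/b_n=kr/\ell$ and $a_{n+1}/a_n=r$ can be steered to any prescribed $[\cB]$ and $[\cA]$ — is the crux. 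Once it is in place, assembling the $\iota_n$ into $\iota\colon\cB\to\cA$ and invoking Corollary~\ref{cor:indlimit} finishes the proof, $\iota$ being by construction a unital embedding of $\cB$ into the given UHF-algebra $\cA$.
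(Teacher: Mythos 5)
Your framework is the same as the paper's: present $\cB$ and $\cA$ as inductive limits of matrix algebras with compatible unital embeddings, invoke Corollary~\ref{cor:indlimit}, and reduce each step to exactly the finite dimensional problem you isolate --- inside $M_k \otimes M_r$ (where $M_k$ is the current relative commutant and $M_r$ the growth of $\cA$), produce a unital copy of the next relative commutant $M_\ell$ everywhere non-orthogonal to $M_k \otimes 1_r$. That reduction, including the slice-type argument behind it, is correct. But you then leave precisely this step unproved: your last paragraph names it as the crux and offers only an unverified sketch (a ``blocked version'' of Lemma~\ref{lm:irr-embedding}, with no argument that the spanning condition can be met for every minimal projection, nor that the resulting growth factors can be steered to the prescribed supernatural numbers). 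So the proposal has a genuine gap at its central point.

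The gap closes with no strengthening of Lemma~\ref{lm:irr-embedding} at all, and the place where you convinced yourself otherwise contains an error. Take $\ell = kg$ and $r = mg$, where $m = b_{n+1}/b_n$ is the growth of $\cB$ and $m \ge k$, and set
$$\cQ_n \,=\, u^*(M_k \otimes 1_m)u \,\otimes\, M_g \;\subseteq\; M_k \otimes M_m \otimes M_g \,=\, M_k \otimes M_r,$$
with $u \in M_k \otimes M_m$ the unitary of Lemma~\ref{lm:irr-embedding}. The fresh factor $M_g$ enters \emph{untwisted}, and this costs nothing: a nonzero positive element of $M_k \otimes 1_r$ is scalar in the $M_g$-leg, so slicing that leg with a state reduces everywhere non-orthogonality of $M_k \otimes 1_r$ and $\cQ_n$ to that of $M_k \otimes 1_m$ and $u^*(M_k \otimes 1_m)u$, which the lemma provides. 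In particular your claimed constraint ``$g \mid r/k$'' is not there, and there is no constraint whatsoever on the primes of $g$; the only requirements are $g \mid r$ and $r/g = m \ge k$, i.e., that $\cB$ grows at least as fast as the accumulated relative commutant. That inequality is arranged by telescoping the factorization of $\cB$ (in the paper's notation, $k_{n+1} \ge d_1 d_2 \cdots d_n$), while the $g$'s are chosen so that they multiply out to $[\cA]/[\cB]$. This is exactly the paper's proof of Theorem~\ref{thm:UHF}: the twist is only ever applied between the accumulated relative commutant and the fresh $\cB$-factor, never to the fresh growth of the relative commutant, because Corollary~\ref{cor:indlimit} demands non-orthogonality only against $\lambda_n(\cA_n)$, which is scalar in the new tensor slots.
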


\begin{proof} Choose sequences $\{k_n\}_{n=1}^\infty$ and $\{\ell_n\}_{n=1}^\infty$ of integers $\ge 2$, such that $k_n$ is a proper divisor in $\ell_n$, for all $n \ge 1$, and $\cA = \bigotimes_{n=1}^\infty M_{\ell_n}$ and $\cB = \bigotimes_{n=1}^\infty M_{k_n}$. Write $\ell_n = k_n d_n$. Upon replacing $k_{n+1}$ with $k_{n+1} k_{n+2} \cdots k_{n+m}$, for some $m \ge 1$, and likewise for $\ell_{n+1}$ --- which does not change $\cB$ or $\cA$ --- we may assume that $k_{n+1} \ge d_1 d_2 \cdots d_n$. 

We construct a commuting diagram:
\begin{equation} \label{eq:UHF}
\begin{split}
\xymatrix{M_{k_1} \ar[d]_-{\iota_1} \ar[r] & M_{k_1} \otimes M_{k_2} \ar[d]_-{\iota_2} \ar[r] 
& M_{k_1} \otimes M_{k_2} \otimes M_{k_3} \ar[d]_-{\iota_3} \ar[r] & \cdots \ar[r] & \cB \ar[d]^-\iota
\\
M_{\ell_1} \ar[r] & M_{\ell_1} \otimes M_{\ell_2} \ar[r] & M_{\ell_1} \otimes M_{\ell_2} \otimes M_{\ell_3} \ar[r] & \cdots \ar[r] & \cA}
\end{split}
\end{equation}
where the horisontal maps are the canonical ones $x \mapsto x \otimes 1_{k_{n+1}}$, respectively, $x \mapsto x \otimes 1_{\ell_{n+1}}$, and where the vertical maps $\iota_n$ will be defined to be unital \sh s such that the two algebras
\begin{equation} \label{eq:A}
\iota_{n+1}(M_{k_1} \otimes \cdots \otimes M_{k_{n+1}})' \cap M_{\ell_1} \otimes \cdots \otimes M_{\ell_{n+1}} \quad \text{and} \quad M_{\ell_1} \otimes \cdots \otimes M_{\ell_n} \otimes 1_{\ell_{n+1}}
\end{equation}
are everywhere non-orthogonal.
This will ensure that $\iota \colon \cB \to \cA$ is $C^*$-irreducible, cf.\ Corollary~\ref{cor:indlimit}.

Choose $\iota_1$ to be any unital \sh. Suppose that $n \ge 1$ and that $\iota_n$ has been constructed. We proceed to construct $\iota_{n+1}$. Set $k = k_1 k_2 \cdots k_n$, $\ell = \ell_1 \ell_2 \cdots \ell_n$, and $d = d_1 d_2 \cdots d_n$, so that $\ell = d k$. Identify $M_{k_1} \otimes \cdots \otimes M_{k_n}$ with $M_k$ and $M_{\ell_1} \otimes \cdots \otimes M_{\ell_n}$ with $M_k \otimes M_d$ in such a way that $\iota_n(x) = x \otimes 1_\ell$. Identify $M_{\ell_{n+1}}$ with $M_{k_{n+1}} \otimes M_{d_{n+1}}$. In this notation, the $n$th square in the diagram \eqref{eq:UHF} above takes the form:
$$\xymatrix@C+3cm{M_k \ar[d]_-{\iota_n = \mathrm{id}_k \otimes 1_d}  \ar[r]^-{\mathrm{id}_k \otimes 1_{k_{n+1}}} & M_k \otimes M_{k_{n+1} }\ar@{-->}[d]^-{\iota_{n+1} = \mathrm{id}_k \otimes j} \\ M_k \otimes M_d 
\ar[r]_-{\mathrm{id}_{k} \otimes \mathrm{id}_d \otimes 1_{k_{n+1}} \otimes 1_{d_{n+1}} } & M_k \otimes M_d \otimes M_{k_{n+1}} \otimes M_{d_{n+1}}}$$
where the \sh{} $j \colon M_{k_{n+1}} \to M_d \otimes M_{k_{n+1}} \otimes M_{d_{n+1}}$ is defined as follows ensuring that Equation~\eqref{eq:A} holds. Choose a unitary $u \in M_d \otimes M_{k_{n+1}}$ such that $M_d \otimes 1_{k_{n+1}}$ and $u^*(M_d \otimes 1_{k_{n+1}})u$ are everywhere orthogonal. Set $j(x) = u^*(1_d \otimes x)u \otimes 1_{d_{n+1}}$, for $x \in M_{k_{n+1}}$. 
Then 
$$\iota_{n+1}(M_k \otimes M_{k_{n+1}}) = M_k \otimes u^*(1_d \otimes M_{k_{n+1}})u \otimes 1_{d_{n+1}}.$$ 
The commutant of this algebra is $1_k \otimes u^*(M_d \otimes 1_{k_{n+1}})u \otimes M_{d_{n+1}}$, which by the choice of $u$ is everywhere non-orthogonal to $M_k \otimes M_d \otimes 1_{k_{n+1}} \otimes 1_{d_{n+1}} = M_{\ell_1} \otimes \cdots \otimes M_{\ell_n} \otimes 1_{\ell_{n+1}}$, as desired.
\end{proof}

\begin{remark} It seems plausible that the conclusion of Theorem~\ref{thm:UHF} holds more generally for any pair of infinite dimensional simple unital AF-algebras $\cA$ and $\cB$ whenever the latter admits a unital embedding into the former.
\end{remark}

\begin{remark} While AF-algebras are completely classified by their ordered $K_0$-group, the question of classifying inclusions $\cB \subseteq \cA$ of UHF-algebras (or AF-algebras) is far more subtle (even under the extra assumptions such as $C^*$-irreducibility). An inclusion $\cB \to \cA$ between AF-algebras induces a map $K_0(\cB) \to K_0(\cA)$ which classifies the inclusion up to \emph{approximate unitary equivalence}. To understand the inclusion (up to conjugacy) we will need a classification of the inclusion map up to \emph{unitary equivalence}. 
\end{remark}

\noindent It is well-known that sub-\Cs s of an AF-algebra need not be AF. Blackadar, \cite{Bla:symmetries},  constructed an example of a $\Z/2$-action $\alpha$ of the CAR-algebra $\cA$ so that the fixed point algebra $\cA^\alpha$ is not AF (in fact, $\cA^\alpha$ is a Bunce-Deddens algebra). The inclusion $\cA^\alpha \subseteq \cA$ is $C^*$-irreducible and of index 2. This example still leaves open the following question (that probably has a negative answer):

\begin{question} Let $\cB \subseteq \cA$ be a $C^*$-irreducible inclusion of simple AF-algebras. Is it true that every intermediate \Cs{} is also AF?\footnote{This question has subsequently  been answered in the negative in \cite{Echterhoff-Rordam-2021}.}
\end{question}

\begin{example} Here is another---more conceptual---example of $C^*$-irreducible inclusions of UHF-algebras (that admits many generalizations): Let $\cB = \bigotimes_{n=1}^\infty M_{k_n}(\C)$ be a UHF-algebra with respect to some sequence $\{k_n\}$ of integers $\ge 2$. Let $d \ge 2$ be an integer and choose an outer action of $\Z_d$ on $\cB$ with the property that it leaves invariant each finite dimensional sub-\Cs{} $\bigotimes_{n=1}^N M_{k_n}(\C)$ of $\cB$. One can for example take $\alpha = \bigotimes_{n=1}^\infty \mathrm{Ad}_{u_n}$, where $u_n$ is a unitary in $M_{k_n}(\C)$ of order $d$, and such that $u_n^k$ is non-scalar when $k = 1,2, \dots, d-1$. It then follows from Theorem~\ref{thm:crossed-product} that $\cB \subseteq \cB \rtimes \Z_d$ is $C^*$-irreducible. Moreover, 
$$\cB \rtimes \Z_d = \varinjlim \Big( \bigotimes_{n=1}^N M_{k_n}(\C) \rtimes \Z_d\Big),$$
and $\bigotimes_{n=1}^N M_{k_n}(\C) \rtimes \Z_d$ is isomorphic to the direct sum of $d$ copies of $\bigotimes_{n=1}^N M_{k_n}(\C)$, so $\cB \rtimes \Z_d$ is an AF-algebra.
\end{example}

\noindent We conclude this section by showing that any unital inclusion $\cB \subseteq \cA$ of AF-algebras (simple or not) admits an inductive limit decomposition as in \eqref{eq:indlimit}. For sub-\Cs s $\cA, \cB$ of a common \Cs{} $\mathcal{E}$, write $\cB \subseteq^\delta \cA$ if $\mathrm{dist}(b,\cA) \le \delta$, for all $b$ in the unit ball of $\cB$.

We shall use the following powerful theorem of Christensen:

\begin{theorem}[{Christensen, \cite{Christensen:Acta}}] \label{thm:EC}
Let $\mathcal{E}$ be a unital \Cs, let $\cA,\cB$ be unital sub-\Cs s of $\mathcal{E}$, with $\cB$ finite dimensional, let $0 < \delta < 10^{-4}$, and assume that $\cB \subseteq^\delta \cA$. Then there exist a unitary element $u \in \mathcal{E}$ such that $u\cB u^* \subseteq \cA$ and $\|1-u\| \le 120 \, \delta^{1/2}$.
\end{theorem}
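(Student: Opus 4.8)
The plan is to route the one-sided near-inclusion through a completely positive approximation of the inclusion and then straighten that approximation to an honest conjugation, keeping every constant independent of $\dim\cB$. First I would exploit that a finite-dimensional $\cB$ is injective. From the hypothesis $\cB \subseteq^{\delta} \cA$ one selects, for $b$ in the unit ball of $\cB$, an element of $\cA$ within $\delta$ of $b$; using the Arveson extension theorem together with averaging over the compact unitary group $\cU(\cB)$ one manufactures a \emph{unital completely positive} map $\psi\colon \cB \to \cA$ with $\|\psi(b)-b\|\le K\delta\,\|b\|$ for a universal constant $K$. The reason for passing through a completely positive map, rather than perturbing a fixed system of matrix units of $\cB$ directly, is exactly to avoid a constant growing with the number of matrix units (hence with $\dim\cB$); injectivity of $\cB$ is what lets this averaging be dimension-free.

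Second, since $\psi(b)$ lies within $K\delta\|b\|$ of $b$ and $\cB$ is genuinely multiplicative, $\psi$ is approximately multiplicative, with multiplicative defect $\|\psi(aa^*)-\psi(a)\psi(a)^*\|\le C\delta\|a\|^2$. Here I would invoke the Cauchy--Schwarz inequality for completely positive maps, $\|\psi(ab)-\psi(a)\psi(b)\|\le \|\psi(aa^*)-\psi(a)\psi(a)^*\|^{1/2}\,\|b\|$, which turns a defect of order $\delta$ into a homomorphism defect of order $\delta^{1/2}$. This is precisely where the square root in the statement enters. One then rounds $\psi$ to a genuine unital $*$-homomorphism $\rho\colon \cB\to\cA$ with $\|\rho(b)-b\|\le C'\delta^{1/2}\|b\|$, the rounding of a nearly-multiplicative c.p.\ map on a finite-dimensional domain being controlled by the same amenability/injectivity input as before.

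Third, it remains to implement $\rho$ by a unitary close to $1$, and this step is clean and dimension-free by a Haar-averaging intertwiner. Let $\iota\colon\cB\hookrightarrow\mathcal{E}$ be the inclusion and set $a=\int_{\cU(\cB)}\rho(v)\,\iota(v^{*})\,dv\in\mathcal{E}$. Left-invariance of Haar measure gives $\rho(b)\,a=a\,\iota(b)$ for all $b\in\cB$, while $\|1-a\|\le \sup_{v\in\cU(\cB)}\|\rho(v)-\iota(v)\|\le C'\delta^{1/2}$, so for $\delta$ small $a$ is invertible. Since $a$ intertwines the two $*$-homomorphisms, its polar part $u$ is a unitary with $u\,\iota(\cdot)\,u^{*}=\rho(\cdot)$; thus $u\cB u^{*}=\rho(\cB)\subseteq\cA$ exactly, and $\|1-u\|$ is again of order $\delta^{1/2}$. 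Tracking the three constants through these steps is what produces a bound of the stated shape $\le 120\,\delta^{1/2}$.

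I expect the genuinely hard part to be the dimension-independence in the first two steps. A direct perturbation of a matrix-unit system of $\cB$ is elementary but yields constants proportional to $(\dim\cB)^{2}$; the whole force of the theorem is that finite-dimensionality is used only through injectivity/amenability of $\cB$ (equivalently, the vanishing of the relevant Hochschild cohomology with a \emph{universal} constant), so that both the completely positive approximation and the rounding to an exact homomorphism carry dimension-free bounds. The loss $\delta\rightsquigarrow\delta^{1/2}$ is intrinsic and unavoidable: it is exactly the cost, via the Cauchy--Schwarz inequality for completely positive maps, of converting an approximately multiplicative map into an exact $*$-homomorphism implemented by a unitary.
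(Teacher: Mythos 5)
The first thing to say is that the paper contains no proof of this statement at all: Theorem~\ref{thm:EC} is quoted verbatim, constants included, from Christensen's paper \cite{Christensen:Acta} and used as a black box. So your argument has to stand on its own, and it does not: the first two of your three steps are themselves the content of Christensen's theorem, and you assert rather than prove them.

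Concretely, Step 1 contains no construction. The hypothesis $\cB \subseteq^\delta \cA$ provides only a non-linear selection $b \mapsto a_b$, and neither tool you invoke can convert it into a u.c.p.\ map $\psi \colon \cB \to \cA$ with dimension-free error. Arveson's extension theorem produces maps \emph{into} injective objects ($B(H)$, or $\cB$ itself, e.g.\ a conditional expectation $\mathcal{E} \to \cB$, which points the wrong way); it can never be used to land inside the arbitrary norm-closed subalgebra $\cA$. And ``averaging over $\cU(\cB)$'' is unspecified: there is no family of maps into $\cA$ available to average, and averaging the elements $a_v v^*$ produces a single element of $\mathcal{E}$ (your Step 3 intertwiner), not a linear map; done with the non-multiplicative selection $v \mapsto a_v$ it only yields an \emph{approximate} intertwiner, which is why an exact homomorphism $\rho$ must already be in hand before Step 3 can run. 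Statements of the kind you want in Step 1 --- a near-inclusion of a finite-dimensional (or nuclear) algebra is implemented by a nearby u.c.p.\ map with universal constants --- do exist in the later perturbation literature (Christensen--Sinclair--Smith--White--Winter), but they are substantive lemmas there, not consequences of ``Arveson plus averaging.'' Step 2 has the same defect compounded by a range problem: rounding an approximately multiplicative u.c.p.\ map on $\cB$ to an exact $*$-homomorphism \emph{with values in $\cA$} and with dimension-free control is essentially equivalent to the theorem being proved. The natural Stinespring or cohomological arguments produce homomorphisms into $B(H)$ or into the weak closure $\cA''$, and descending from $\cA''$ to the norm-closed algebra $\cA$ is a separate, genuinely nontrivial matter; naming ``vanishing of Hochschild cohomology with a universal constant'' identifies the difficulty but does not resolve it.

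What is correct and complete is Step 3: given a unital $*$-homomorphism $\rho \colon \cB \to \cA$ with $\|\rho - \iota\| \le C'\delta^{1/2} < 1$, the Haar average $a = \int_{\cU(\cB)} \rho(v)\,v^* \, dv$ exactly intertwines ($\rho(b)a = ab$ for all $b \in \cB$), is invertible, $|a|$ commutes with $\cB$, and its polar part $u$ is a unitary implementing $\rho$ with $\|1-u\|$ of the same order --- this is the standard, and easiest, part of any proof of this type. Finally, your closing claim that the loss $\delta \rightsquigarrow \delta^{1/2}$ is ``intrinsic and unavoidable'' is an assertion you have not established, and it should not be presented as part of the argument.
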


\noindent The lemma below is standard AF-algebra knowledge. It is an easy consequence of Christensen's theorem above, but can be proved directly with less heavy machinery using stable relations of matrix units. 

\begin{lemma} \label{lm:AF} Given a unital inclusion $\cB \subseteq \mathcal{E}$, where $\cB$ is finite dimensional and $\mathcal{E}$ is an AF-algebra, and given a finite subset $\mathcal{F} \subseteq \mathcal{E}$, and  $\ep >0$. Then there exists a unital finite dimensional intermediate \Cs{} $\cB \subseteq \cA \subseteq \mathcal{E}$ such that and $\mathrm{dist}(f,\cA) < \ep$, for all $f \in \mathcal{F}$.
\end{lemma}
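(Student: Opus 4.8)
The plan is to write $\mathcal{E} = \overline{\bigcup_n \mathcal{E}_n}$ as the closure of an increasing union of finite-dimensional sub-\Cs s, chosen to be unital (each containing $1_\mathcal{E}$), which is possible since $\mathcal{E}$ is a unital AF-algebra. First I would fix $\delta$ with $0 < \delta < 10^{-4}$ small enough that $240\,\delta^{1/2}\,\max_{f\in\mathcal{F}}\|f\| < \ep/2$. Since $\cB$ is finite-dimensional, its unit ball is compact, so a standard covering argument shows that $\cB \subseteq^\delta \mathcal{E}_n$ once $n$ is large; and since $\bigcup_n \mathcal{E}_n$ is dense and $\mathcal{F}$ is finite, enlarging $n$ further we may also assume $\mathrm{dist}(f,\mathcal{E}_n) < \ep/2$ for every $f \in \mathcal{F}$. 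Fix such an $n$.

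Next I would apply Christensen's Theorem~\ref{thm:EC}, with ambient algebra $\mathcal{E}$, with the finite-dimensional algebra $\cB$, and with $\mathcal{E}_n$ playing the role of ``$\cA$'': since $\cB \subseteq^\delta \mathcal{E}_n$ and $\delta < 10^{-4}$, there is a unitary $u \in \mathcal{E}$ with $u\cB u^* \subseteq \mathcal{E}_n$ and $\|1-u\| \le 120\,\delta^{1/2}$. Set $\cA := u^*\mathcal{E}_n u$. This is a unital finite-dimensional intermediate \Cs: the inclusion $u\cB u^* \subseteq \mathcal{E}_n$ gives $\cB \subseteq \cA$, and $u \in \mathcal{E}$ gives $\cA \subseteq \mathcal{E}$.

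It then remains to estimate $\mathrm{dist}(f,\cA)$ for $f \in \mathcal{F}$. Choosing $g \in \mathcal{E}_n$ with $\|f-g\| < \ep/2$, the element $u^*gu$ lies in $\cA$, whence
$$\mathrm{dist}(f,\cA) \le \|f - u^*gu\| \le \|f - u^*fu\| + \|f-g\|.$$
Since $\|f - u^*fu\| = \|uf - fu\| \le 2\|1-u\|\,\|f\| \le 240\,\delta^{1/2}\,\|f\| < \ep/2$ by the choice of $\delta$, I would conclude $\mathrm{dist}(f,\cA) < \ep$, as required. The only real content is the invocation of Christensen's theorem, which straightens the approximate containment $\cB \subseteq^\delta \mathcal{E}_n$ into an exact one by a unitary close to the identity; everything else is the compactness reduction and a two-line perturbation estimate, so I do not anticipate a genuine obstacle beyond correctly bookkeeping the two error budgets of $\ep/2$.
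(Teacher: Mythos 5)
Your proof is correct, and it follows exactly the route the paper itself indicates: the paper gives no written proof but remarks that the lemma ``is an easy consequence of Christensen's theorem,'' which is precisely the derivation you carry out (approximate containment $\cB \subseteq^\delta \mathcal{E}_n$ by compactness of the unit ball of $\cB$, Christensen's Theorem~\ref{thm:EC} to conjugate $\cB$ into $\mathcal{E}_n$, then take $\cA = u^*\mathcal{E}_n u$ with the standard perturbation estimate). Your bookkeeping of the two $\ep/2$ budgets and the estimate $\|f-u^*fu\| \le 2\|1-u\|\,\|f\|$ are both sound, so there is nothing to fix.
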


\noindent We shall need Christensen's theorem in the following adaption:

\begin{lemma} \label{lm:AF2}
Given an integer $n \ge 1$ and $\ep > 0$. Then there exists $\delta>0$ such that whenever $\mathcal{E}$ is a unital AF-algebra with unital finite dimensional sub-\Cs s 
$$\cB_1 \subseteq \cB_2 \subseteq \cdots \subseteq \cB_n \subseteq \cA_j, \qquad j=1,2,$$ 
satisfying $\cA_1 \subseteq^\delta \cA_2$, then there exist a unitary $u \in \mathcal{E}$ with $\|1-u\| \le \ep$, $u\cA_1 u^* \subseteq \cA_2$, and $u\cB_j u^* = \cB_j$, for $1 \le j \le n$. 
\end{lemma}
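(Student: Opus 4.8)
The plan is to deduce the result from Christensen's theorem (Theorem~\ref{thm:EC}) in two stages: first to move $\cA_1$ into $\cA_2$ by a near-identity unitary, ignoring the flag; then to correct the position of the flag $\cB_1 \subseteq \cdots \subseteq \cB_n$ by a second near-identity unitary that lives in $\cA_2$, so that composing the two keeps $\cA_1$ inside $\cA_2$.

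First I would record a two-sided form of Christensen's theorem for a \emph{single} finite-dimensional subalgebra: if $\cB, \mathcal{C}$ are unital finite-dimensional subalgebras of a unital \Cs{} with $\mathcal{C} \subseteq^\delta \cB$, then for $\delta$ small there is a unitary $u$ with $\|1-u\| \le 120\,\delta^{1/2}$ and $u\mathcal{C} u^* \subseteq \cB$; a dimension count (close finite-dimensional subalgebras are isomorphic for small $\delta$) upgrades the containment to equality $u\mathcal{C} u^* = \cB$. This serves as the base case $n=1$ of a \emph{flag} version, which is the heart of the matter: for finite-dimensional flags $\cB_1 \subseteq \cdots \subseteq \cB_n$ and $\mathcal{C}_1 \subseteq \cdots \subseteq \mathcal{C}_n$ in a \Cs{} $\cD$, with each $\mathcal{C}_j$ within (two-sided) distance $\delta$ of $\cB_j$, one finds a single near-identity unitary $w \in \cD$ with $w\mathcal{C}_j w^* = \cB_j$ for all $j$.

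I would prove the flag version by induction on $n$. Apply the single-algebra statement to the top pair $(\mathcal{C}_n,\cB_n)$ to get $w_n$ with $\|1-w_n\| \le 120\,\delta^{1/2}$ and $w_n\mathcal{C}_n w_n^* = \cB_n$. Conjugating the lower flag by $w_n$ displaces each $\mathcal{C}_j$ by at most $2\|1-w_n\|$, so the flags $\cB_1 \subseteq \cdots \subseteq \cB_{n-1}$ and $w_n\mathcal{C}_1 w_n^* \subseteq \cdots \subseteq w_n\mathcal{C}_{n-1}w_n^*$ now both lie inside the finite-dimensional algebra $\cB_n$ and are within $\delta + 240\,\delta^{1/2}$ of each other. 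Applying the inductive hypothesis \emph{inside} $\cB_n$ yields a unitary $w' \in \cB_n$, close to $1$, with $w'(w_n \mathcal{C}_j w_n^*)w'^* = \cB_j$ for $j < n$; and since $w' \in \cB_n$ it automatically satisfies $w'\cB_n w'^* = \cB_n$. Then $w = w'w_n$ works, and $\|1-w\| \le \|1-w'\| + 120\,\delta^{1/2} \le \ep$ once $\delta$ is chosen small.

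Finally, to prove the lemma: given $\ep$, choose $\delta$ so small that Theorem~\ref{thm:EC} applied to $\cA_1 \subseteq^\delta \cA_2$ produces a unitary $v$ with $v\cA_1 v^* \subseteq \cA_2$ and $\|1-v\| \le 120\,\delta^{1/2}$, and so that the flag version applies to $\cB_1 \subseteq \cdots \subseteq \cB_n$ and $v\cB_1 v^* \subseteq \cdots \subseteq v\cB_n v^*$ inside $\cA_2$ (they are within $2\|1-v\|$ of each other). This gives $w \in \cA_2$, close to $1$, with $w(v\cB_j v^*)w^* = \cB_j$. Setting $u = wv$ yields $u\cB_j u^* = \cB_j$ for all $j$, while $u\cA_1 u^* = w(v\cA_1 v^*)w^* \subseteq w\cA_2 w^* = \cA_2$ because $w \in \cA_2$, and $\|1-u\| \le \|1-w\| + \|1-v\| \le \ep$ for $\delta$ small. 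The main obstacle is the flag-fixing: moving $\cA_1$ into $\cA_2$ is immediate from Christensen, but simultaneously preserving the \emph{nested} chain requires the inductive argument above, whose crucial point is that after straightening the top algebra $\cB_n$ the residual displacement of the lower algebras is still governed by $\delta$ (via the $120\,\delta^{1/2}$ bound), and that the lower correction can be carried out by a unitary drawn from $\cB_n$ itself, so that it preserves $\cB_n$ and composes correctly.
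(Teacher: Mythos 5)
Your proposal is correct and follows essentially the same route as the paper: repeated applications of Christensen's theorem working down the flag, with the crucial observation (identical in both proofs) that each correcting unitary can be drawn from the already-straightened algebra one level up ($\cA_2$ for $\cB_n$, then $\cB_{j+1}$ for $\cB_j$), so it preserves everything above as well as the containment $u\cA_1u^* \subseteq \cA_2$, and with the same $120\,\delta^{1/2}$ error bookkeeping. The paper writes this as one downward iteration rather than as a separately packaged flag-straightening lemma proved by induction, but that is purely organizational; even the tacit dimension-count upgrade from $v\cB_j v^* \subseteq u\cB_j u^*$ to equality is common to both arguments.
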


\begin{proof} We prove this by repeated applications of Christensen's theorem as follows.
For $\delta >0$ (to be determined below), set $\gamma_{n+1}= 120 \, \delta^{1/2}$ and define $\eta_j>0$ and $\gamma_j>0$, $1 \le j \le n$, inductively as follows: $\eta_j = 120 \, (2\gamma_{j+1})^{1/2}$ and $\gamma_j = \gamma_{j+1}+\eta_j$. Choose $0 <\delta < 10^{-4}$ such that $\gamma_1 \le \ep$ and such that $2\gamma_j < 10^{-4}$, for $2 \le j \le n$. 

At step one use Christensen's theorem to find a unitary $u_{0} \in \mathcal{E}$ with $\|1-u_{0}\| \le \gamma_{n+1}$ and $u_0\cA_1 u_0^* \subseteq \cA_2$.  Then $\cB_n \subseteq^{2\gamma_{n+1}} u_0\cB_n u_0^*$. Note that $\cB_n$ and $u_0\cB_n u_0^*$ both are sub-\Cs s of $\cA_2$. Hence there exists a unitary $v_n \in \cA_2$ with $\|1-v_n\| \le \eta_n$ and $v_n\cB_n v_n^* = u_0 \cB_n u_0^*$. Set $u_n = v_n^*u_0$. It follows that $u_n \cB_n u_n^* = \cB_n$, $u_n \cA_1 u_n^* \subseteq \cA_2$, and $\|1-u_n\| \le \gamma_n$. 

For the next step, note that $\cB_{n-1} \subseteq^{2\gamma_n} u_n\cB_{n-1}u_n^*$ and $\cB_{n-1}, u_n\cB_{n-1}u_n^*$ both are sub-\Cs s of $\cB_n$. Hence there exists a unitary $v_{n-1}$ in $\cB_n$ with $v_{n-1}\cB_{n-1} v_{n-1}^* = u_n\cB_{n-1}u_n^*$ and $\|1-v_{n-1}\| \le \eta_{n-1}$. The unitary element $u_{n-1} = v_{n-1}^*u_n$ then satisfies $u_{n-1}\cB_j u_{n-1}^* = \cB_j$, for $j=n-1,n$, $u_{n-1}\cA_1u_{n-1}^* \subseteq \cA_2$, and $\|1-u_{n-1}\| \le \gamma_{n-1}$. 

Continue like this until we have arrived at a unitary $u_2 \in \mathcal{E}$ satisfying $u_2 \cB_j u_2^* = \cB_j$, for $2  \le j \le n$, $u_2 \cA_1 u_2^* \subseteq \cA_2$, and $\|1-u_2\| \le \gamma_2$. Then $\cB_{1} \subseteq^{2\gamma_2} u_2\cB_{1}u_2^*$ and $\cB_{1}, u_2\cB_{1}u_2^*$ are both sub-\Cs s of $\cB_2$. Hence there exists a unitary $v_{1}$ in $\cB_2$ with $v_{1}\cB_{1} v_{1}^* = u_2\cB_{1}u_2^*$ and $\|1-v_{1}\| \le \eta_{1}$. The unitary element $u:=u_{1} = v_{1}^*u_2$ then has the desired properties. 
\end{proof}

\begin{proposition} Let $\cB \subseteq \cA$ be a unital inclusion of AF-algebras. Then there exist  an increasing sequence $\{\cA_n\}_{n=1}^\infty$ of finite dimensional sub-\Cs s of $\cA$ such that 
$$\cA = \overline{\bigcup_{n=1}^\infty \cA_n}, \qquad \text{and} \qquad \cB = \overline{\bigcup_{n=1}^\infty (\cA_n \cap \cB)}.$$
In particular, with $\cB_n = \cA_n \cap \cB$, the inclusion $\cB \subseteq \cA$ arises from a diagram as in \eqref{eq:indlimit} with all $\cB_n$ and $\cA_n$ finite dimensional.
\end{proposition}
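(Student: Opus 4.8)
The plan is to realize the two AF-filtrations of $\cB$ and $\cA$ by a single nested sequence through an Elliott-type approximate intertwining, using Christensen's perturbation theorem in the form of Lemmas~\ref{lm:AF} and~\ref{lm:AF2}. First I would reduce and fix notation. Write $\cB=\overline{\bigcup_m\cB_m}$ and $\cA=\overline{\bigcup_k\cD_k}$ with $\cB_m$ and $\cD_k$ increasing sequences of finite-dimensional unital sub-\Cs s containing the common unit, and assume all given maps are inclusions. It suffices to produce an increasing sequence $\{\cA_n\}$ of finite-dimensional unital sub-\Cs s of $\cA$ with $\overline{\bigcup_n\cA_n}=\cA$ together with a non-decreasing sequence $m(n)\to\infty$ such that $\cB_{m(n)}\subseteq\cA_n$ for all $n$. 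Indeed, then $\bigcup_n(\cA_n\cap\cB)\supseteq\bigcup_n\cB_{m(n)}=\bigcup_m\cB_m$, so $\cB=\overline{\bigcup_n(\cA_n\cap\cB)}$; and setting $\cB_n:=\cA_n\cap\cB$ (which is finite-dimensional, being a subalgebra of $\cA_n$) makes the diagram \eqref{eq:indlimit} regular by definition.

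Next I would run the induction. Suppose that after $n$ stages I have a genuine chain $\cA_1\subseteq\cdots\subseteq\cA_n$ of finite-dimensional subalgebras whose top contains $\cB_{m(n)}$. To extend it, set $m(n+1)=m(n)+1$ and apply Lemma~\ref{lm:AF} to the inclusion $\cB_{m(n)+1}\subseteq\cA$, with the finite set comprising an $\eta$-net of the unit ball of $\cA_n$ and an $\eta$-net of the unit ball of $\cD_{n+1}$. This produces a finite-dimensional $\mathcal{F}_{n+1}$ with $\cB_{m(n)+1}\subseteq\mathcal{F}_{n+1}\subseteq\cA$ such that $\cA_n\subseteq^{\delta}\mathcal{F}_{n+1}$ and $\cD_{n+1}\subseteq^{\delta}\mathcal{F}_{n+1}$ with $\delta$ as small as I wish. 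Since $\cB_1\subseteq\cdots\subseteq\cB_{m(n)}$ lies in both $\cA_n$ and $\mathcal{F}_{n+1}$, Lemma~\ref{lm:AF2} (with a prescribed $\varepsilon_{n+1}$ fixing the required $\delta$) yields a unitary $u_{n+1}$ with $\|1-u_{n+1}\|\le\varepsilon_{n+1}$, $u_{n+1}\cA_n u_{n+1}^*\subseteq\mathcal{F}_{n+1}$, and $u_{n+1}\cB_j u_{n+1}^*=\cB_j$ for $j\le m(n)$. I then replace the whole existing chain by its conjugate under $u_{n+1}$ and append $\mathcal{F}_{n+1}$ as the new top; the top still contains $\cB_{m(n+1)}$ and the chain stays genuinely nested at this stage.

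Choosing $\{\varepsilon_n\}$ summable, the tail products $p_n:=\lim_{N}u_N\cdots u_{n+1}$ converge to unitaries, and I would define the final chain by $\cA_n:=p_n\,\mathcal{F}_n\,p_n^*$, where $\mathcal{F}_n$ is the algebra created at stage $n$. The identity $p_n=p_{n+1}u_{n+1}$ gives $\cA_n=p_{n+1}(u_{n+1}\mathcal{F}_n u_{n+1}^*)p_{n+1}^*\subseteq p_{n+1}\mathcal{F}_{n+1}p_{n+1}^*=\cA_{n+1}$, so the $\cA_n$ are genuinely nested; since $\|1-p_n\|\le\sum_{k>n}\varepsilon_k\to0$ and $\mathcal{F}_n$ approximates $\cD_n$, a routine $\varepsilon$-estimate shows $\bigcup_n\cA_n$ dense in $\cA$; and because every correction unitary $u_k$ with $k>n$ fixes $\cB_j$ for $j\le m(n)$, the limit $p_n$ also fixes each such $\cB_j$ setwise, whence $\cB_{m(n)}=p_n\cB_{m(n)}p_n^*\subseteq p_n\mathcal{F}_n p_n^*=\cA_n$.

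The hard part is reconciling \emph{genuine} nestedness of the $\cA_n$ with \emph{exact} capture of the filtration of $\cB$: one cannot keep the earlier algebras fixed while forcing the new algebra to contain both the previous one and the next level $\cB_{m+1}$, since the \Cs{} they generate is typically infinite-dimensional. The device resolving this is to perturb the whole chain at each step and retain only the convergence of the accumulated unitaries; what makes it consistent is that once a level $\cB_j$ has been captured, Lemma~\ref{lm:AF2} guarantees that every later correction fixes $\cB_j$ setwise, so captured levels are never lost and $p_n$ restricts to an automorphism of each $\cB_j$ with $j\le m(n)$. The remaining work is the bookkeeping of the tolerances $\eta,\delta,\varepsilon_n$ ensuring Lemma~\ref{lm:AF2} applies and the density estimate goes through.
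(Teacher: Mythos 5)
Your proposal is correct and follows essentially the same route as the paper's proof: build finite-dimensional algebras containing the levels of the $\cB$-filtration via Lemma~\ref{lm:AF}, correct them into a genuine chain with unitaries from Lemma~\ref{lm:AF2} that fix the already-captured $\cB_j$ setwise, and conjugate by the convergent tail products of these unitaries to get nestedness, density, and exact capture of $\cB$. The only differences (using a second filtration $\{\cD_k\}$ with $\eta$-nets instead of a dense sequence of $\cA$, and conjugating the whole chain at each stage rather than deferring all corrections to the end) are cosmetic.
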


\begin{proof} Let $\{\cB_n\}_{n=1}^\infty$ be an increasing sequence of finite dimensional sub-\Cs s of $\cB$ whose union $\bigcup_{n=1}^\infty \cB_n$ is dense in $\cB$, and choose a dense sequence  
$\{a_n\}_{n=1}^\infty$ in $\cA$. 
For each $n \ge 1$ we find a finite dimensional \Cs{} $\widetilde{\cA}_n$ and a unitary $u_n \in \cA$ such that
\begin{enumerate}
\item[(a)] $u_n \widetilde{\cA}_n u_n^* \subseteq \widetilde{\cA}_{n+1}$,
\item[(b)] $\cB_n \subseteq \widetilde{\cA}_n$,
\item[(c)] $u_n \cB_j u_n^* = \cB_j$, for $1 \le j \le n$,
\item[(d)] $\|1-u_n\| \le 2^{-n}$,
\item[(e)] $\mathrm{dist}(a_j,\widetilde{\cA}_n) < 1/n$, for $1 \le j \le n$.
\end{enumerate}

To start, it follows from Lemma~\ref{lm:AF} that we can find a finite dimensional sub-\Cs{} $\widetilde{\cA}_1$ of $\cA$ satisfying (b) and (e). 

Suppose that $n \ge 1$ and that we have found $\widetilde{\cA}_1, \widetilde{\cA}_2, \dots, \widetilde{\cA}_n$ and unitaries $u_1, \dots, u_{n-1}$ as above. (If $n=1$, then no unitary $u_j$ has been found yet.) Choose $\delta >0$ as in Lemma~\ref{lm:AF2} corresponding to our given $n \ge 1$ and to $\ep = 2^{-n}$.  Use Lemma~\ref{lm:AF} to find a finite dimensional sub-\Cs{} $\widetilde{\cA}_{n+1}$ of $\cA$ satisfying (b), (e) and $\widetilde{\cA}_n \subseteq^\delta \widetilde{\cA}_{n+1}$, and then use Lemma~\ref{lm:AF2} to find a unitary $u_n \in \cA$ satisfying (a), (c) and (d).

Set $v_n = \lim_{N \to \infty} u_N u_{N-1} \cdots u_n$ (the sequence converges by (d)). Then $v_n$ is a unitary element in $\cA$ and $\|v_n-1\| \le 2^{-n+1}$. Set $\cA_n = v_n \widetilde{\cA}_n v_n^*$. Then $\cB_n \subseteq \cA_n$ by (b) and (c). As  $v_{n+1} = v_nu_n$ we see from (a) that $\cA_n \subseteq \cA_{n+1}$, for all $n \ge 1$. Since
$$\mathrm{dist}(a_j,\cA_n) \le \mathrm{dist}(a_j,\widetilde{\cA}_n) + \|a_j - v_na_jv_n^*\| \le 1/n+2\|v_n-1\| \|a_j\|,$$
for $1 \le j \le n$,  we conclude that $\bigcup_{n=1}^\infty \cA_n$ is dense in $\cA$. Finally, $\bigcup_{n=1}^\infty (\cA_n \cap \cB)$ is dense in $\cB$ because $\cB_n \subseteq \cA_n \cap \cB$.  This completes the proof.
\end{proof}

\section{Tensor products} \label{sec:tensor}

\noindent We investigate here how the property of being $C^*$-irreducible behaves under forming tensor products. Using the theorem of Zacharias and Zsido (Theorem~\ref{thm:tensorI}) we already mentioned (and proved) the following:

\begin{theorem}  \label{thm:tensorII} 
Let $\cB \subseteq \cA$ be a $C^*$-irreducible inclusion and let $\mathcal{E}$ be a unital simple \Cs{} with Wassermann's property (S). Then $\mathcal{E} \otimes \cB  \subseteq \mathcal{E} \otimes \cA$ is $C^*$-irreducible.
\end{theorem}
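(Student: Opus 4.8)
The plan is to reduce the statement to the Zacharias--Zsido theorem (Theorem~\ref{thm:tensorI}), the hypothesis that $\cB \subseteq \cA$ is $C^*$-irreducible, and Takesaki's theorem that the minimal tensor product of two simple \Cs s is simple. Concretely, I would fix an arbitrary intermediate \Cs{} $\cD$ with $\mathcal{E} \otimes \cB \subseteq \cD \subseteq \mathcal{E} \otimes \cA$ and aim to show that $\cD$ is simple.

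First I would note that, since $\mathcal{E} \otimes \C \subseteq \mathcal{E} \otimes \cB \subseteq \cD$, the algebra $\cD$ is in particular an intermediate \Cs{} of the inclusion $\mathcal{E} \otimes \C \subseteq \mathcal{E} \otimes \cA$. As $\mathcal{E}$ is unital, simple, and has Wassermann's property (S), Theorem~\ref{thm:tensorI} applies and produces a sub-\Cs{} $\mathcal{D}_0 \subseteq \cA$ with $\cD = \mathcal{E} \otimes \mathcal{D}_0$. This is the decisive step: it upgrades an a priori arbitrary intermediate algebra into a genuine tensor product, and it is the only place where property (S) is used.

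Next I would check that $\cB \subseteq \mathcal{D}_0 \subseteq \cA$, so that $\mathcal{D}_0$ is an intermediate algebra for the given $C^*$-irreducible inclusion. The inclusion $\mathcal{D}_0 \subseteq \cA$ is immediate from the conclusion of Theorem~\ref{thm:tensorI}. For $\cB \subseteq \mathcal{D}_0$, I would pick any state $\varphi$ on $\mathcal{E}$ and use the slice map $R_\varphi \colon \mathcal{E} \otimes \cA \to \cA$: for each $b \in \cB$ we have $1_\mathcal{E} \otimes b \in \mathcal{E} \otimes \cB \subseteq \cD = \mathcal{E} \otimes \mathcal{D}_0$, and since $R_\varphi$ maps $\mathcal{E} \otimes \mathcal{D}_0$ into $\mathcal{D}_0$ with $R_\varphi(1_\mathcal{E} \otimes b) = \varphi(1_\mathcal{E})\,b = b$, we conclude $b \in \mathcal{D}_0$.

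Finally, since $\cB \subseteq \mathcal{D}_0 \subseteq \cA$ and $\cB \subseteq \cA$ is $C^*$-irreducible, the algebra $\mathcal{D}_0$ is simple; as $\mathcal{E}$ is simple as well, Takesaki's theorem gives that $\cD = \mathcal{E} \otimes \mathcal{D}_0$ is simple. Because $\cD$ was arbitrary, this shows that $\mathcal{E} \otimes \cB \subseteq \mathcal{E} \otimes \cA$ is $C^*$-irreducible. I expect no serious obstacle here: the entire weight of the argument rests on Theorem~\ref{thm:tensorI}, and the remaining steps are routine verifications, the most delicate being the slice-map identification ensuring that $\cB$ sits inside $\mathcal{D}_0$.
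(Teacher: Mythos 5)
Your proof is correct and follows essentially the same route as the paper's own argument: view the intermediate algebra $\cD$ as sitting between $\mathcal{E} \otimes \C$ and $\mathcal{E} \otimes \cA$, apply the Zacharias--Zsido theorem to write $\cD = \mathcal{E} \otimes \mathcal{D}_0$, observe $\cB \subseteq \mathcal{D}_0 \subseteq \cA$, invoke $C^*$-irreducibility of $\cB \subseteq \cA$ to get simplicity of $\mathcal{D}_0$, and finish with Takesaki's theorem. Your slice-map verification that $\cB \subseteq \mathcal{D}_0$ is a detail the paper leaves implicit, and it is a correct way to fill it in.
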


\noindent Recall that all nuclear \Cs s have property (S). The Zacharias--Zsido theorem further says that the map $\cD \mapsto \mathcal{E} \otimes \cD$ gives a bijection between the set of intermediate \Cs s of the inclusion $\cB \subseteq \cA$  and the set of intermediate \Cs s of the inclusion $\mathcal{E} \otimes \cB  \subseteq \mathcal{E} \otimes \cA$.

It follows in particular that whenever you have a $C^*$-irreducible inclusion,  one can arrange to make it $\cZ$-stable, $\cO_\infty$-stable, $\cO_2$-stable etc upon tensoring by $\cZ$, $\cO_\infty$, and $\cO_2$, respectively.

\begin{proposition} \label{prop:tensorIII} 
Let $\cB_i \subseteq \cA_i$, $i \in I$, be a (finite or infinite) family of unital inclusions of \Cs s. Set $\cB = \bigotimes_{i \in I} \cB_i$ and $\cA= \bigotimes_{i \in I} \cA_i$.
\begin{enumerate}
\item If each $\cB_i \subseteq \cA_i$ has the relative Dixmier property with respect to some faithful tracial state $\tau_i$ on $\cA_i$, then $\cB \subseteq \cA$ also has the relative Dixmier property with respect to the faithful tracial state $\tau=\otimes_{i \in I} \, \tau_i$. In particular, the inclusion $\cB \subseteq \cA$ is $C^*$-irreducible.
\item If each $\cB_i \subseteq \cA_i$ has the relative excision property with respect to a faithful state $\psi_i$ on $\cA_i$, for each $i \in I$, then $\cB \subseteq \cA$ has the excision property relatively to the faithful state $\psi = \otimes_{i \in I} \, \psi_i$ on $\cA$. In particular, if each $\cB_i$ is simple, then the inclusion $\cB \subseteq \cA$ is $C^*$-irreducible.
\end{enumerate}
\end{proposition}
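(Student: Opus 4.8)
The plan is to treat both parts by the same reduction scheme: the defining condition is checked first on \emph{elementary tensors} $a = \bigotimes_i a_i$ (with $a_i = 1_{\cA_i}$ for all but finitely many $i$), then extended to the algebraic tensor product by linearity, and finally to all of $\cA$ by a density and uniform-boundedness argument. The two ``in particular'' clauses then follow at once: in (i) from Proposition~\ref{prop:RDP-a}, once we know $\tau = \bigotimes_i \tau_i$ is a faithful tracial state on $\cA$; in (ii) from Proposition~\ref{prop:M}, once we know $\psi = \bigotimes_i \psi_i$ is faithful and, by Takesaki's theorem, $\cB = \bigotimes_i \cB_i$ is simple whenever each $\cB_i$ is.

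For part (i), recall from the discussion after Definition~\ref{def:RDP} that, since $\cA$ carries the tracial state $\tau$, the relative Dixmier property amounts to the statement that $\tau(a)\cdot 1_\cA$ lies in $C_\cB(a)$ for every $a \in \cA$; equivalently, for each $\ep > 0$ there is an \emph{averaging operator} $\Phi(x) = \sum_k \lambda_k u_k^* x u_k$, with $u_k \in \cU(\cB)$, $\lambda_k \ge 0$, $\sum_k \lambda_k = 1$, such that $\|\Phi(a) - \tau(a) 1_\cA\| < \ep$. First I would verify this for an elementary tensor $a = a_1 \otimes \cdots \otimes a_N \otimes 1 \otimes \cdots$: applying in succession the averaging operators coming from $\cB_1, \dots, \cB_N$ (viewed inside $\cB$ via the canonical embeddings, so that they commute) and using that each $\cB_i \subseteq \cA_i$ brings $a_i$ within a prescribed tolerance of $\tau_i(a_i)\, 1_{\cA_i}$, one reaches an element within $\ep$ of $\prod_i \tau_i(a_i)\cdot 1_\cA = \tau(a) 1_\cA$. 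To pass to a finite linear combination $a = \sum_j c_j w_j$ of elementary tensors I would use that averaging operators are unital completely positive contractions fixing $\C\cdot 1_\cA$ and, because $\tau$ is \emph{tracial}, satisfying $\tau \circ \Phi = \tau$; composing $\Phi_r \circ \cdots \circ \Phi_1$, where $\Phi_j$ treats $w_j$ while the contraction property keeps the already-averaged terms near their scalar limits, yields a single averaging operator sending $a$ close to $\tau(a) 1_\cA$. Density of the algebraic tensor product and contractivity of averaging operators then cover all $a \in \cA$; in the infinite case each $a$ is approximated by a finitely supported element, so only finitely many factors are ever averaged.

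For part (ii), given excising nets $\{h^{(i)}_\alpha\}$ for each $\cB_i \subseteq \cA_i$, I would form, for a finite index set $F \subseteq I$, the positive contraction $h = \big(\bigotimes_{i \in F} h^{(i)}_{\alpha_i}\big) \otimes \big(\bigotimes_{i \notin F} 1_{\cB_i}\big) \in \cB$. On an elementary tensor $a = \bigotimes_i a_i$ supported in $F$ the estimate reduces to controlling $\big\|\bigotimes_{i \in F} b_i - \bigotimes_{i \in F} c_i\big\|$, where $b_i = (h^{(i)}_{\alpha_i})^{1/2} a_i (h^{(i)}_{\alpha_i})^{1/2}$ and $c_i = \psi_i(a_i) h^{(i)}_{\alpha_i}$. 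Fixing an order on $F$, the telescoping identity
$$\bigotimes_{i \in F} b_i - \bigotimes_{i \in F} c_i = \sum_{k \in F} \Big(\bigotimes_{i < k} c_i\Big) \otimes (b_k - c_k) \otimes \Big(\bigotimes_{i > k} b_i\Big),$$
together with the bounds $\|b_i\|, \|c_i\| \le \|a_i\|$ and $\|b_k - c_k\| \to 0$ (the excision property of the $k$-th factor), makes this norm tend to $0$. Linearity in $a$ extends the estimate to the algebraic tensor product, and an $\ep/3$-approximation extends it to all of $\cA$, using $\|h\|=1$ to keep $\|h^{1/2} a h^{1/2}\|$ and $\|\psi(a)h\|$ bounded by $\|a\|$. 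Finally I would assemble the net: index it by pairs $(\mathcal{F}, \ep)$ with $\mathcal{F} \subseteq \cA$ finite and $\ep > 0$, directed by inclusion of $\mathcal{F}$ and reverse order of $\ep$, and for each such pair choose, via the finite-tensor estimate just proved, a single $h_{(\mathcal{F},\ep)} \in \cB$ excising all members of $\mathcal{F}$ to within $\ep$.

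I expect the main obstacle to be the passage to \emph{infinite} tensor products, on two counts. First, one must know that $\tau = \bigotimes_i \tau_i$ (resp.\ $\psi = \bigotimes_i \psi_i$) is a well-defined \emph{faithful} state on the minimal tensor product $\cA$; faithfulness of an infinite tensor product of faithful states is true but is the one genuinely non-formal input, and it is exactly what the ``in particular'' clauses rest on. Second, the excising net of part (ii) must be organized so that at every stage only finitely many factors are perturbed—this is why I index by finite subsets of $\cA$ and exploit that finitely supported elementary tensors are dense, so the delicate telescoping estimate is only ever invoked for finitely many factors at once. Once these points are handled, the remaining steps are the routine averaging and telescoping computations sketched above.
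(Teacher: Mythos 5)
Your proposal is correct and follows essentially the same route as the paper's proof: both parts are verified first on elementary tensors (via tensor products of averaging operators from the individual $\cB_i$, resp.\ elementary tensors of excising elements supported on a finite subset of $I$, padded by units), then on their span (using that such operators preserve elementary tensors, are contractions, and preserve $\tau$, resp.\ a telescoping estimate), and finally on all of $\cA$ by density, with the $C^*$-irreducibility claims deduced from Propositions~\ref{prop:RDP-a} and~\ref{prop:M} exactly as you indicate. The only differences are cosmetic: you spell out the telescoping estimate and the trace-invariance point that the paper leaves implicit, and you explicitly flag the faithfulness of the (infinite) product state, which the paper takes for granted.
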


\begin{proof} Denote by $\boxdot_{i \in I} \cB_i$ and  $\boxdot_{i \in I} \cA_i$ the set of elementary tensors of the form $\otimes_{i \in I} c_i$ with $c_i \in \cB_i$, respectively, $c_i \in \cA_i$, and with $c_i \ne 1$ for only finitely many $i \in I$. Note that $\cA$ is the closure of the span of $\boxdot_{i \in I} \cA_i$, and similarly for $\cB$.

 (i). For each $i \in I$, set $\mathcal{E}_i = \mathrm{conv} \{ {\mathrm{Ad}}_u : u \in \cU(\cB_i)\}$ and let $\mathcal{E}$ be the set of self-maps on $\cA$ of the form $E = \otimes_{i \in I} E_i$, where $E_i \in \mathcal{E}_i$ and $E_i \ne \mathrm{id}_{\cA_i}$ for at most finitely many $i \in I$. Let $X$ be the set of elements $a \in \cA$ for which the relative Dixmier property holds in the following sense: for each $\ep >0$ there exists $E \in \mathcal{E}$ such that $\|E(a)-\tau(a) \cdot 1\| < \ep$. 
 
 The assumption that each inclusion $\cB_i \subseteq \cA_i$ has the relative Dixmier property implies that $\boxdot_{i \in I} \cA_i \subseteq X$. Secondly, since each $E \in \mathcal{E}$ maps $\boxdot_{i \in I} \cA_i$ into itself, we conclude that also the span of $\boxdot_{i \in I} \cA_i$ is contained in $X$. Finally, as $X$ is closed, it follows that $X = \cA$.
 
 (ii). By assumption we can, for each $i \in I$, find nets $\{h(\alpha,i)\}_\alpha$ (indexed over the same upwards directed set) of positive elements in $\cB_i$ with $\|h(\alpha,i)\|=1$ such that 
 $$\lim_\alpha \|h(\alpha,i)^{1/2}ah(\alpha,i)^{1/2}-\psi_i(a) \, h(\alpha,i)\| = 0, \qquad a \in \cA_i.$$
 For each finite subset $I_0$ of $I$ and each $\alpha$ set
 $h(\alpha,I_0) = \otimes_{i \in I} \, \tilde{h}(\alpha,i)$, where $\tilde{h}(\alpha,i)= h(\alpha,i)$, when $i \in I_0$ and where $\tilde{h}(\alpha,i)=1_{\cB_i}$, when $i \notin I_0$. Then $\{h(\alpha,I_0)\}_{(\alpha, I_0)}$ is a net of positive elements in $\cB$ with $\|h(\alpha,I_0)\|=1$ and
 $$\lim_{(\alpha,I_0)} \|h(\alpha,I_0)^{1/2}ah(\alpha,I_0)^{1/2}-\psi(a) \, h(\alpha,I_0)\| = 0, \qquad a \in \cA,$$
 thus proving that $\psi$ can be excised relatively to $\cB$.
 
 The claims about $C^*$-irreducibility in both (i) and (ii) follows from Proposition~\ref{prop:M}.
\end{proof}

\noindent The lattice of intermediate sub-\Cs s  between $\bigotimes_{i \in I} \cB_i \subseteq \bigotimes_{i \in I} \cA_i$ contains all \Cs s of the form $\bigotimes_{i \in I} {\mathcal{D}_i}$, with $\cB_i \subseteq \mathcal{D}_i \subseteq \cA_i$. However, not all intermediate sub-\Cs s are of this form. If $I = \{1,2\}$ and $\cB_i \subset \cA_i$ are strict unital inclusions of \Cs s, then, for example, $C^*(\cB_1 \otimes \cA_2, \cA_1 \otimes \cB_2)$ is a proper intermediate \Cs, which is not a tensor product of intermediate \Cs s. A possible generalization of the tensor splitting theorem to this situation could be that each intermediate \Cs{} of the inclusion $\bigotimes_{i \in I} \cB_i \subseteq \bigotimes_{i \in I} \cA_i$ is generated by \Cs s of the form $\bigotimes_{i \in I} {\mathcal{D}_i}$, with $\cB_i \subseteq \mathcal{D}_i \subseteq \cA_i$.

\begin{question}\label{q:tensor}
 Let $\cB_i \subseteq \cA_i$, $i \in I$, be a (finite or infinite) family of $C^*$-irreducible unital inclusions of \Cs s. Is it true that $\bigotimes_{i \in I} \cB_i \subseteq \bigotimes_{i \in I} \cA_i$ is also $C^*$-irreducible?
\end{question}

\noindent Proposition~\ref{prop:tensorIII} answers this question in the affirmative under somewhat stronger conditions on the inclusions $\cB_i \subseteq \cA_i$. In particular, if $\cN_i \subseteq \cM_i$, $i \in I$,  are (proper) inclusions of II$_1$-factors with $[\cM_i : \cN_i] < \infty$, for all $i \in I$, then the inclusion $\bigotimes_{i \in I} \cN_i \subseteq \bigotimes_{i \in I} \cM_i$ is  $C^*$-irreducible, cf.\ Theorem~\ref{thm:vNirr}, 
 (where the tensor products are the minimal $C^*$-tensor product). If $I$ is finite, then the inclusion 
${\overline{\bigotimes}}_{i \in I} \cN_i \subseteq {\overline{\bigotimes}}_{i \in I} \cM_i$ 
of II$_1$-factors is also $C^*$-irreducible, while this fails when $I$ is infinite, because the inclusion has infinite Jones index.

Note also that it suffices to answer Question~\ref{q:tensor} when $|I| = 2$. An easy induction argument will namely then settle the cases where the index set is finite, and the general case will finally follow from Proposition~\ref{prop:indlimit}.

 \newpage
{\small{
\bibliographystyle{amsplain}
%\bibliography{operator} %undelete to use operator.bib file instead

%delete lines below to use operator.bib file
\providecommand{\bysame}{\leavevmode\hbox to3em{\hrulefill}\thinspace}
\providecommand{\MR}{\relax\ifhmode\unskip\space\fi MR }
% \MRhref is called by the amsart/book/proc definition of \MR.
\providecommand{\MRhref}[2]{%
  \href{http://www.ams.org/mathscinet-getitem?mr=#1}{#2}
}
\providecommand{\href}[2]{#2}

%undelete to use operator.bib file

%
%
%\providecommand{\bysame}{\leavevmode\hbox to3em{\hrulefill}\thinspace}
%\providecommand{\MR}{\relax\ifhmode\unskip\space\fi MR }
%% \MRhref is called by the amsart/book/proc definition of \MR.
%\providecommand{\MRhref}[2]{%
%  \href{http://www.ams.org/mathscinet-getitem?mr=#1}{#2}
}}

\vspace{1cm}

\noindent
Mikael R\o rdam \\
Department of Mathematical Sciences \\
University of Copenhagen \\ 
Universitetsparken 5, DK-2100, Copenhagen \O \\
Denmark \\
Email: rordam@math.ku.dk\\
WWW: http://web.math.ku.dk/$\sim$rordam/
%
%\newpage
%
%\noindent Let $k \ge 2$ and $m \ge 1$ be integers. Is it true that
%$$\lim_{n \to \infty} \int_{(u_1, \dots, u_n) \in \cU(\C^k)^n} \big(u_1  \otimes \cdots \otimes u_{n} \otimes \underbrace{1_k \otimes \cdots \otimes 1_k}_m \big) \big(\underbrace{1_k \otimes \cdots \otimes 1_k}_m \otimes u_1 \otimes \cdots \otimes u_n\big)^* \, d(u_1, \dots, u_n) = 0,$$
%where the integral is wrt the normalized Haar measure on $\cU(\C^k)^n$?
%
%\bigskip \noindent
%For $m=1$ this reads:
%$$
%\int_{(u_1, \dots, u_n) \in \cU(\C^k)^n} \big(u_1  \otimes u_2u_1^* \otimes u_3u_2^* \otimes \cdots \otimes u_nu_{n-1}^* \otimes u_n^*\big) \, d(u_1, \dots, u_n) = 0.$$
%
%
%\bigskip \noindent Change of variables:
%$$
%\int_{(v_1, \dots, v_n) \in \cU(\C^k)^n} \big(v_1  \otimes v_2  \otimes \cdots \otimes v_n \otimes (v_nv_{n-1} \cdots v_1)^*\big) \, d(v_1, \dots, v_n) = 0.$$
%
%

\end{document}